\journal{Journal of Multivariate Analysis}
\theoremstyle{plain}
\newtheorem{theorem}{Theorem}
\newtheorem{proposition}{Proposition}
\newtheorem{lemma}{Lemma}
\newtheorem{corollary}{Corollary}
\theoremstyle{definition}
\newtheorem{remark}{Remark}
\newcommand{\N}{\mathbb{N}}
\newcommand{\R}{\mathbb{R}}
\newcommand{\PP}{\mathbb{P}}
\newcommand{\EE}{\mathbb{E}}
\newcommand{\BB}{\mathbb{B}\mathrm{ias}}
\newcommand{\VV}{\mathbb{V}\mathrm{ar}}
\newcommand{\bb}[1]{\boldsymbol{#1}}
\newcommand{\OO}{\mathcal{O}}
\newcommand{\oo}{\mathrm{o}}
\newcommand{\leqdef}{\vcentcolon=}
\newcommand{\rd}{{\rm d}}
\newcommand{\ind}{\mathds{1}}
\newcommand{\e}{\varepsilon}
\begin{document}

\begin{frontmatter}

\title{Asymptotic properties of Bernstein estimators on the simplex}%

\author[a1]{Fr\'ed\'eric Ouimet\texorpdfstring{\corref{cor1}\fnref{fn1}}{)}}%

\address[a1]{California Institute of Technology, Pasadena, USA.}%

\cortext[cor1]{Corresponding author. Email address: \url{ouimetfr@caltech.edu}}

\begin{abstract}
    Bernstein estimators are well-known to avoid the boundary bias problem of traditional kernel estimators.
    The theoretical properties of these estimators have been studied extensively on compact intervals and hypercubes, but never on the simplex, except for the mean squared error of the density estimator in \cite{MR1293514} when $d = 2$.
    The simplex is an important case as it is the natural domain of compositional data.
    In this paper, we make an effort to prove several asymptotic results (bias, variance, mean squared error (MSE), mean integrated squared error (MISE), asymptotic normality, uniform strong consistency) for Bernstein estimators of cumulative distribution functions  and density functions on the $d$-dimensional simplex.
    Our results generalize the ones in \cite{MR2960952} and \cite{MR1910059}, who treated the case $d = 1$, and significantly extend those found in \cite{MR1293514}.
    In particular, our rates of convergence for the MSE and MISE are optimal.
\end{abstract}

\begin{keyword} 
    asymptotic normality \sep Bernstein estimators \sep compositional data \sep cumulative distribution function estimation \sep density estimation \sep mean squared error \sep simplex \sep uniform strong consistency
    \MSC[2020]{Primary 62G05 \sep Secondary 62G07, 62G20, 60F05}
\end{keyword}

\end{frontmatter}

\section{The models}\label{sec:models}

The $d$-dimensional (unit) simplex and its interior are defined by
\begin{equation}\label{eq:def.simplex}
    \mathcal{S} \leqdef \big\{\bb{x}\in [0,1]^d: \|\bb{x}\|_1 \leq 1\big\}, \qquad \mathrm{Int}(\mathcal{S}) \leqdef \big\{\bb{x}\in (0,1)^d: \|\bb{x}\|_1 < 1\big\},
\end{equation}
where $\|\bb{x}\|_1 \leqdef \sum_{i=1}^d |x_i|$.
For any (joint) cumulative distribution function $F$ on $\mathcal{S}$ (meaning that it takes the values $0$ or $1$ outside $\mathcal{S}$), define the Bernstein polynomial of order $m$ for $F$ by
\begin{equation}\label{eq:Bernstein.polynomial}
    F_m^{\star}(\bb{x}) \leqdef \sum_{\bb{k}\in \N_0^d \cap m \mathcal{S}} F(\bb{k}/m) P_{\bb{k},m}(\bb{x}), \quad \bb{x}\in \mathcal{S}, ~m \in \N,
\end{equation}
where the weights are the following probabilities from the $\mathrm{Multinomial}\hspace{0.2mm}(m,\bb{x})$ distribution:
\begin{equation}\label{eq:multinomial.probability}
    P_{\bb{k},m}(\bb{x}) \leqdef \frac{m!}{(m - \|\bb{k}\|_1)! \prod_{i=1}^d k_i!} \cdot (1 - \|\bb{x}\|_1)^{m - \|k\|_1} \prod_{i=1}^d x_i^{k_i}, \quad \bb{k}\in \N_0^d \cap m\mathcal{S}.
\end{equation}
The Bernstein estimator of $F$, denoted by $F_{n,m}^{\star}$, is the Bernstein polynomial of order $m$ for the empirical cumulative distribution function
\begin{equation}
    F_n(\bb{x}) \leqdef n^{-1} \sum_{i=1}^n \ind_{(-\bb{\infty},\bb{x}]}(\bb{X}_i),
\end{equation}
where the observations $\bb{X}_1, \dots, \bb{X}_n$ are assumed to be independent and $F$ distributed.
Precisely, let
\begin{equation}\label{eq:cdf.Bernstein.estimator}
    F_{n,m}^{\star}(\bb{x}) \leqdef \sum_{\bb{k}\in \N_0^d \cap m \mathcal{S}} F_n(\bb{k}/m) P_{\bb{k},m}(\bb{x}), \quad \bb{x}\in \mathcal{S}, ~m,n \in \N.
\end{equation}
It should be noted that the c.d.f.\ estimator in \eqref{eq:cdf.Bernstein.estimator} only makes sense here if the observations' support is contained in a hyperrectangle inside the unit simplex.
If the observations have full support on the unit simplex, then the relevant part of the c.d.f.\ estimator would be on the unit hypercube, in which case results analogous to those in Section~\ref{sec:results.cdf.estimator} can be found in \cite{MR2270097,MR3474765}, the difference being a product of binomial weights replacing the multinomial weight function in \eqref{eq:cdf.Bernstein.estimator}.

For a density $f$ supported on $\mathcal{S}$, we define the Bernstein density estimator of $f$ by
\begin{equation}\label{eq:histogram.estimator}
    \hat{f}_{n,m}(\bb{x}) \leqdef \hspace{-3mm}\sum_{\bb{k}\in \N_0^d \cap (m-1) \mathcal{S}} \frac{(m-1+d)!}{(m-1)!} \left\{\frac{1}{n} \sum_{i=1}^n \ind_{\left(\frac{\bb{k}}{m}, \frac{\bb{k} + 1}{m}\right]}(\bb{X}_i)\right\} P_{\bb{k},m-1}(\bb{x}), \quad \bb{x}\in \mathcal{S}, ~m,n \in \N,
\end{equation}
where $(m-1+d)! / (m-1)!$ is just a scaling factor proportional to the inverse of the volume of the hypercube $(\bb{k} / m, (\bb{k} + 1) / m] \leqdef (k_1 / m, (k_1 + 1) / m] \times \dots \times (k_d / m, (k_d + 1) / m]$.
The reader can easily verify that $\hat{f}_{n,m}$ is a proper density function using the identity
\begin{equation}\label{eq:dirichlet.identity}
    \int_{\mathcal{S}} (1 - \|\bb{x}\|_1)^b \prod_{i=1}^d x_i^{a_i} \rd \bb{x} = \frac{b! \prod_{i=1}^d a_i!}{(b + \sum_{i=1}^d a_i + d)!}, \quad a_i,b\in \N_0.
\end{equation}
If we replace the factor $(m-1+d)! / (m-1)!$ by $m^d$ for mathematical convenience in \eqref{eq:histogram.estimator}, then $\hat{f}_{n,m}$ would still be asymptotically a density. The asymptotic results proved in this paper are almost the same under both definitions of the density estimator: the factor $d (d - 1) f(\bb{x}) / 2$ in \eqref{eq:def:b.x} disappears if we replace $(m-1+d)! / (m-1)!$ by $m^d$ in \eqref{eq:histogram.estimator}.

\begin{remark}
    An alternative way to write the Bernstein density estimator \eqref{eq:histogram.estimator} is as a specific finite mixture of Dirichlet densities:
    \begin{equation}
        \hat{f}_{n,m}(\bb{x}) = \hspace{-3mm}\sum_{\bb{k}\in \N_0^d \cap (m-1) \mathcal{S}} \left\{\frac{1}{n} \sum_{i=1}^n \ind_{\left(\frac{\bb{k}}{m}, \frac{\bb{k} + 1}{m}\right]}(\bb{X}_i)\right\} D(\bb{k}+1,m-\|\bb{k}\|_1)(\bb{x}),
    \end{equation}
    where the value of the density of the $\mathrm{Dirichlet}(\bb{\alpha},\beta)$ distribution at $\bb{x}\in \mathcal{S}$ is
    \begin{equation}
        D(\bb{\alpha},\beta)(\bb{x}) \leqdef \frac{(\beta + \|\bb{\alpha}\|_1 - 1)!}{(\beta - 1)! \prod_{i=1}^d (\alpha_i - 1)!} \cdot (1 - \|\bb{x}\|_1)^{\beta - 1} \prod_{i=1}^d x_i^{\alpha_i - 1}, \quad \alpha_i, \beta > 0.
    \end{equation}
    (The Dirichlet density is illustrated in Fig.~\ref{fig:dirichlet.densities}.)
    This means that the density estimator $\hat{f}_{n,m}$ is part of the larger class of finite Dirichlet mixtures, where more liberty could be placed on the choice of the weights (i.e., the braces for each summand), in the same way that finite beta mixtures, studied for example in \cite{MR2227397,MR2589319,MR3120829,MR3139345,MR3488598,MR3740722,MR3916632}, are generalizations of one-dimensional Bernstein density estimators.
    We leave this point for future research.
    For further information on finite mixture models, we refer the reader to \citet{MR1789474}.
\end{remark}

    \begin{figure}[ht]
        \captionsetup[subfigure]{labelformat=empty}
        \captionsetup{width=0.8\linewidth}
        \centering
        \begin{subfigure}[b]{0.34\textwidth}
            \centering
            \includegraphics[width=\textwidth]{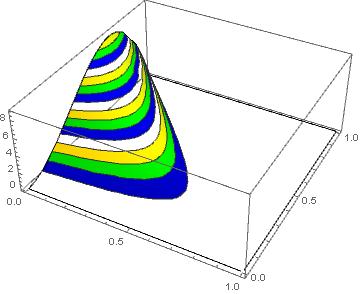}
            \caption{$\alpha_1 = 1$\vspace{3mm}}
            \label{fig:dirichlet.1}
        \end{subfigure}
        \quad
        \begin{subfigure}[b]{0.34\textwidth}
            \centering
            \includegraphics[width=\textwidth]{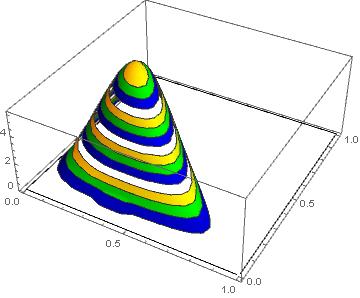}
            \caption{$\alpha_1 = 2$\vspace{3mm}}
            \label{fig:dirichlet.2}
        \end{subfigure}
        \vspace{3mm}
        \begin{subfigure}[b]{0.34\textwidth}
            \centering
            \includegraphics[width=\textwidth]{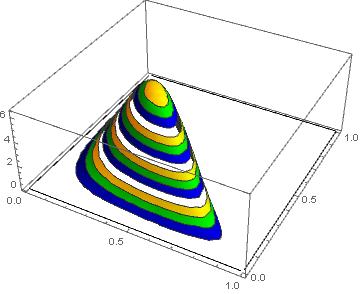}
            \caption{$\alpha_1 = 3$}
            \label{fig:dirichlet.3}
        \end{subfigure}
        \quad
        \begin{subfigure}[b]{0.34\textwidth}
            \centering
            \includegraphics[width=\textwidth]{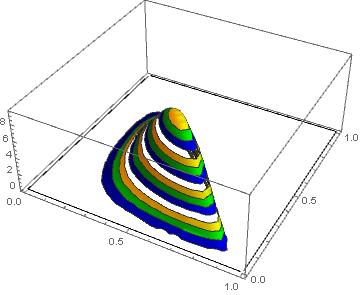}
            \caption{$\alpha_1 = 5$}
            \label{fig:dirichlet.4}
        \end{subfigure}
        \caption{The $\mathrm{Dirichlet}\hspace{0.3mm}(\bb{\alpha} = (\alpha_1,3), \beta = 2)$ density on the two-dimensional simplex, for $\alpha_1 = 1,2,3,5$.}
        \label{fig:dirichlet.densities}
    \end{figure}

\section{Overview of the literature}\label{sec:overview.literature}

    Below, we give a systematic overview of the main line of articles on Bernstein estimation, and then we briefly mention several other branches in which it appeared.
    There might be more details than the reader expects, but this is because the subject is vast and relatively important references are often disjointed or missing in the literature, which makes it hard for newcomers to get a complete chronological account of the progress in the field.

    \citet{MR0397977} was the first to consider Bernstein density estimation on the compact interval $[0,1]$, namely \eqref{eq:histogram.estimator} with $d = 1$.
    In his paper, he computes the asymptotics of the bias, variance and mean squared error (MSE) at each point where the second derivative of $f$ exists (also assuming that $f$ is bounded everywhere). His proof rests on careful Taylor expansions for the density points inside the bulk of the binomial distribution (after rescaling to $[0,1]$) while concentration bounds are applied to show that the contributions coming from outside the bulk are negligible.
    The optimal rate for the MSE is achieved when $m \asymp n^{2/5}$ and shown to be $\OO_x(n^{-4/5})$ for $x\in (0,1)$, and $\OO_x(n^{-3/5})$ at the boundary (where $m \asymp n^{2/5}$ is suboptimal).
    \citet{MR0574548} then considered a density estimator (also called smoothed histogram) on the interval $[0,\infty)$ with Poisson weights instead of binomial weights.
    Assuming that $f$ is in $C^2[0,1]$, the asymptotics of the MSE are derived pointwise and uniformly on compacts, and the optimal rate is again shown to be $\OO_x(n^{-4/5})$ when $m \asymp n^{2/5}$.
    The proof follows the same line as in \cite{MR0397977}, with a slight difference for the concentration bound because the weights are different.
    This latter idea was pushed by \citet{MR0638651}; the results of \citet{MR0397977} were generalized to intervals $I$ of the form $[0,1]$, $[0,\infty)$ and $(-\infty,\infty)$, where the weights of the density estimator are coming from general $n$-fold convolutions (with minor assumptions on the first, second and third moment) instead of binomial weights (Bernoulli convolutions). When the weights are discrete and $f$ is two-times continuously differentiable on $I$, the authors computed the asymptotics of the MSE pointwise and uniformly on compacts using a general local limit theorem for lattice distributions (Edgeworth expansion) instead of the more specific normal approximation to the binomial.
    Following the previous article, \citet{MR0726014} improved tremendously on the precision of the asymptotics of the density estimator by proving a uniform weak law of large numbers, a pointwise central limit theorem (CLT) and a Berry-Esseen type bound.
    The author even shows that the recentered maximum converges in law to a Gumbel distribution.
    For the proof of the uniform weak consistency, the Hadamard product technique allows a separation of the stochastic and non-stochastic part of the density estimator.
    The proof of the CLT is a consequence of the Lindeberg condition for double arrays and the Berry-Esseen bound follows standard arguments for sums of independent random variables.
    In \cite{MR0791719}, various rates (at the level of precision of the law of iterated logarithm) for pointwise and uniform strong consistency were obtained for the density estimator and its derivatives, on intervals of the form $[0,1]$, $[0,\infty)$, $(-\infty,\infty)$.
    The main idea of the paper was to approximate the density estimator and its derivatives by classical kernel estimators, and then to deduce the various strong laws using already established results.

    \citet{MR1293514} was the first to consider Bernstein estimation in the multivariate context.
    Assuming that $f$ is two-times continuously differentiable, he derived asymptotic expressions for the bias, variance and MSE of the density estimators on the two-dimensional unit simplex and the unit square $[0,1]^2$, and also proved their uniform strong consistency and asymptotic normality. (Technically, Tenbusch calls it uniform weak consistency, but his bounds on the deviation probabilities are summable in $n$, so the uniform strong consistency is a trivial consequence of the Borel-Cantelli lemma. The same argument was made in Remark~2 of \cite{MR1985506} for the beta kernel estimator.) He showed that the optimal rate for the MSE is $\OO_{\bb{x}}(n^{-2/3})$ when $d = 2$ and it is achieved when $m \asymp n^{1/3}$ (the optimal rates are also calculated at the boundary).

    \citet{MR1910059} were the first to consider Bernstein estimators for cumulative distribution functions (c.d.f.).
    They complemented the work of Vitale by proving the uniform strong consistency of the c.d.f.\ estimator, and the asymptotic normality and uniform strong consistency of the density estimator on the interval $[0,1]$, under the weaker assumption that $f$ is Lipschitz continuous.
    For the c.d.f.\ estimator, the uniform strong consistency is, a priori, a simple consequence of the Glivenko-Cantelli theorem and the uniform strong consistency of Bernstein polynomials (see, e.g., Theorem~1.1.1 in \cite{MR0864976}), but the authors give explicit rates of convergence, which requires a more careful analysis.
    Their proof follows from a union bound over a partition of small intervals and a concentration bound is applied inside each subinterval. The partition is chosen so that the concentration bounds are summable and the uniform strong consistency follows from the Borel-Cantelli lemma.
    In the case of the density estimator, the proof of the uniform strong consistency is similar to \cite[Theorem~2]{MR1293514}, although we assume that the authors were unaware of this reference at the time of writing. The asymptotic normality follows from a verification of the Lindeberg condition for double arrays.
    Albeit not stated explicitly as theorems, asymptotic expressions for the bias and variance of the density estimator can also be found in the proofs.

    \citet{MR2068610} studied three modified Bernstein density estimators where the empirical density (or empirical histogram function) is replaced by boundary kernel density estimators. Asymptotic expressions are derived for the bias, variance and MISE.
    The estimators are shown to be superior in terms of MISE than Vitale's density estimator and ``equivalent'' to Chen's boundary Beta kernel estimator \cite{MR1718494}.

    \citet{MR2179543} considered three asymmetric kernel density estimators from \cite{MR1794247} (Gamma) and \cite{MR2053071} (Inverse Gaussian and Reciprocal Inverse Gaussian), and the smoothed histogram with Poisson weights introduced by \citet{MR0574548}.
    They showed the uniform weak consistency on compact subsets of $[0,\infty)$ of the estimators as well as the weak consistency of the $L^1$ norm and the pointwise weak consistency at $0$ when the density is unbounded.
    Under several technical conditions, the pointwise weak consistency of the ratio of the estimator to the target density is also shown.
    An application of the smoothers to income data is also performed.

    \citet{MR2270097} generalized the results in \cite{MR1910059} in two ways.
    They considered observations supported on the $d$-dimensional hypercube (technically, the proofs are written for the case $d = 2$)
    and instead of assuming the data to be independent, they controlled the dependence through the notion of strong mixing.
    Precisely, under $\alpha$-mixing of the observations, the authors showed the uniform strong consistency of the c.d.f.\ estimator, and the asymptotic normality and uniform strong consistency of the density estimator.
    The proofs follow the structure in \cite{MR1910059}, but the biggest innovation is the use of a concentration bound for strictly stationnary $\alpha$-mixing processes, coming from the work in \cite{MR0520961}.
    Albeit not stated explicitly as theorems, asymptotic expressions for the bias and variance of the estimators can also be found in the proofs.

    \citet{MR2351744} considered Bernstein density estimation on $[0,1]$ where the underlying density is continuous on $(0,1)$ but unbounded at $0$.
    The uniform strong consistency of the estimator is shown on every compact subset of $(0,1)$ (assuming an appropriate choice of the bandwidth parameter), and the almost-sure convergence is shown at $0$. To our knowledge, they were the first to apply the least-square-cross-validation (LSCV) method for the selection of the bandwidth parameter in the context of Bernstein estimation. Shortly after, \citet{MR2395599} proved $L^1$ bounds for the smoothed histogram on $[0,\infty)$ (i.e., with Poisson weights instead of binomial weights, see \citet{MR0574548}) following closely the previous work in \cite{MR1985506} for the beta kernel estimator.

    \citet{MR2488150} studied the law of iterated logarithm for the supremum of the recentered c.d.f.\ estimator and a slightly weaker notion called the Chung-Smirnov property.
    His results were more precise and more varied with respect to the regularity assumptions on $f$ and the choice of the parameters $m$ and $n$ then the uniform strong consistency result shown in \cite{MR1910059}.
    We should mention that similar uniform strong consistency results at this level of precision were previously obtained in \cite{MR0791719} for the density estimator and its derivatives.
    Assuming that $f$ is four-times continuously differentiable, \citet{MR2662607} studied the asymptotic properties of a modified density estimator which can be written as a weighed difference of Bernstein density estimators on $[0,1]$ (this is known as additive bias correction/reduction).
    He computed the bias and variance in general, and selected the weights in order to reduce the bias by a factor of $m^{-1}$ compared to the original estimator (the variance only increases by a multiplicative constant).
    With the optimal weights, the bias-corrected estimator is shown to achieve the optimal MISE rate of $\OO(n^{-8/9})$.
    In \cite{MR2960952}, Leblanc studied the asymptotic properties of the c.d.f.\ estimator.
    The results complement those of \citet{MR1910059}.
    Asymptotic expressions for the bias, variance, MSE and MISE were derived (see Remark~\ref{eq:correction.Leblanc.Belalia} for the correction of some statements), and the asymptotically optimal bandwidth parameter $m$ is also computed in terms of the sample size $n$, for both the MSE and MISE.
    Leblanc also found interesting expressions for the local (based on MSE) and global (based on MISE) asymptotic deficiency between the empirical c.d.f.\ and the Bernstein c.d.f.\ estimator.
    Finally, the asymptotic normality follows from a verification of the Lindeberg condition for double arrays.
    The paper \cite{MR2925964} is a follow-up to \cite{MR2960952}.
    Assuming that $f$ is two-times continuously differentiable, Leblanc computed the bias, variance and MSE for the c.d.f.\ and density estimators near the boundary of $[0,1]$ (more specifically, for the points $\lambda / m$ and $1 - \lambda / m$ as $m\to \infty$, where $\lambda \geq 0$ is fixed).
    The limiting expressions were given in terms of Bessel functions of the first kind (of order $0$ and $1$ respectively).
    Instead of the usual normal approximation to the binomial, Poisson approximations were used in the proofs. (The Poisson approximation is well known to perform better near the boundary, see, e.g., \citet{MR56861}.) It was already known from \cite{MR0397977} that Bernstein density estimators have decreased bias and increased variance in the boundary region when assuming the shoulder condition $f'(0) = f'(1) = 0$, but Leblanc showed that the c.d.f.\ estimators have decreased bias and increased variance without assuming the shoulder condition.

    \citet{MR3174309} applied a multiplicative bias correction to Bernstein density estimators on $[0,1]$, which is a method of bias reduction originally developed for traditional kernel estimators in \cite{MR585714}.
    They computed the asymptotics of the bias, variance, MSE and MISE.
    The boundary properties of the MSE were also studied for the density estimators with multiplicative bias correction and additive bias correction (from \cite{MR2662607}).

    \citet{MR3412755} showed that the rate of convergence for the uniform norm of the recentered density estimator on $[0,1]$ can be improved (compared to the one in \cite{MR1910059}) if we restrict to a compact subinterval of $(0,1)$.
    According to the author, this implies that the estimator is not minimax under the supremum norm loss on $[0,1]$ if the density is bounded and twice continuously differentiable.
    It should be noted however that Bernstein density estimators are expected to be minimax under the $L^2$ loss as \cite[Theorem~1]{MR2775207} have shown that this was the case for the very similar beta kernel estimators.

    \citet{MR3474765} generalized the results on the bias, variance, MSE and asymptotic normality proved in \cite{MR2960952} to the c.d.f.\ estimator on the unit square $[0,1]^2$.
    (Some of the proofs rely on an incorrect estimate from \citet{MR2960952}, see Remark~\ref{eq:correction.Leblanc.Belalia}.)
    The asymptotic expressions for the general $d$-dimensional hypercube ($d \geq 1$) are also stated in Remark~2 of the paper, as the proof would be an easy adaptation of the one given for $d = 2$ according to the author.
    Belalia's results complemented those proved in \cite{MR2270097} in the same way that the results in \cite{MR2960952} complemented those in \cite{MR1910059}.
    \citet{MR3630225} introduced a two-stage Bernstein estimator for conditional distribution functions.
    The method consists in smoothing a first-stage Nadaraya–Watson or local linear estimator by constructing its Bernstein polynomial.
    Asymptotics of the bias, variance, MSE and MISE are found, and the asymptotic normality is also proved.
    \citet{MR3983257} recently introduced a conditional density estimator based on Bernstein polynomials.
    From this, new estimators for the conditional c.d.f.\ and conditional mean are derived.
    Asymptotics of the bias and variance are found, and the asymptotic normality is also proved.
    In \cite{doi:10.1080/03610926.2020.1734832}, the authors find the bias and variance of the Bernstein estimator for the joint bivariate distribution function when one variable is subject to right-censoring, and also prove the asymptotic normality and the uniform strong consistency.
    Their estimator can be seen as a smooth version of the estimator in \cite{MR1222607}, which was weighted by the jumps of a Kaplan-Meier estimator.

    Here are the five main branches attached to the main line of articles described above:
    \begin{itemize}\setlength\itemsep{0em}
        \item Bayesian estimation, see, e.g., \cite{MR1703623,MR1712051,MR1873330,MR1881846,MR3390138,MR3671772};
        \item Copulas, see, e.g.,  \cite{MR2061727,MR2364124,Bouezmarni_Rombouts_Taamouti_2009,MR2557614,MR2879763,MR3143795,MR3147339,MR3493523,MR3668546,MR3635017,Scheffer2015phd,MR3406207,MR3818458,doi:10.1080/03610918.2020.1859535,MR4085745};
        \item Nonparametric regression, see, e.g., \cite{MR0858109,MR1437794,Rafajlowicz_Skubalska-Rafajlowicz_1999,MR2176996,MR2459189,MR2782409,MR2853755,MR2915158};
        \item Finite beta mixtures, see, e.g., \cite{MR2227397,MR2589319,MR3120829,MR3139345,MR3488598,MR3740722,MR3916632};
        \item Quantile estimation, see, e.g., \cite{doi:10.1016/0167-9473(87)90061-2,MR1353890,MR1450503,MR1681452,MR1963007}.
    \end{itemize}

    Various other statistical topics related to Bernstein estimators are treated, for example, in
    \cite{MR0755576, 
    MR1381202, 
    MR2153833, 
    MR2235846, 
    MR2440398, 
    MR2750607, 
    MR2769276, 
    Schellhase2012phd, 
    MR3197796, 
    MR3292798, 
    MR3563528, 
    Tencaliec2017phd, 
    MR3740720, 
    arXiv:1911.07087, 
    doi:10.1080/03610918.2019.1699571, 
    MR3899096, 
    MR3899473, 
    MR3964528, 
    Hanebeck2020master, 
    doi:10.1080/03610926.2019.1709872, 
    doi:10.1007/s11009-020-09829-3, 
    Lyu2020master, 
    MR4130895, 
    doi:10.1007/s13163-021-00384-0, 
    MR4198443, 
    doi:10.1007/s10463-020-00783-y}. 

    It should be mentioned that beta kernels, introduced by \citet{doi:10.2307/2347365} (as a particular case of more general Dirichlet kernels on the simplex) and first studied theoretically in \cite{MR1685301,MR1718494,MR1742101,MR1985506}, are sort of continuous analogues of Bernstein density estimators.
    As such, they share many of the same asymptotic properties (with proper reparametrization) and the literature on beta kernels, and the more general class of asymmetric kernels, has parallelled that of Bernstein estimators in the past twenty years.
    For a recap of the vast literature on asymmetric kernels, we refer the reader to \citet{MR3821525}, or \citet{arXiv:2002.06956}, where Dirichlet kernels are studied theoretically for the first time.

\section{Contribution, outline and notation}\label{sec:contribution.and.outline}

    \subsection{Contribution}

    In this paper, our  contribution is to find asymptotic expressions for the bias, variance, MSE and MISE for the Bernstein c.d.f.\ and density estimators on the $d$-dimensional simplex, defined respectively in \eqref{eq:cdf.Bernstein.estimator} and \eqref{eq:histogram.estimator}, and also prove their asymptotic normality and uniform strong consistency.
    We deduce the asymptotically optimal bandwidth parameter $m$ using the expressions for the MSE and MISE as well.
    These theoretical results generalize the ones in \cite{MR2960952} and \cite{MR1910059}, who treated the case $d = 1$, and significantly extend those found in \cite{MR1293514}, who calculated the MSE for the density estimator when $d = 2$.
    In particular, our rates of convergence for the MSE and MISE are optimal, as they coincide (assuming the identification $m^{-1} \approx h^2$) with the rates of convergence for the MSE and MISE of traditional multivariate kernel estimators, studied for example in \cite{MR0740865}.
    In contrast to other methods of boundary bias reduction (such as the reflection boundary technique or boundary kernels (see, e.g., \cite[Chapter 6]{MR3329609}), this property is built-in for Bernstein estimators, which makes them one of the easiest to use in the class of estimators that are asymptotically unbiased near (and on) the boundary. Bernstein estimators are also non-negative everywhere on their domain, which is definitely not the case of many estimators corrected for boundary bias.
    This is another reason for their desirability.
    The boundary properties of the Bernstein c.d.f.\ and density estimators are studied in the companion paper \cite{arXiv:2006.11756}.
    Bandwidth selection methods will be investigated in future work.

    \subsection{Outline}

    In Section~\ref{sec:results.cdf.estimator} and Section~\ref{sec:results.density.estimator}, we state the theoretical results for the c.d.f.\ estimator and the density estimator, respectively.
    The proofs are given in Section~\ref{sec:proofs.results.cdf.estimator} and Section~\ref{sec:proofs.results.density.estimator}.
    Some technical lemmas and tools are gathered in Section~\ref{sec:tools}.

    \subsection{Notation}

    Throughout the paper, the notation $u = \OO(v)$ means that $\limsup |u / v| < C < \infty$ as $m$ or $n$ tends to infinity, depending on the context.
    The positive constant $C$ can depend on the target c.d.f.\ $F$, the target density $f$ or the dimension $d$, but no other variable unless explicitly written as a subscript. The most common occurrence is a local dependence of the asymptotics with a given point $\bb{x}$ on the simplex, in which case we would write $u = \OO_{\bb{x}}(v)$.
    In a similar fashion, the notation $u = \oo(v)$ means that $\lim |u / v| = 0$ as $m$ or $n$ tends to infinity.
    Subscripts indicate which parameters the convergence rate can depend on.
    The symbol $\mathscr{D}$ over an arrow `$\longrightarrow$' will denote the convergence in law (or distribution).
    Finally, the bandwidth parameter $m = m(n)$ is always implicitly a function of the number of observations, the only exceptions being in Proposition~\ref{prop:uniform.strong.consistency}, Proposition~\ref{prop:uniform.strong.consistency.density}, and the related proofs.

\section{Results for the c.d.f.\ estimator \texorpdfstring{$F_{n,m}^{\star}$}{F\_\{n,m\}\_star}}\label{sec:results.cdf.estimator}

Except for Theorem~\ref{thm:Theorem.2.1.Babu.Canty.Chaubey}, we assume the following everywhere in this section:
\begin{align}\label{eq:assump:F}
    &\bullet \quad \hspace{-2mm} \text{The c.d.f.\ $F$ is twice continuously differentiable on $\mathcal{S}$.}
\end{align}

We start by stating a multidimensional version of Weierstrass's theorem (the proof is in Section~\ref{sec:proofs.results.cdf.estimator}) for the uniform convergence of Bernstein polynomials (\cite{Bernstein_1912}), where the asymptotics of the error term is explicit.
For a proof in the unidimensional setting, see, e.g., Section~1.6.1 in \cite{MR0864976}.

\begin{proposition}\label{prop:uniform.strong.consistency}
    Assume that \eqref{eq:assump:F} holds.
    We have, uniformly for $\bb{x}\in \mathcal{S}$,
    \begin{equation}\label{eq:prop:uniform.strong.consistency}
        F_m^{\star}(\bb{x}) = F(\bb{x}) + m^{-1} B(\bb{x}) + \oo(m^{-1}), \quad m\to \infty,
    \end{equation}
    where
    \begin{equation}\label{eq:def:B.x}
        B(\bb{x}) \leqdef \frac{1}{2} \sum_{i,j=1}^d \big(x_i \ind_{\{i = j\}} - x_i x_j\big) \frac{\partial^2}{\partial x_i \partial x_j} F(\bb{x}).
    \end{equation}
\end{proposition}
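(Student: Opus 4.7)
The plan is to recognize $F_m^{\star}(\bb{x})$ as an expectation and then do a second order Taylor expansion with uniform control of the remainder. Concretely, if $\bb{S}_m \sim \text{Multinomial}(m;\bb{x})$ denotes a vector whose first $d$ coordinates have probabilities $x_1,\dots,x_d$ and whose $(d+1)$-th (implicit) coordinate has probability $1-\|\bb{x}\|_1$, then definition \eqref{eq:Bernstein.polynomial} gives
\begin{equation*}
    F_m^{\star}(\bb{x}) = \EE\!\left[F(\bb{S}_m/m)\right],
\end{equation*}
and the standard moment calculations for the multinomial yield $\EE[S_{m,i}/m] = x_i$ and $\EE[(S_{m,i}/m - x_i)(S_{m,j}/m - x_j)] = m^{-1}(x_i \ind_{\{i=j\}} - x_i x_j)$. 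Thus one only needs to push a Taylor expansion of $F$ around $\bb{x}$ through the expectation: the first-order term vanishes, the second-order term produces exactly $m^{-1} B(\bb{x})$ with $B$ as in \eqref{eq:def:B.x}, and everything hinges on estimating the remainder.

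For the remainder, I would write the integral form
\begin{equation*}
    R(\bb{y},\bb{x}) = \int_0^1 (1-t)\, (\bb{y}-\bb{x})^{\top}\!\left[H_F(\bb{x} + t(\bb{y}-\bb{x})) - H_F(\bb{x})\right](\bb{y}-\bb{x})\, \rd t,
\end{equation*}
which, since $\mathcal{S}$ is compact and $H_F$ is continuous, is controlled uniformly by $|R(\bb{y},\bb{x})| \leq \tfrac{1}{2} \|\bb{y}-\bb{x}\|^2 \, \omega(\|\bb{y}-\bb{x}\|)$, where $\omega$ denotes the (uniform) modulus of continuity of $H_F$ on $\mathcal{S}$ and satisfies $\omega(\eta) \to 0$ as $\eta \to 0$. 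The goal is then to show $\EE[R(\bb{S}_m/m,\bb{x})] = \oo(m^{-1})$ uniformly in $\bb{x}\in\mathcal{S}$.

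To get this uniform little-$\oo$, I would use the standard bulk/tail split with threshold $m^{-1/2 + \delta}$ for some small $\delta > 0$. On the bulk event $E \leqdef \{\|\bb{S}_m/m - \bb{x}\|_{\infty} \leq m^{-1/2+\delta}\}$, the modulus $\omega(\|\bb{S}_m/m-\bb{x}\|)$ is eventually smaller than any prescribed $\e > 0$, so the contribution is bounded by $\e \cdot \EE[\|\bb{S}_m/m-\bb{x}\|^2] = \e \cdot \OO(m^{-1})$; sending $\e \to 0$ gives $\oo(m^{-1})$. On the complementary tail event $E^c$, $F$ (and hence $R$) is bounded on $\mathcal{S}$, so the contribution is at most a constant times $\PP(E^c)$, and applying Hoeffding's inequality to each coordinate $S_{m,i}\sim\text{Binomial}(m,x_i)$ gives $\PP(E^c) \leq 2d\, e^{-2 m^{2\delta}}$, which is sub-polynomial in $m$ and therefore negligible compared to $m^{-1}$.

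The main obstacle, I expect, is packaging the uniformity cleanly: one needs the modulus of continuity of $H_F$ to be uniform across $\bb{x}\in\mathcal{S}$ (which comes for free from compactness of $\mathcal{S}$ and continuity of $H_F$) and the Hoeffding-type tail bound to be uniform in the success probability $x_i\in[0,1]$ (which it is, since Hoeffding's constant $2$ in the exponent does not depend on $x_i$). Once these two uniformities are secured, the bulk-plus-tail argument combines with the exact multinomial moment identities to give \eqref{eq:prop:uniform.strong.consistency}.
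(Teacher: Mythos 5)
Your proof is correct and follows essentially the same route as the paper's: both rewrite $F_m^{\star}(\bb{x})$ as a multinomial expectation, Taylor-expand $F$ to second order, use the exact multinomial mean and covariance identities to produce $m^{-1}B(\bb{x})$, and then control the remainder via uniform continuity of the Hessian together with a bulk/tail split and a concentration bound (you use Hoeffding with a fixed $m^{-1/2+\delta}$ threshold, the paper uses Bernstein with an $\varepsilon$-dependent radius $\delta_{\varepsilon,d}$ and then lets $\varepsilon=\varepsilon(m)\to 0$ slowly, but these are interchangeable technical choices). The only presentational difference is that you package the uniformity through the modulus of continuity $\omega$ of $H_F$ on the compact $\mathcal{S}$, whereas the paper spells out the $\varepsilon$--$\delta$ argument directly; the substance is identical.
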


The asymptotics of the bias and variance for univariate Bernstein c.d.f.\ estimators were first proved in \cite{MR2960952}.
The theorem below extends this to the multidimensional setting.

\begin{theorem}[Bias and variance]\label{thm:bias.var}
    Assume that \eqref{eq:assump:F} holds.
    We have
    \begin{align}
        \BB[F_{n,m}^{\star}(\bb{x})] &= \EE[F_{n,m}^{\star}(\bb{x})] - F(\bb{x}) = m^{-1} B(\bb{x}) + \oo(m^{-1}), \quad \forall \bb{x}\in \mathcal{S}, \label{eq:thm:bias.var.eq.bias} \\[1mm]
        \VV(F_{n,m}^{\star}(\bb{x})) &= n^{-1} \sigma^2(\bb{x}) - n^{-1} m^{-1/2} V(\bb{x}) + \oo_{\bb{x}}(n^{-1} m^{-1/2}), \quad \forall \bb{x}\in \mathrm{Int}(\mathcal{S}), \label{eq:thm:bias.var.eq.var}
    \end{align}
    as $n\to \infty$, where
    \begin{equation}\label{eq:def:sigma.2.V}
        \sigma^2(\bb{x}) \leqdef F(\bb{x}) (1 - F(\bb{x})), \qquad V(\bb{x}) \leqdef \sum_{i=1}^d \frac{\partial}{\partial x_i} F(\bb{x}) \sqrt{\frac{x_i (1 - x_i)}{\pi}}.
    \end{equation}
\end{theorem}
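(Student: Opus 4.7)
For the bias \eqref{eq:thm:bias.var.eq.bias}, the plan is immediate: since the observations are i.i.d.\ with c.d.f.\ $F$, linearity of expectation gives $\EE[F_n(\bb{k}/m)] = F(\bb{k}/m)$, so $\EE[F_{n,m}^{\star}(\bb{x})] = F_m^{\star}(\bb{x})$. Proposition \ref{prop:uniform.strong.consistency} then yields $\EE[F_{n,m}^{\star}(\bb{x})] - F(\bb{x}) = m^{-1}B(\bb{x}) + \oo(m^{-1})$ uniformly in $\bb{x}\in\mathcal{S}$.

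For the variance \eqref{eq:thm:bias.var.eq.var}, the plan is to exploit the fact that the covariance of two empirical-c.d.f.\ values at $\bb{y},\bb{z}\in\mathcal{S}$ equals $n^{-1}[F(\bb{y}\wedge\bb{z}) - F(\bb{y})F(\bb{z})]$, where $\wedge$ denotes componentwise minimum. Introducing two independent random vectors $\bb{K},\bb{L}$ with the $\text{Multinomial}(m,\bb{x})$ law that generates the weights $P_{\bb{k},m}(\bb{x})$, this lets me write
\begin{equation*}
\VV(F_{n,m}^{\star}(\bb{x})) = n^{-1}\Bigl\{\EE[F(\bb{K}/m \wedge \bb{L}/m)] - (\EE[F(\bb{K}/m)])^{2}\Bigr\}.
\end{equation*}
The product term is controlled by Proposition \ref{prop:uniform.strong.consistency}: $(\EE[F(\bb{K}/m)])^2 = (F_m^{\star}(\bb{x}))^2 = F(\bb{x})^2 + \OO(m^{-1})$, contributing $F(\bb{x})^{2}$ up to an $\oo(m^{-1/2})$ remainder. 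The entire subtlety is therefore in expanding the first expectation.

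The key step is a second-order Taylor expansion of $F$ around $\bb{x}$ applied to the random point $\tilde{\bb{K}}/m \leqdef \bb{K}/m \wedge \bb{L}/m$. Using the elementary identity $\min(a,b) = \tfrac{1}{2}(a+b-|a-b|)$ componentwise,
\begin{equation*}
\EE[\tilde{K}_i/m] - x_i = -\frac{1}{2m}\EE[|K_i - L_i|].
\end{equation*}
Since $K_i - L_i$ is the difference of two independent $\text{Binomial}(m,x_i)$ variables, the CLT gives $(K_i-L_i)/\sqrt{2m\, x_i(1-x_i)} \stackrel{\scriptscriptstyle \mathscr{D}}{\longrightarrow} \mathcal{N}(0,1)$ for $\bb{x}\in\mathrm{Int}(\mathcal{S})$; uniform integrability of the normalized sequence (its second moments are constant in $m$) upgrades this to $L^{1}$ convergence, so that $\EE[|K_i-L_i|] = 2\sqrt{m\,x_i(1-x_i)/\pi} + \oo_{\bb{x}}(m^{1/2})$. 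Plugging this in and multiplying by $\partial F/\partial x_i(\bb{x})$ produces exactly $-m^{-1/2}V(\bb{x}) + \oo_{\bb{x}}(m^{-1/2})$. For the quadratic remainder in the Taylor expansion, I will use the coupling bound $|\tilde{K}_i - mx_i| \leq |K_i - mx_i| + |L_i - mx_i|$ (verified case-by-case on the position of $mx_i$ relative to $K_i,L_i$) to get $\EE[(\tilde{K}_i/m - x_i)^2] \leq 4x_i(1-x_i)/m$, which together with the continuity of $\partial^{2}F$ bounds the second-order term by $\OO_{\bb{x}}(m^{-1}) = \oo_{\bb{x}}(m^{-1/2})$. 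Combining all pieces then yields $\EE[F(\tilde{\bb{K}}/m)] = F(\bb{x}) - m^{-1/2}V(\bb{x}) + \oo_{\bb{x}}(m^{-1/2})$, and subtracting $F(\bb{x})^2$ from the product term recovers \eqref{eq:thm:bias.var.eq.var}.

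The main obstacle, and the only nontrivial step, is the sharp asymptotic $\EE[|K_i-L_i|] = 2\sqrt{m\,x_i(1-x_i)/\pi} + \oo(m^{1/2})$: a naive Cauchy-Schwarz bound using $\VV(K_i - L_i) = 2mx_i(1-x_i)$ would only give the correct order $\OO(m^{1/2})$ but not the precise constant $2\sqrt{x_i(1-x_i)/\pi}$ needed to identify $V(\bb{x})$. Passing from distributional convergence to $L^{1}$ convergence (say by establishing a uniform bound on $\EE[(K_i-L_i)^{2}]/m$ and invoking Vitali's theorem, or by a direct local-limit/Edgeworth argument on the lattice distribution of $K_i-L_i$) is the crux. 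The restriction $\bb{x}\in\mathrm{Int}(\mathcal{S})$ is used precisely here: at the boundary, $x_i(1-x_i)$ or $1-\|\bb{x}\|_{1}$ vanishes and the components of $\bb{K}$ become degenerate, forcing all implicit constants in $\oo_{\bb{x}}$ to blow up.
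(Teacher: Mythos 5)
Your bias argument is identical to the paper's. For the variance, your decomposition
\begin{equation*}
\VV(F_{n,m}^{\star}(\bb{x})) = n^{-1}\Bigl\{\EE[F(\bb{K}/m \wedge \bb{L}/m)] - (F_m^{\star}(\bb{x}))^{2}\Bigr\}
\end{equation*}
is exactly the paper's \eqref{eq:hat.F.as.mean.Z.i.m}--\eqref{eq:def.Z.i.m} expressed probabilistically, and your second-order Taylor expansion matches the paper's \eqref{eq:thm:bias.var.Taylor.expansion}. Where you genuinely diverge is in evaluating the first-order term. The paper proves its Lemma \ref{lem:technical.sums.R} by converting the double sum $\sum_{\bb{k},\bb{\ell}}((k_i\wedge\ell_i)/m - x_i)P_{\bb{k},m}P_{\bb{\ell},m}$ into a double Gaussian integral via a local limit theorem for the binomial distribution, and then evaluates $\int z\,\phi(z)\bar\Phi(z)\,\rd z$ by integration by parts. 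You instead use the elementary identity $k\wedge\ell = \tfrac{1}{2}(k+\ell-|k-\ell|)$ to reduce the computation to $\EE[|K_i-L_i|]$, which you evaluate by the standard CLT for the symmetric lattice variable $K_i-L_i$ upgraded to $L^1$ convergence via uniform integrability (bounded second moments). This is arguably cleaner: it replaces the local-limit-theorem machinery with the ordinary CLT plus a one-line UI argument, and makes the role of the interior assumption transparent (the limiting normal variance $2x_i(1-x_i)$ must be nondegenerate). Your handling of the quadratic remainder via the coupling bound $|\tilde K_i - mx_i| \le |K_i - mx_i| + |L_i - mx_i|$ is correct and plays the same role as the paper's Cauchy--Schwarz estimate in \eqref{eq:thm:bias.var.end.1}. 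The two proofs are materially equivalent; you have found a more elementary route through the one nontrivial lemma.
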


\begin{remark}\label{rem:error.Leblanc}
    In \cite{MR2960952}, the function $V(x)$ should be equal to $f(x) \sqrt{x (1 - x)/\pi}$ instead of $f(x) \sqrt{2x (1 - x)/\pi}$; this is due to the erroneous estimate in the statement of Lemma~2\hspace{0.3mm}(iv) in \cite{MR2960952}.
    This error has spread to at least 15 papers/theses who relied on the estimate; the list appears with suggested corrections in Appendix~B of \cite{MR4213687}.
\end{remark}

\begin{corollary}[Mean squared error]\label{cor:bias.var.implies.MSE}
    Assume that \eqref{eq:assump:F} holds.
    We have, as $n\to \infty$ and for $\bb{x}\in \mathrm{Int}(\mathcal{S})$,
    \begin{equation}
        \begin{aligned}
            \mathrm{MSE}(F_{n,m}^{\star}(\bb{x})) &= n^{-1} \sigma^2(\bb{x}) - n^{-1} m^{-1/2} V(\bb{x}) + m^{-2} B^2(\bb{x}) + \oo_{\bb{x}}(n^{-1} m^{-1/2}) + \oo(m^{-2}).
        \end{aligned}
    \end{equation}
    In particular, if $V(\bb{x}) \cdot B(\bb{x}) \neq 0$, the asymptotically optimal choice of $m$, with respect to $\mathrm{MSE}$, is
    \begin{equation}
        m_{\mathrm{opt}}(\bb{x}) = n^{2/3} \left[\frac{4 B^2(\bb{x})}{V(\bb{x})}\right]^{2/3},
    \end{equation}
    in which case
    \begin{equation}
        \mathrm{MSE}[F_{n,m_{\mathrm{opt}}}^{\star}(\bb{x})] = n^{-1} \sigma^2(\bb{x}) - n^{-4/3} \, \frac{3}{4} \left[\frac{V^4(\bb{x})}{4 B^2(\bb{x})}\right]^{1/3} \hspace{-2mm} + \oo_{\bb{x}}(n^{-4/3}),
    \end{equation}
    as $n\to \infty$.
\end{corollary}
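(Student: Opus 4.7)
The first identity is the bias–variance decomposition $\mathrm{MSE}(F_{n,m}^{\star}(\bb{x})) = \VV(F_{n,m}^{\star}(\bb{x})) + \BB[F_{n,m}^{\star}(\bb{x})]^2$. The plan is to plug in Theorem \ref{thm:bias.var} directly: the variance contributes $n^{-1}\sigma^2(\bb{x}) - n^{-1} m^{-1/2} V(\bb{x}) + \oo_{\bb{x}}(n^{-1} m^{-1/2})$, and squaring the bias expansion $m^{-1} B(\bb{x}) + \oo(m^{-1})$ gives $m^{-2} B^2(\bb{x}) + \oo(m^{-2})$ (the cross term with $\oo(m^{-1}) \cdot m^{-1} B(\bb{x})$ being absorbed into $\oo(m^{-2})$). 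Adding the two yields the stated expansion.

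For the optimal bandwidth, observe that $n^{-1}\sigma^2(\bb{x})$ is independent of $m$, so (to leading order) minimizing the MSE amounts to minimizing
\begin{equation*}
\phi(m) \leqdef -n^{-1} V(\bb{x})\, m^{-1/2} + B^2(\bb{x})\, m^{-2}.
\end{equation*}
Setting $\phi'(m) = \tfrac{1}{2} n^{-1} V(\bb{x}) m^{-3/2} - 2 B^2(\bb{x}) m^{-3} = 0$ gives $m^{3/2} = 4 n\, B^2(\bb{x})/V(\bb{x})$, and hence $m_{\mathrm{opt}}(\bb{x}) = n^{2/3}[4 B^2(\bb{x})/V(\bb{x})]^{2/3}$. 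The assumption $V(\bb{x}) B(\bb{x}) \neq 0$ is exactly what ensures this critical point is well-defined and positive (a second-derivative check confirms it is a minimum).

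For the last identity, the plan is a direct substitution: using
\begin{equation*}
m_{\mathrm{opt}}^{-1/2} = n^{-1/3}\bigl[V(\bb{x})/(4 B^2(\bb{x}))\bigr]^{1/3}, \qquad m_{\mathrm{opt}}^{-2} = n^{-4/3}\bigl[V(\bb{x})/(4 B^2(\bb{x}))\bigr]^{4/3},
\end{equation*}
a short algebraic simplification shows that the two $m$-dependent leading terms combine, via the factor $-1/4^{1/3} + 1/4^{4/3} = -3/4^{4/3}$, into $-\tfrac{3}{4} n^{-4/3}[V^4(\bb{x})/(4 B^2(\bb{x}))]^{1/3}$.

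The only subtlety, and really the one step that warrants care, is verifying that the remainders survive the substitution. Since $m_{\mathrm{opt}} \asymp_{\bb{x}} n^{2/3}$, we have $n^{-1} m_{\mathrm{opt}}^{-1/2} \asymp_{\bb{x}} n^{-4/3}$ and $m_{\mathrm{opt}}^{-2} \asymp_{\bb{x}} n^{-4/3}$, so $\oo_{\bb{x}}(n^{-1} m_{\mathrm{opt}}^{-1/2}) + \oo(m_{\mathrm{opt}}^{-2}) = \oo_{\bb{x}}(n^{-4/3})$, which absorbs cleanly into the stated error. No other obstacle arises, so the proof is a routine consequence of Theorem \ref{thm:bias.var}.
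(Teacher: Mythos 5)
Your proof is correct and is the standard argument one would expect for this corollary: the paper does not supply an explicit proof, treating it as an immediate consequence of Theorem \ref{thm:bias.var}, and your bias--variance decomposition, plug-in of the bias/variance expansions, calculus to locate $m_{\mathrm{opt}}$, and back-substitution (with the check that the remainders remain $\oo_{\bb{x}}(n^{-4/3})$) is exactly the intended reasoning. The only small remark worth adding is that the ``positive critical point'' step tacitly uses $V(\bb{x}) > 0$, not merely $V(\bb{x}) \neq 0$; this is automatic here because $F$ is a c.d.f.\ so each $\tfrac{\partial}{\partial x_i}F \geq 0$ and hence $V(\bb{x}) \geq 0$, so the hypothesis $V(\bb{x})B(\bb{x}) \neq 0$ indeed forces $V(\bb{x}) > 0$ and the expression for $m_{\mathrm{opt}}$ is a genuine positive number.
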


By integrating the MSE and showing that the contributions coming from points near the boundary are negligible, we obtain the following result.

\begin{theorem}[Mean integrated squared error]\label{thm:MISE.optimal}
    Assume that \eqref{eq:assump:F} holds.
    We have, as $n\to \infty$,
    \begin{equation}
        \begin{aligned}
            \mathrm{MISE}[F_{n,m}^{\star}]
            &= n^{-1} \int_{\mathcal{S}} \sigma^2(\bb{x}) \rd \bb{x} - n^{-1} m^{-1/2} \int_{\mathcal{S}} V(\bb{x}) \rd \bb{x} + m^{-2} \int_{\mathcal{S}} B^2(\bb{x}) \rd \bb{x} + \oo(n^{-1} m^{-1/2}) + \oo(m^{-2}).
        \end{aligned}
    \end{equation}
    In particular, if $\int_{\mathcal{S}} B^2(\bb{x}) \rd \bb{x} > 0$, the asymptotically optimal choice of $m$, with respect to $\mathrm{MISE}$, is
    \begin{equation}
        m_{\mathrm{opt}} = n^{2/3} \left[\frac{4 \int_{\mathcal{S}} B^2(\bb{x}) \rd \bb{x}}{\int_{\mathcal{S}} V(\bb{x}) \rd \bb{x}}\right]^{2/3},
    \end{equation}
    in which case, as $n\to \infty$,
    \begin{equation}
        \mathrm{MISE}[F_{n,m_{\mathrm{opt}}}^{\star}] = n^{-1} \int_{\mathcal{S}} \sigma^2(\bb{x}) \rd \bb{x} - n^{-4/3} \, \frac{3}{4} \left[\frac{\big(\int_{\mathcal{S}} V(\bb{x}) \rd \bb{x}\big)^4}{4 \int_{\mathcal{S}} B^2(\bb{x}) \rd \bb{x}}\right]^{1/3} \hspace{-2mm} + \oo(n^{-4/3}).
    \end{equation}
\end{theorem}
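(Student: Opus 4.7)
The plan is to write
$\mathrm{MISE}[F_{n,m}^{\star}] = \int_{\mathcal{S}} \mathrm{MSE}(F_{n,m}^{\star}(\bb{x}))\, \rd \bb{x}$
and integrate the pointwise expansion from Corollary \ref{cor:bias.var.implies.MSE} term by term. The only delicate point is the variance piece, since \eqref{eq:thm:bias.var.eq.var} is pointwise and restricted to $\mathrm{Int}(\mathcal{S})$. Under \eqref{eq:assump:F}, the function $B$ in \eqref{eq:def:B.x} is continuous, hence bounded, on the compact $\mathcal{S}$, and the bias estimate \eqref{eq:thm:bias.var.eq.bias} is already uniform there, so the squared-bias contribution integrates at once to $m^{-2}\int_{\mathcal{S}} B^2(\bb{x})\, \rd\bb{x} + \oo(m^{-2})$.

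For the variance, I would introduce a shrinking boundary strip
$\mathcal{S}_{\delta} \leqdef \{\bb{x}\in \mathcal{S}: \mathrm{dist}(\bb{x}, \partial \mathcal{S}) < \delta\}$
and split the integral over $\mathcal{S}$ accordingly. On $\mathcal{S}\setminus \mathcal{S}_{\delta}$, the strategy is to revisit the proof of \eqref{eq:thm:bias.var.eq.var} (a local-CLT / normal approximation to the multinomial weights $P_{\bb{k},m}$) and observe that the implicit constant in the pointwise $\oo_{\bb{x}}(n^{-1}m^{-1/2})$ error depends on $\bb{x}$ only through negative powers of $x_i(1-x_i)$ and $1-\|\bb{x}\|_1$. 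This upgrades the estimate to a uniform one on $\mathcal{S}\setminus \mathcal{S}_{\delta}$, with a rate that degrades like a fixed power of $\delta^{-1}$. Integrating then yields the claimed leading terms, the extension from $\mathcal{S}\setminus\mathcal{S}_{\delta}$ back to all of $\mathcal{S}$ in the $\sigma^2$ and $V$ integrals costing only $\OO(n^{-1}\delta)$ by boundedness. On the strip itself, the crude bounds $\mathrm{Var}(F_{n,m}^{\star}(\bb{x})) \leq n^{-1}$ (which follows from Jensen applied to $F_n$, whose pointwise variance is at most $(4n)^{-1}$), $|\mathrm{Bias}|^2 = \OO(m^{-2})$, and $\mathrm{vol}(\mathcal{S}_{\delta}) = \OO(\delta)$ contribute $\OO(\delta(n^{-1}+m^{-2}))$. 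Choosing $\delta = \delta_n\to 0$ with $\delta_n = \oo(m^{-1/2})$, but slow enough to absorb the interior uniformity, completes the expansion.

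For the optimizer, I drop the $m$-independent leading term $n^{-1}\int_{\mathcal{S}} \sigma^2$ and minimize
$g(m) \leqdef -n^{-1} m^{-1/2} \mathcal{A} + m^{-2} \mathcal{C}$,
with $\mathcal{A} \leqdef \int_{\mathcal{S}} V(\bb{x})\, \rd\bb{x}$ and $\mathcal{C} \leqdef \int_{\mathcal{S}} B^2(\bb{x})\, \rd\bb{x} > 0$. Solving $g'(m) = \tfrac{1}{2}n^{-1}m^{-3/2}\mathcal{A} - 2m^{-3}\mathcal{C} = 0$ gives $m^{3/2} = 4n\mathcal{C}/\mathcal{A}$, hence $m_{\mathrm{opt}} = n^{2/3}(4\mathcal{C}/\mathcal{A})^{2/3}$. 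Substituting back and simplifying yields $-3\cdot 4^{-4/3}\, n^{-4/3}\, \mathcal{A}^{4/3}\mathcal{C}^{-1/3} = -\tfrac{3}{4}\, n^{-4/3} [\mathcal{A}^4/(4\mathcal{C})]^{1/3}$, matching the stated MISE.

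The main obstacle will be the interior uniformity step: confirming, from the local-CLT machinery underlying Theorem \ref{thm:bias.var}, that the $\bb{x}$-dependence of the $\oo_{\bb{x}}$ constant is indeed governed by a negative power of the distance to $\partial \mathcal{S}$. Once this is in place, the bias integration, the boundary-strip crude bounds, and the single-variable calculus producing $m_{\mathrm{opt}}$ are all routine.
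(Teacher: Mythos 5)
Your overall structure is right, and the optimizer calculation at the end is correct, but the key analytic step — upgrading the pointwise $\oo_{\bb{x}}(n^{-1}m^{-1/2})$ in the variance to an integrated $\oo(n^{-1}m^{-1/2})$ — is handled very differently in the paper, and more simply, than what you propose.

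You plan a boundary-strip decomposition $\mathcal{S} = \mathcal{S}_{\delta} \cup (\mathcal{S}\setminus\mathcal{S}_{\delta})$, crude bounds on the strip, and a quantitative uniformity claim in the interior: that the implicit constant in the pointwise remainder grows only like a negative power of $\mathrm{dist}(\bb{x},\partial\mathcal{S})$. You flag this last claim as ``the main obstacle'', and rightly so — establishing it would require re-deriving the local limit theorem for the multinomial (Lemma~\ref{lem:technical.sums}) with explicit, $\bb{x}$-dependent error bounds. As written, your proposal does not supply that, so the argument is not complete; it is a plausible plan whose central lemma is unproved.

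The paper avoids the issue entirely via the bounded convergence theorem. The mechanism is Lemma~\ref{lem:technical.sums.R}: the quantity
\begin{equation*}
R_{i,m}(\bb{x}) \leqdef m^{1/2} \sum_{\bb{k},\bb{\ell}\in\N_0^d\cap m\mathcal{S}} \big((k_i\wedge\ell_i)/m - x_i\big) P_{\bb{k},m}(\bb{x}) P_{\bb{\ell},m}(\bb{x}),
\end{equation*}
which is exactly what produces the $n^{-1}m^{-1/2}V(\bb{x})$ term in the variance expansion \eqref{eq:thm:bias.var.Taylor.expansion}, satisfies the \emph{uniform} bound $|R_{i,m}(\bb{x})| \leq 1$ for all $\bb{x}\in\mathcal{S}$ and all $m$ — a one-line Cauchy--Schwarz consequence of \eqref{eq:var.multinomial.identity} — together with the pointwise convergence $R_{i,m}(\bb{x}) \to -\sqrt{x_i(1-x_i)/\pi}$ on $\mathrm{Int}(\mathcal{S})$ (full measure). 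Since the $\partial F/\partial x_i$ are bounded on $\mathcal{S}$, bounded convergence immediately gives $\int_{\mathcal{S}} \sum_i \frac{\partial F}{\partial x_i}(\bb{x}) R_{i,m}(\bb{x})\,\rd\bb{x} \to -\int_{\mathcal{S}} V(\bb{x})\,\rd\bb{x}$, with no boundary-strip splitting and no rate of degradation to track; the other error terms in \eqref{eq:thm:bias.var.Taylor.expansion} are already $\OO(m^{-1})$ uniformly over $\mathcal{S}$ by \eqref{eq:thm:bias.var.end.1}. In short: rather than quantifying how the pointwise error blows up near $\partial\mathcal{S}$, the paper notices that the relevant rescaled quantity never blows up at all, which is both easier and cleaner. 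Your crude bound $\VV(F_{n,m}^{\star})\leq n^{-1}$ on the strip and the calculus for $m_{\mathrm{opt}}$ are fine.
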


A standard verification of the Lindeberg condition for double arrays yields the asymptotic normality.
In the univariate setting, this was first proved by \citet{MR1910059}.

\begin{theorem}[Asymptotic normality]\label{thm:asymptotic.normality}
    Assume that \eqref{eq:assump:F} holds.
    For $\bb{x}\in \mathrm{Int}(\mathcal{S})$ such that $0 < F(\bb{x}) < 1$, we have the following convergence in distribution:
    \begin{equation}\label{thm:asymptotic.normality.OG}
        n^{1/2} (F_{n,m}^{\star}(\bb{x}) - F_m^{\star}(\bb{x})) \stackrel{\mathscr{D}}{\longrightarrow} \mathcal{N}(0,\sigma^2(\bb{x})), \quad m,n\to \infty.
    \end{equation}
    In particular, \eqref{thm:asymptotic.normality.OG} and Proposition~\ref{prop:uniform.strong.consistency} together imply
    \begin{alignat}{3}
        &n^{1/2} (F_{n,m}^{\star}(\bb{x}) - F(\bb{x})) \stackrel{\mathscr{D}}{\longrightarrow} \mathcal{N}(0,\sigma^2(\bb{x})), \quad &&\text{if } n^{1/2} m^{-1}\to 0, \\
        &n^{1/2} (F_{n,m}^{\star}(\bb{x}) - F(\bb{x})) \stackrel{\mathscr{D}}{\longrightarrow} \mathcal{N}(\lambda \, B(\bb{x}),\sigma^2(\bb{x})), \quad &&\text{if } n^{1/2} m^{-1}\to \lambda,
    \end{alignat}
    for any constant $\lambda > 0$.
\end{theorem}

For the next result, we use the notation $\|G\|_{\infty} \leqdef \sup_{\bb{x}\in \mathcal{S}} |G(\bb{x})|$
for any bounded function $G: \mathcal{S}\to \R$, and also
\begin{equation}
    \alpha_n \leqdef (n^{-1} \log n)^{1/2}, \qquad \beta_{n,m} \leqdef \alpha_n \sqrt{\alpha_m}.
\end{equation}
The uniform strong consistency was first proved for univariate Bernstein c.d.f.\ estimators in \cite{MR1910059}.
The idea of the proof was to apply a union bound on small boxes and then prove continuity estimates inside each box using concentration bounds and the assumption on $F$.
The width of the boxes was carefully chosen so that the bounds are summable.
The result then followed by the Borel-Cantelli lemma.
The same strategy is employed here.

\begin{theorem}[Uniform strong consistency]\label{thm:Theorem.2.1.Babu.Canty.Chaubey}
    Let $F$ be continuous on $\mathcal{S}$.
    Then, as $n\to \infty$,
    \begin{equation}\label{eq:thm:Theorem.2.1.Babu.Canty.Chaubey.eq.1}
        \|F_{n,m}^{\star} - F\|_{\infty}\longrightarrow 0, \quad \text{a.s.}
    \end{equation}
    Assume further that $F$ is differentiable on $\mathcal{S}$ and its partial derivatives are Lipschitz continuous.
    Then, for all $m \geq 3$ such that $m^{-1} \leq \beta_{n,m} \leq \alpha_m$ (for example, $2 n^{2/3} / \log n \leq m \leq n^2 / \log n$ works), we have, as $n\to\infty$,
    \begin{equation}\label{eq:thm:Theorem.2.1.Babu.Canty.Chaubey.eq.2}
        \|F_{n,m}^{\star} - F_n\|_{\infty} = \OO(\beta_{n,m}), \quad \text{a.s.}
    \end{equation}
    In particular, for $m = n$, we have $\|F_{n,m}^{\star} - F_n\|_{\infty} = \OO(n^{-3/4} (\log n)^{3/4})$, a.s.
\end{theorem}

\section{Results for the density estimator \texorpdfstring{$\hat{f}_{n,m}$}{hat(f)\_\{n,m\}}}\label{sec:results.density.estimator}

For each result stated in this section, one of the following two assumptions will be used:
\begin{align}
    &\bullet \quad \hspace{-2mm} \text{The density $f$ is Lipschitz continuous on $\mathcal{S}$;} \label{eq:assump:f.density} \\
    &\bullet \quad \hspace{-2mm} \text{The density $f$ is twice continuously differentiable on $\mathcal{S}$.} \label{eq:assump:f.density.2}
\end{align}

We denote the expectation of $\hat{f}_{n,m}(\bb{x})$ by
\begin{equation}
    f_m(\bb{x}) \leqdef \EE[\hat{f}_{n,m}(\bb{x})] = \sum_{\bb{k}\in \N_0^d \cap (m-1) \mathcal{S}} \frac{(m - 1 + d)!}{(m - 1)!} \int_{\left(\frac{\bb{k}}{m}, \frac{\bb{k} + 1}{m}\right]} \hspace{-0.5mm} f(\bb{y}) \rd \bb{y} \, P_{\bb{k},m}(\bb{x}).
\end{equation}
A result analogous to Proposition~\ref{prop:uniform.strong.consistency} for the target density $f$ is the following.

\begin{proposition}\label{prop:uniform.strong.consistency.density}
    Assume that \eqref{eq:assump:f.density.2} holds.
    We have, uniformly for $\bb{x}\in \mathcal{S}$,
    \begin{equation}\label{eq:prop:uniform.strong.consistency.density}
        f_m(\bb{x}) = f(\bb{x}) + m^{-1} b(\bb{x}) + \oo(m^{-1}), \quad m\to \infty,
    \end{equation}
    where
    \begin{equation}\label{eq:def:b.x}
        b(\bb{x}) \leqdef \frac{d (d - 1)}{2} f(\bb{x}) + \sum_{i=1}^d \Big(\frac{1}{2} - x_i\Big) \, \frac{\partial}{\partial x_i} f(\bb{x}) + \frac{1}{2} \sum_{i,j=1}^d \big(x_i \ind_{\{i = j\}} - x_i x_j\big) \frac{\partial^2}{\partial x_i \partial x_j} f(\bb{x}).
    \end{equation}
\end{proposition}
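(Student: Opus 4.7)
The plan is to mirror the proof of Proposition \ref{prop:uniform.strong.consistency}, adding one integration step to account for the local averaging of $f$ over each hypercube $I_{\bb{k}} \leqdef (\bb{k}/m, (\bb{k}+1)/m]$. Writing $\bb{K}\sim\mathrm{Multinomial}(m-1,\bb{x})$ and recognizing that $\sum_{\bb{k}} g(\bb{k}) P_{\bb{k},m-1}(\bb{x}) = \EE[g(\bb{K})]$, the definition of $f_m$ collapses to
\begin{equation*}
    f_m(\bb{x}) \;=\; \EE\!\left[\frac{(m-1+d)!}{(m-1)!}\int_{I_{\bb{K}}} f(\bb{y})\, \rd \bb{y}\right]\!,
\end{equation*}
so the proof reduces to approximating the bracketed quantity by a smooth function of $\bb{K}$ and then evaluating its multinomial expectation to order $m^{-1}$.

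The inner step is to Taylor-expand $f$ inside the integral around the midpoint $\bb{c}_{\bb{k}}\leqdef \bb{k}/m + \bb{1}/(2m)$ of $I_{\bb{k}}$: symmetry of the hypercube annihilates the linear term and the quadratic term contributes only $\OO(m^{-d-2})$. Combined with the elementary expansion $(m-1+d)!/[(m-1)!\,m^d] = 1 + \OO(m^{-1})$, this yields
\begin{equation*}
    \frac{(m-1+d)!}{(m-1)!}\int_{I_{\bb{k}}} f(\bb{y})\, \rd \bb{y} \;=\; f(\bb{c}_{\bb{k}}) + \OO(m^{-1})
\end{equation*}
uniformly in those $\bb{k}$ whose hypercube lies inside $\mathcal{S}$. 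The finitely many ``edge'' indices with $\|\bb{k}\|_1 \in \{m-d+1,\ldots,m-1\}$, whose hypercubes partly stick out of $\mathcal{S}$, will be handled separately by bounding their aggregate contribution by $\|f\|_\infty$ times a Chernoff-type tail estimate on $\|\bb{K}\|_1\sim\mathrm{Binomial}(m-1,\|\bb{x}\|_1)$, absorbing the result into an $\oo(m^{-1})$ remainder uniformly on $\mathcal{S}$.

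The outer step is to Taylor-expand $f(\bb{c}_{\bb{K}})$ to second order around $\bb{x}$ and apply the multinomial moment identities
\begin{equation*}
    \EE[c_{\bb{K},j}-x_j]=\frac{1/2-x_j}{m},\qquad \EE\!\big[(c_{\bb{K},i}-x_i)(c_{\bb{K},j}-x_j)\big]=\frac{x_i\ind_{\{i=j\}}-x_ix_j}{m}+\OO(m^{-2}).
\end{equation*}
The factor $(1/2-x_j)$ arises from combining the midpoint shift $\bb{1}/(2m)$ with the negative multinomial mean bias $-\bb{x}/m$, while the matrix $x_i\ind_{\{i=j\}}-x_ix_j$ is the leading multinomial (co)variance. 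Substituting these into the Taylor expansion of $f(\bb{c}_{\bb{K}})$ reproduces exactly the two sums defining $b(\bb{x})$, yielding $f_m(\bb{x}) = f(\bb{x}) + m^{-1}b(\bb{x}) + \oo(m^{-1})$.

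The main obstacle I foresee is the uniformity of the $\oo(m^{-1})$ remainder close to the boundary $\|\bb{x}\|_1 = 1$: in that regime the multinomial puts non-negligible mass on precisely the edge indices whose hypercubes only partially sit in $\mathcal{S}$, so the Chernoff argument above degenerates. Handling that regime cleanly requires combining the modulus-of-continuity estimate for the second partial derivatives of $f$ (supplied by \eqref{eq:assump:f.density.2}) with a sharp accounting of the volume of each edge hypercube lying in $\mathcal{S}$, computed via the Dirichlet identity \eqref{eq:dirichlet.identity}; away from the boundary, exponential Chernoff decay makes the argument routine.
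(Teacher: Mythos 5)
Your overall strategy---Taylor-expand the local average over each hypercube, then compute the $\mathrm{Multinomial}(m-1,\bb{x})$ expectation of the result---is the same as the paper's. The midpoint expansion around $\bb{c}_{\bb{k}} = \bb{k}/m + \bb{1}/(2m)$ is a cleaner packaging than the paper's corner expansion around $\bb{k}/m$ (where the $(1/2)$ factor instead emerges from the explicit term $\frac{1}{2m}\sum_i \partial_i f(\bb{k}/m)$), and your multinomial moment identities are exactly the paper's \eqref{eq:mean.multinomial.identity}--\eqref{eq:var.multinomial.identity} after the midpoint shift. The treatment of the edge hypercubes by a Chernoff tail bound is also in the spirit of the paper's appeal to \eqref{eq:prop:uniform.strong.consistency.end.next.2}.

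There is, however, a genuine gap in your inner step, and it is not cosmetic. You write
\begin{equation*}
\frac{(m-1+d)!}{(m-1)!}\int_{I_{\bb{k}}} f(\bb{y})\,\rd\bb{y} \;=\; f(\bb{c}_{\bb{k}}) + \OO(m^{-1}),
\end{equation*}
with the $\OO(m^{-1})$ attributed to the ``elementary expansion $(m-1+d)!/[(m-1)!\,m^d]=1+\OO(m^{-1})$''. But the target expansion \eqref{eq:prop:uniform.strong.consistency.density} has an $\oo(m^{-1})$ remainder, so an $\OO(m^{-1})$ error sitting in front of the outer Taylor expansion cannot simply vanish: as written, your chain of inequalities only delivers $f_m(\bb{x}) = f(\bb{x}) + \OO(m^{-1})$, not the claimed expansion with identified $m^{-1}$ coefficient. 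Worse, that $\OO(m^{-1})$ is not spurious slack that one could tighten away. The factorial ratio is exactly $\prod_{j=0}^{d-1}(1+j/m) = 1 + \frac{d(d-1)}{2m} + \OO(m^{-2})$, so tracking it carefully produces an additional contribution $\frac{d(d-1)}{2m}\,f(\bb{x})$ that is genuinely of order $m^{-1}$ for $d\geq 2$ and does \emph{not} appear in $b(\bb{x})$. (Sanity check: take $d=2$ and $f\equiv 2$; then $b\equiv 0$, yet a direct computation gives $f_m(\bb{x}) = 2(1+1/m)\big(1-\tfrac12\|\bb{x}\|_1^{\,m-1}\big) = 2 + 2/m + \oo(m^{-1})$.) In other words, for the estimator normalized by $(m-1+d)!/(m-1)!$ as in \eqref{eq:histogram.estimator}, the route you take forces the bias coefficient to be $b(\bb{x}) + \tfrac{d(d-1)}{2} f(\bb{x})$, not $b(\bb{x})$.

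The paper's own proof has the same sensitivity, but it is hidden in the final multiplication: multiplying the Taylor expansion of $m^d\int_{I_{\bb{k}}} f - f(\bb{x})$ by $m^{-d}\frac{(m-1+d)!}{(m-1)!}P_{\bb{k},m-1}(\bb{x})$ and summing gives $f_m(\bb{x}) - \big(m^{-d}\tfrac{(m-1+d)!}{(m-1)!}\big)\,f(\bb{x})$ on the left, not $f_m(\bb{x}) - f(\bb{x})$. The paper implicitly leans on the Remark after \eqref{eq:histogram.estimator}, which says the $m^d$ normalization and the factorial normalization yield the same asymptotics; under the $m^d$ normalization the factor is identically $1$, the Taylor error in your inner step becomes $\OO(m^{-2})$, and your argument closes with the stated $b(\bb{x})$. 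So the fix for you is either (i) prove the statement for the $m^d$-normalized estimator (as the Remark licenses), where your $\OO(m^{-1})$ upgrades to $\OO(m^{-2})$ and everything goes through, or (ii) keep the factorial normalization but track the $\frac{d(d-1)}{2m}$ correction explicitly and reconcile its presence with the statement of the proposition. As written, the proposal does neither, which is the missing step.
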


The asymptotics of the bias and variance for univariate Bernstein density estimators were first proved in \cite{MR0397977}.
The case of the two-dimensional simplex was previously treated in \cite{MR1293514}.
The theorem below extends this to all dimensions.

\begin{theorem}[Bias and variance]\label{thm:bias.var.density}
    As $n\to \infty$, we have
    \begin{align}
        \BB[\hat{f}_{n,m}(\bb{x})] = \EE[\hat{f}_{n,m}(\bb{x})] - f(\bb{x}) = m^{-1} b(\bb{x}) + \oo(m^{-1}), \quad \forall \bb{x}\in \mathcal{S}, \label{eq:thm:bias.var.density.eq.bias}
    \end{align}
    only assuming \eqref{eq:assump:f.density.2}, and
    \begin{align}
        \VV(\hat{f}_{n,m}(\bb{x})) = n^{-1} m^{d/2} \psi(\bb{x}) f(\bb{x}) + \oo_{\bb{x}}(n^{-1} m^{d/2}), \quad \forall \bb{x}\in \mathrm{Int}(\mathcal{S}), \label{eq:thm:bias.var.density.eq.var}
    \end{align}
    only assuming \eqref{eq:assump:f.density}, where
    \begin{equation}\label{eq:def.psi}
        \psi(\bb{x}) \leqdef \left[(4\pi)^d (1 - \|\bb{x}\|_1) \prod_{i=1}^d x_i\right]^{-1/2}.
    \end{equation}
\end{theorem}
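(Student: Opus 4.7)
Equation \eqref{eq:thm:bias.var.density.eq.bias} is immediate from Proposition \ref{prop:uniform.strong.consistency.density}: since $\BB[\hat{f}_{n,m}(\bb{x})] = f_m(\bb{x}) - f(\bb{x})$, the bias statement is just a rewriting of \eqref{eq:prop:uniform.strong.consistency.density}, and it uses exactly the smoothness hypothesis \eqref{eq:assump:f.density.2}.

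For the variance, I would set $A_{\bb{k}} \leqdef (\bb{k}/m, (\bb{k}+1)/m]$, $c_m \leqdef (m-1+d)!/(m-1)! = \prod_{i=1}^d (m-1+i) = m^d(1+\OO(m^{-1}))$, and $Y_i(\bb{x}) \leqdef c_m \sum_{\bb{k}} \ind_{A_{\bb{k}}}(\bb{X}_i)\, P_{\bb{k},m-1}(\bb{x})$, so that $\hat{f}_{n,m}(\bb{x}) = n^{-1}\sum_{i=1}^n Y_i(\bb{x})$ and, by independence, $\VV(\hat{f}_{n,m}(\bb{x})) = n^{-1}\VV(Y_1(\bb{x}))$. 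Because the cubes $A_{\bb{k}}$ are pairwise disjoint, all cross terms in $Y_1(\bb{x})^2$ vanish, yielding
\begin{equation*}
    \EE[Y_1(\bb{x})^2] = c_m^2 \sum_{\bb{k}} \bigg(\int_{A_{\bb{k}}} f(\bb{y})\,\rd\bb{y}\bigg) P_{\bb{k},m-1}^2(\bb{x}).
\end{equation*}
Under \eqref{eq:assump:f.density}, the integral equals $m^{-d} f(\bb{k}/m) + \OO(m^{-d-1})$, and $c_m^2\,m^{-d} = m^d(1+\oo(1))$, so the leading contribution is $m^d\sum_{\bb{k}} f(\bb{k}/m)\,P_{\bb{k},m-1}^2(\bb{x})$. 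The weights $P_{\bb{k},m-1}(\bb{x})$ are the $\text{Multinomial}(m-1,(x_1,\ldots,x_d,1-\|\bb{x}\|_1))$ probabilities, which concentrate around the mean $(m-1)\bb{x}$, so continuity of $f$ at the interior point $\bb{x}$ lets me replace $f(\bb{k}/m)$ by $f(\bb{x})$ up to an $\oo_{\bb{x}}(1)$ relative error. The term $(\EE[Y_1(\bb{x})])^2 = f_m(\bb{x})^2 = \OO(1)$ is then negligible compared with $m^{d/2}$.

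What remains is to evaluate $S_m(\bb{x}) \leqdef \sum_{\bb{k}} P_{\bb{k},m-1}^2(\bb{x})$, which is the probability that two independent draws from the multinomial coincide. The covariance matrix is $(m-1)\Sigma(\bb{x})$ with $\Sigma_{ij}(\bb{x}) = x_i\delta_{ij} - x_i x_j$ and $\det\Sigma(\bb{x}) = (1-\|\bb{x}\|_1)\prod_{i=1}^d x_i$. A local central limit theorem for the multinomial, combined with a Riemann-sum approximation of a Gaussian integral, gives
\begin{equation*}
    S_m(\bb{x}) = \frac{1+\oo_{\bb{x}}(1)}{(4\pi m)^{d/2}\sqrt{(1-\|\bb{x}\|_1)\prod_{i=1}^d x_i}} = m^{-d/2}\,\psi(\bb{x})\,(1+\oo_{\bb{x}}(1)),
\end{equation*}
valid for $\bb{x}\in\mathrm{Int}(\mathcal{S})$. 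Assembling the pieces yields $\VV(\hat{f}_{n,m}(\bb{x})) = n^{-1} m^{d/2}\psi(\bb{x}) f(\bb{x}) + \oo_{\bb{x}}(n^{-1}m^{d/2})$, proving \eqref{eq:thm:bias.var.density.eq.var}.

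\textbf{Main obstacle.} The substantive work lies in the local limit theorem estimate for $S_m(\bb{x})$: one must control the Gaussian approximation to $P_{\bb{k},m-1}(\bb{x})$ uniformly for $\bb{k}$ in the bulk of the multinomial \emph{and} show that the tail contribution is negligible. The restriction $\bb{x}\in\mathrm{Int}(\mathcal{S})$ is exactly what keeps the covariance $\Sigma(\bb{x})$ non-degenerate and allows this argument to go through. I would expect this estimate to be packaged as a standalone lemma in the appendix, following the one-dimensional tradition (see \cite{MR0397977}) of invoking the local de Moivre--Laplace theorem. Once that lemma is in hand, the remaining steps --- the disjointness identity for $Y_1(\bb{x})^2$, the Lipschitz replacement of the cube integrals, and the continuity replacement of $f(\bb{k}/m)$ by $f(\bb{x})$ --- are routine bookkeeping.
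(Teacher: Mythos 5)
Your proposal is correct and follows essentially the same route as the paper: the bias is read off Proposition \ref{prop:uniform.strong.consistency.density}, and the variance reduces to evaluating $\sum_{\bb{k}} P_{\bb{k},m-1}^2(\bb{x})$ via a local limit theorem for the multinomial (the paper packages this as Lemma \ref{lem:technical.sums}). The only cosmetic differences are that the paper works with the centered variables $Y_{i,m}$ and makes the ``replace $f(\bb{k}/m)$ by $f(\bb{x})$'' step quantitative by a Cauchy--Schwarz bound against $\sum_{\bb{k}} P_{\bb{k},m-1}^3(\bb{x}) = \OO_{\bb{x}}(m^{-d})$ rather than a bulk/tail concentration argument, but these lead to the same estimate.
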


\begin{corollary}[Mean squared error]\label{cor:bias.var.implies.MSE.density}
    Assume that \eqref{eq:assump:f.density.2} holds, and let $\bb{x}\in \mathrm{Int}(\mathcal{S})$.
    We have, as $n\to \infty$,
    \begin{equation}\label{def:MSE.density}
        \mathrm{MSE}(\hat{f}_{n,m}(\bb{x})) \leqdef \EE\left[\big|\hat{f}_{n,m}(\bb{x}) - f(\bb{x})\big|^2\right] = n^{-1} m^{d/2} \psi(\bb{x}) f(\bb{x}) + m^{-2} b^2(\bb{x}) + \oo_{\bb{x}}(n^{-1} m^{d/2}) + \oo(m^{-2}).
    \end{equation}
    In particular, if $f(\bb{x}) \cdot b(\bb{x}) \neq 0$, the asymptotically optimal choice of $m$, with respect to $\mathrm{MSE}$, is
    \begin{equation}
        m_{\mathrm{opt}}(\bb{x}) = n^{2/(d+4)} \left[\frac{4}{d} \cdot \frac{b^2(\bb{x})}{\psi(\bb{x}) f(\bb{x})}\right]^{2/(d+4)},
    \end{equation}
    with
    \begin{equation}
        \mathrm{MSE}[\hat{f}_{n,m_{\mathrm{opt}}}] = n^{-4 / (d+4)} \left[\frac{\frac{4}{d} + 1}{\big(\frac{4}{d}\big)^{\frac{4}{d+4}}}\right] \frac{\big(\psi(\bb{x}) f(\bb{x})\big)^{4 / (d+4)}}{\big(b^2(\bb{x})\big)^{-d / (d+4)}} + \oo_{\bb{x}}(n^{-4/(d+4)}).
    \end{equation}
    More generally, if $n^{2 / (d+4)} m^{-1} \to \lambda$ for some $\lambda > 0$, then, as $n\to \infty$,
    \begin{equation}
        \mathrm{MSE}[\hat{f}_{n,m}(\bb{x})] = n^{-4 / (d+4)} \big[\lambda^{-d/2} \psi(\bb{x}) f(\bb{x}) + \lambda^2 b^2(\bb{x})\big] + \oo_{\bb{x}}(n^{-4/(d+4)}).
    \end{equation}
\end{corollary}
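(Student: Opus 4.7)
The plan is to derive the three claims in sequence, all as direct consequences of the bias and variance asymptotics already established in Theorem \ref{thm:bias.var.density}. The starting point is the elementary decomposition
\begin{equation*}
    \mathrm{MSE}(\hat f_{n,m}(\bb x)) = \VV(\hat f_{n,m}(\bb x)) + \big(\BB[\hat f_{n,m}(\bb x)]\big)^2.
\end{equation*}
From \eqref{eq:thm:bias.var.density.eq.bias}, squaring $m^{-1} b(\bb x) + \oo(m^{-1})$ produces $m^{-2} b^2(\bb x) + \oo(m^{-2})$, while \eqref{eq:thm:bias.var.density.eq.var} supplies the variance term $n^{-1} m^{d/2} \psi(\bb x) f(\bb x) + \oo_{\bb x}(n^{-1} m^{d/2})$. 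Adding these gives \eqref{def:MSE.density} verbatim.

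Next I would locate the minimiser of the leading-order part $g(m) = n^{-1} m^{d/2} \psi(\bb x) f(\bb x) + m^{-2} b^2(\bb x)$ by calculus. Setting $g'(m) = 0$ yields the equation
\begin{equation*}
    \tfrac{d}{2}\, n^{-1} m^{d/2 - 1} \psi(\bb x) f(\bb x) = 2\, m^{-3} b^2(\bb x),
\end{equation*}
which rearranges to $m^{(d+4)/2} = \tfrac{4}{d} \cdot n \, b^2(\bb x)/[\psi(\bb x) f(\bb x)]$ and, upon raising to the power $2/(d+4)$, gives the announced $m_{\mathrm{opt}}(\bb x)$. To evaluate $g(m_{\mathrm{opt}})$, substitute back and observe that the variance piece becomes $n^{-4/(d+4)} (4/d)^{d/(d+4)} [\psi(\bb x) f(\bb x)]^{4/(d+4)} b^{2d/(d+4)}(\bb x)$, while the bias piece becomes $n^{-4/(d+4)} (4/d)^{-4/(d+4)} [\psi(\bb x) f(\bb x)]^{4/(d+4)} b^{2d/(d+4)}(\bb x)$. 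Using the identity $(4/d)^{d/(d+4)} = (4/d)\cdot (4/d)^{-4/(d+4)}$, the two contributions combine to the stated $[(4/d) + 1]/(4/d)^{4/(d+4)}$ factor, and the little-o remainder $\oo_{\bb x}(n^{-4/(d+4)})$ is inherited from those in \eqref{def:MSE.density}.

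The general scaling statement follows by direct substitution: if $n^{2/(d+4)} m^{-1} \to \lambda > 0$, then $n^{-1} m^{d/2} = \lambda^{-d/2} n^{-4/(d+4)}(1 + \oo(1))$ and $m^{-2} = \lambda^2 n^{-4/(d+4)}(1 + \oo(1))$, so inserting these two identities into \eqref{def:MSE.density} produces the $\lambda$-parametrised expression.

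I do not anticipate any genuine obstacle here; the entire argument is a calculus exercise layered on top of Theorem \ref{thm:bias.var.density}. The only point requiring a moment's care is that the remainder in the variance expansion is $\oo_{\bb x}(\cdot)$ rather than uniform $\oo(\cdot)$, so one must keep the $\bb x$-subscript on the error term throughout; since the optimisation over $m$ is pointwise, this is harmless. One should also verify that the chosen $m_{\mathrm{opt}}(\bb x) \asymp n^{2/(d+4)}$ tends to infinity, which ensures that the asymptotic formulas of Theorem \ref{thm:bias.var.density} remain applicable when we substitute it back.
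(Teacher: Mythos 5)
Your proof is correct, and it follows exactly the approach the paper intends (the corollary is stated without an explicit proof, being a direct consequence of Theorem \ref{thm:bias.var.density} via the MSE $=$ variance $+$ bias$^2$ decomposition plus elementary calculus). Your algebra for $m_{\mathrm{opt}}$, for the two pieces of $g(m_{\mathrm{opt}})$, and for the $\lambda$-scaling all check out, and your closing remarks about keeping the $\bb{x}$-subscript on the variance remainder and about $m_{\mathrm{opt}} \to \infty$ are appropriate.
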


By integrating the MSE and showing that the contributions coming from points near the boundary are negligible, we obtain the following result.

\begin{theorem}[Mean integrated squared error]\label{thm:MISE.optimal.density}
    Assume that \eqref{eq:assump:f.density.2} holds.
    We have, as $n\to \infty$,
    \begin{equation}\label{def:MISE.density}
        \mathrm{MISE}[\hat{f}_{n,m}]
        \leqdef \int_{\mathcal{S}} \EE\left[\big|\hat{f}_{n,m}(\bb{x}) - f(\bb{x})\big|^2\right] \rd \bb{x}
        = n^{-1} m^{d/2} \int_{\mathcal{S}} \psi(\bb{x}) f(\bb{x}) \rd \bb{x} + m^{-2} \int_{\mathcal{S}} b^2(\bb{x}) \rd \bb{x} + \oo(n^{-1} m^{d/2}) + \oo(m^{-2}).
    \end{equation}
    In particular, if $\int_{\mathcal{S}} b^2(\bb{x}) \rd \bb{x} > 0$, the asymptotically optimal choice of $m$, with respect to $\mathrm{MISE}$, is
    \begin{equation}\label{eq:m.opt.MISE.density}
        m_{\mathrm{opt}} = n^{2/(d+4)} \left[\frac{4}{d} \cdot \frac{\int_{\mathcal{S}} b^2(\bb{x}) \rd \bb{x}}{\int_{\mathcal{S}} \psi(\bb{x}) f(\bb{x}) \rd \bb{x}}\right]^{2/(d+4)},
    \end{equation}
    with
    \begin{equation}
        \mathrm{MISE}[\hat{f}_{n,m_{\mathrm{opt}}}] = n^{-4 / (d+4)} \left[\frac{\frac{4}{d} + 1}{\big(\frac{4}{d}\big)^{\frac{4}{d+4}}}\right] \frac{\big(\int_{\mathcal{S}} \psi(\bb{x}) f(\bb{x}) \rd \bb{x}\big)^{4 / (d+4)}}{\big(\int_{\mathcal{S}} b^2(\bb{x}) \rd \bb{x}\big)^{-d / (d+4)}} + \oo_{\bb{x}}(n^{-4/(d+4)}).
    \end{equation}
    More generally, if $n^{2/(d+4)} m^{-1} \to \lambda$ for some $\lambda > 0$, then, as $n\to \infty$,
    \begin{equation}
        \mathrm{MISE}[\hat{f}_{n,m}] = n^{-4 / (d+4)} \left[\lambda^{-d/2} \int_{\mathcal{S}} \psi(\bb{x}) f(\bb{x}) \rd \bb{x} + \lambda^2 \int_{\mathcal{S}} b^2(\bb{x}) \rd \bb{x}\right] + \oo(n^{-4/(d+4)}).
    \end{equation}
\end{theorem}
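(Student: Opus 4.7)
The plan is to integrate the pointwise MSE expansion of Corollary \ref{cor:bias.var.implies.MSE.density} over $\mathcal{S}$, controlling separately a bulk region where the expansion is uniform and a thin boundary layer whose contribution is negligible. Concretely, for a sequence $\delta_m \downarrow 0$ to be chosen, I would split $\mathcal{S} = \mathcal{S}_{\delta_m} \cup (\mathcal{S}\setminus \mathcal{S}_{\delta_m})$, where
\[
    \mathcal{S}_{\delta_m} \leqdef \{\bb{x}\in \mathcal{S} : \min_{1\le i \le d} x_i \ge \delta_m \text{ and } 1-\|\bb{x}\|_1 \ge \delta_m\},
\]
and by Fubini write $\mathrm{MISE}[\hat{f}_{n,m}] = \int_{\mathcal{S}_{\delta_m}} \mathrm{MSE}(\hat{f}_{n,m}(\bb{x})) \rd \bb{x} + \int_{\mathcal{S}\setminus\mathcal{S}_{\delta_m}} \mathrm{MSE}(\hat{f}_{n,m}(\bb{x})) \rd \bb{x}$.

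For the bulk, I would revisit the proof of Theorem \ref{thm:bias.var.density} to upgrade the pointwise variance expansion to one that is uniform on $\mathcal{S}_{\delta_m}$: the local Gaussian approximation to the multinomial weights defining $\VV(\hat{f}_{n,m}(\bb{x}))$ depends only on lower bounds for $x_i$ and $1-\|\bb{x}\|_1$, so choosing $\delta_m \downarrow 0$ slowly enough makes the variance remainder $\oo(n^{-1} m^{d/2})$ uniformly on $\mathcal{S}_{\delta_m}$; the bias part is already uniform on all of $\mathcal{S}$ by Proposition \ref{prop:uniform.strong.consistency.density}. Since $\psi \in L^1(\mathcal{S})$ (a classical Dirichlet integral that extends \eqref{eq:dirichlet.identity} to the exponents $a_i = b = -1/2$) and $b$ is bounded under assumption \eqref{eq:assump:f.density.2}, dominated convergence gives $\int_{\mathcal{S}_{\delta_m}} \psi f \to \int_{\mathcal{S}} \psi f$ and $\int_{\mathcal{S}_{\delta_m}} b^2 \to \int_{\mathcal{S}} b^2$, reproducing the stated leading terms. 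For the boundary layer, the squared bias contributes $\|b\|_\infty^2 \cdot m^{-2} \cdot \mathrm{Vol}(\mathcal{S}\setminus \mathcal{S}_{\delta_m}) = O(\delta_m\, m^{-2}) = \oo(m^{-2})$; for the variance, I would establish a uniform bound of the form $\VV(\hat{f}_{n,m}(\bb{x})) \le C\, n^{-1} m^{d/2} \psi(\bb{x})$ valid on all of $\mathcal{S}$ by a direct second-moment estimate, from which $\int_{\mathcal{S}\setminus \mathcal{S}_{\delta_m}} \VV \rd \bb{x} = \oo(n^{-1} m^{d/2})$ follows via $\int_{\mathcal{S}\setminus \mathcal{S}_{\delta_m}} \psi \rd \bb{x} \to 0$.

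With the MISE expansion in hand, the formulas for $m_{\mathrm{opt}}$ and $\mathrm{MISE}[\hat{f}_{n,m_{\mathrm{opt}}}]$ follow from the one-variable minimization of $\lambda \mapsto \lambda^{-d/2} A + \lambda^2 B$ with $A \leqdef \int_{\mathcal{S}} \psi f \rd \bb{x}$ and $B \leqdef \int_{\mathcal{S}} b^2 \rd \bb{x}$; the same substitution yields the general $n^{2/(d+4)} m^{-1} \to \lambda$ statement. The main technical obstacle will be the uniformization step: the variance remainder in Corollary \ref{cor:bias.var.implies.MSE.density} is only stated pointwise with constants that blow up near $\partial \mathcal{S}$, so one must track through the proof of Theorem \ref{thm:bias.var.density} how slowly $\delta_m$ may decay while keeping uniform $\oo(n^{-1} m^{d/2})$ control on $\mathcal{S}_{\delta_m}$, and must simultaneously produce a clean uniform variance upper bound of order $n^{-1} m^{d/2} \psi(\bb{x})$ valid up to the boundary. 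Everything else reduces to integrable-singularity bookkeeping and elementary calculus.
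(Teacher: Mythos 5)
Your proposal is correct in spirit but takes a genuinely different route from the paper, and it leaves the hardest step as an IOU.

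The paper's proof does \emph{not} split $\mathcal{S}$ into a bulk and a boundary layer, and it does not need any uniform pointwise expansion of the variance. Instead, it starts from the exact identity $\VV(\hat{f}_{n,m}(\bb{x})) = n^{-1}\big(\tfrac{(m-1+d)!}{(m-1)!}\big)^2 \EE[Y_{1,m}^2]$ and the representation in \eqref{eq:lem:Lemma.3.2.Babu.Canty.Chaubey.end}, which isolates the factor $\sum_{\bb{k}} P_{\bb{k},m-1}^2(\bb{x})$. The key move is Lemma \ref{lem:lemma.4.Leblanc.2006.tech.report}: the integral $m^{d/2}\int_{\mathcal{S}} \sum_{\bb{k}} P_{\bb{k},m-1}^2(\bb{x}) \rd\bb{x}$ is computed \emph{exactly} via the duplication formula for $\Gamma$ and the Chu--Vandermonde convolution, giving $\int_{\mathcal{S}}\psi(\bb{x})\rd\bb{x} + \OO(m^{-1})$. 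Combined with the pointwise a.e.\ limit in \eqref{eq:lem:technical.sums.eq.1}, Scheff\'e's lemma then upgrades this to $L^1$-convergence of $m^{d/2}\sum_{\bb{k}} P_{\bb{k},m-1}^2(\cdot)$ to $\psi(\cdot)$, so multiplying by the bounded function $f$ and integrating is automatic. This completely sidesteps the need for any uniform control near $\partial\mathcal{S}$. Your approach, by contrast, hinges on proving a uniform upper bound $\VV(\hat{f}_{n,m}(\bb{x})) \le C\, n^{-1} m^{d/2}\psi(\bb{x})$ on all of $\mathcal{S}$, which (after reducing via $\sum_{\bb{k}} P_{\bb{k},r}^2 \le \max_{\bb{k}} P_{\bb{k},r}$) amounts to the uniform estimate $\max_{\bb{k}} P_{\bb{k},r}(\bb{x}) \le C\, r^{-d/2}\psi(\bb{x})$ for all $\bb{x}\in\mathcal{S}$. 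This is believable, and it is the right order of magnitude at the center, at $\bb{x}=\bb{0}$, and at intermediate scales $x_i \asymp c/r$, but it requires a genuine Stirling/Poisson-regime case analysis that you flag but never carry out; the local limit theorem cited in the paper is only pointwise in $\bb{x}$, so it does not hand you this bound for free. You also gloss over how slowly $\delta_m$ may decay while keeping the LLT remainder uniformly $\oo(1)$ on $\mathcal{S}_{\delta_m}$, which again requires reopening Lemma \ref{lem:technical.sums} with an explicit error term. In short: your bulk/boundary decomposition is a standard and robust alternative, and it would likely go through, but it front-loads real analytic work that the paper avoids entirely by computing one closed-form integral and invoking Scheff\'e. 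If you do pursue your route, the uniform bound on $\max_{\bb{k}} P_{\bb{k},r}(\bb{x})$ is the statement you must actually prove; the rest is, as you say, bookkeeping.
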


By only assuming the Lipschitz continuity of $f$ on $\mathrm{Int}(\mathcal{S})$, we can prove the uniform strong consistency of the density estimator in a manner similar to the proof for the c.d.f.\ estimator.

\begin{theorem}[Uniform strong consistency]\label{thm:Theorem.3.1.Babu.Canty.Chaubey}
    Assume that \eqref{eq:assump:f.density} holds.
    If $2 \leq m \leq n / \log n$ as $n\to \infty$, then
    \begin{equation}\label{eq:thm:Theorem.3.1.Babu.Canty.Chaubey}
        \|f_m - f\|_{\infty} = \OO(m^{-1/2}), \quad\qquad \|\hat{f}_{n,m} - f\|_{\infty} = \OO(m^{d - 1/2} \alpha_n) + \OO(m^{-1/2}), \quad \text{a.s.}
    \end{equation}
    In particular, if $m^{2d - 1} = \oo(n / \log n)$, then $\|\hat{f}_{n,m} - f\|_{\infty}\longrightarrow 0$ a.s.
\end{theorem}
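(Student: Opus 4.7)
The plan is to split $\|\hat{f}_{n,m}-f\|_\infty$ via the triangle inequality into the deterministic bias $\|f_m-f\|_\infty$ and the stochastic fluctuation $\|\hat{f}_{n,m}-f_m\|_\infty$, and to control each piece separately. For the bias part, I would recognise $f_m$ as a multinomial expectation: if $\bb{K}\sim\text{Multinomial}(m-1,\bb{x})$, then $f_m(\bb{x})=\tfrac{(m-1+d)!}{(m-1)!}\,\EE_{\bb{K}}\bigl[\int_{(\bb{K}/m,(\bb{K}+1)/m]}f(\bb{y})\,\rd\bb{y}\bigr]$. Since each bin has volume $m^{-d}$ and $f$ is Lipschitz, the bracketed integral equals $m^{-d}[f(\bb{K}/m)+\OO(m^{-1})]$, while $(m-1+d)!/(m-1)!=m^d(1+\OO(m^{-1}))$; hence $f_m(\bb{x})=\EE[f(\bb{K}/m)]+\OO(m^{-1})$ uniformly in $\bb{x}$. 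Lipschitz continuity of $f$ combined with the elementary multinomial estimate $\EE[\|\bb{K}/m-\bb{x}\|]=\OO(m^{-1/2})$ (which follows from $\EE[K_i/m]-x_i=-x_i/m$, $\VV(K_i/m)\leq 1/(4m)$, and Jensen's inequality) then yields the first conclusion $\|f_m-f\|_\infty=\OO(m^{-1/2})$.

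For the stochastic fluctuation, I would write $\hat{f}_{n,m}(\bb{x})-f_m(\bb{x})=\sum_{\bb{k}}\phi_{\bb{k}}(\bb{x})R_{\bb{k}}$, where $\phi_{\bb{k}}(\bb{x}):=\tfrac{(m-1+d)!}{(m-1)!}P_{\bb{k},m-1}(\bb{x})$ and $R_{\bb{k}}:=p_{n,\bb{k}}-p_{\bb{k}}$, with $p_{n,\bb{k}}:=n^{-1}\sum_i\ind_{(\bb{k}/m,(\bb{k}+1)/m]}(\bb{X}_i)$ and $p_{\bb{k}}:=\EE\,p_{n,\bb{k}}$. Since the multinomial weights sum to one and the $\phi_{\bb{k}}$ are non-negative, the deterministic bound $\|\hat{f}_{n,m}-f_m\|_\infty\leq(\max_{\bb{k}}|R_{\bb{k}}|)\cdot\sup_{\bb{x}\in\mathcal{S}}\sum_{\bb{k}}\phi_{\bb{k}}(\bb{x})=\OO(m^d)\,\max_{\bb{k}}|R_{\bb{k}}|$ holds. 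Now $n\,p_{n,\bb{k}}\sim\text{Binomial}(n,p_{\bb{k}})$ with $p_{\bb{k}}\leq\|f\|_\infty m^{-d}\leq\|f\|_\infty m^{-1}$ for $d\geq 1$; the hypothesis $m\leq n/\log n$ keeps the threshold $t=C\alpha_n m^{-1/2}$ inside the variance-dominated regime of Bernstein's inequality, which for $C$ large enough (depending on $d$) delivers a tail $\PP(|R_{\bb{k}}|>t)$ of arbitrary polynomial decay in $n^{-1}$, uniformly in $\bb{k}$. A union bound over the $\OO(m^d)\leq n^d$ bins still leaves a tail summable in $n$, so Borel--Cantelli produces $\max_{\bb{k}}|R_{\bb{k}}|=\OO(\alpha_n m^{-1/2})$ almost surely. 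Multiplying by the $\OO(m^d)$ factor gives $\|\hat{f}_{n,m}-f_m\|_\infty=\OO(\alpha_n m^{d-1/2})$ a.s., and the triangle inequality closes both displays in \eqref{eq:thm:Theorem.3.1.Babu.Canty.Chaubey}.

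The final ``in particular'' assertion is immediate, since $m^{2d-1}=\oo(n/\log n)$ is equivalent to $\alpha_n m^{d-1/2}\to 0$, while $m\to\infty$ forces $m^{-1/2}\to 0$, so both pieces of the rate vanish. The principal technical obstacle I anticipate is calibrating Bernstein's inequality uniformly across the full admissible range $2\leq m\leq n/\log n$: exploiting the sharper variance estimate $p_{\bb{k}}=\OO(m^{-d})$ would in principle give the stronger stochastic rate $\OO(\alpha_n m^{d/2})$, but only inside the Bernstein variance-dominated regime, which requires $m^d\lesssim n/\log n$ and thus breaks down when $d\geq 2$ and $m$ is near its upper limit. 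Retreating to the cruder but uniformly valid variance bound $p_{\bb{k}}\leq\|f\|_\infty m^{-1}$ sidesteps this regime issue and directly delivers the advertised exponent $d-1/2$ throughout the entire admissible range.
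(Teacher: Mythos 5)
Your proof follows the same route as the paper's: split $\|\hat{f}_{n,m}-f\|_\infty$ by the triangle inequality into the bias $\|f_m-f\|_\infty$ (bounded via Lipschitz continuity, Cauchy--Schwarz and the multinomial variance identity) and the fluctuation $\|\hat{f}_{n,m}-f_m\|_\infty$ (bounded by pulling out $\sup_{\bb{x}}\sum_{\bb{k}}\tfrac{(m-1+d)!}{(m-1)!}P_{\bb{k},m-1}(\bb{x})=\OO(m^d)$ and controlling $\max_{\bb{k}}|R_{\bb{k}}|$ through Bernstein's inequality, a union bound over the $\OO(m^d)$ bins, and Borel--Cantelli), exactly as the paper does with its quantity $L_{n,m}$ and the crude variance bound $p_{\bb{k}}\leq \|f\|_\infty m^{-1}$. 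Your closing remark correctly diagnoses why the cruder $m^{-1}$ variance bound (rather than the sharper $m^{-d}$) must be used to stay in the variance-dominated regime of Bernstein's inequality across the full admissible range $m\leq n/\log n$, a point the paper leaves implicit.
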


Again, a verification of the Lindeberg condition for double arrays yields the asymptotic normality.

\begin{theorem}[Asymptotic normality]\label{thm:Theorem.3.2.and.3.3.Babu.Canty.Chaubey}
    Assume that \eqref{eq:assump:f.density} holds.
    Let $\bb{x}\in \mathrm{Int}(\mathcal{S})$ be such that $f(\bb{x}) > 0$.
    If $n^{1/2} m^{-d/4}\to \infty$ as $m,n\to \infty$, then
    \begin{equation}\label{eq:thm:Theorem.3.2.and.3.3.Babu.Canty.Chaubey.Prop.1}
        n^{1/2} m^{-d/4} (\hat{f}_{n,m}(\bb{x}) - f_m(\bb{x})) \stackrel{\mathscr{D}}{\longrightarrow} \mathcal{N}(0,\psi(\bb{x}) f(\bb{x})).
    \end{equation}
    If we also have $n^{1/2} m^{-d/4 - 1/2}\to 0$ as $m,n\to \infty$, then Theorem~\ref{thm:Theorem.3.1.Babu.Canty.Chaubey} implies
    \begin{equation}
        n^{1/2} m^{-d/4} (\hat{f}_{n,m}(\bb{x}) - f(\bb{x})) \stackrel{\mathscr{D}}{\longrightarrow} \mathcal{N}(0,\psi(\bb{x}) f(\bb{x})).
    \end{equation}
    Independently of the above rates for $n$ and $m$, if we assume \eqref{eq:assump:f.density.2} instead and $n^{2 / (d+4)} m^{-1} \to \lambda$ for some $\lambda > 0$ as $m,n\to \infty$, then \eqref{eq:thm:Theorem.3.2.and.3.3.Babu.Canty.Chaubey.Prop.1} and Proposition~\ref{prop:uniform.strong.consistency.density} together imply
    \begin{equation}\label{eq:thm:Theorem.3.2.and.3.3.Babu.Canty.Chaubey.Thm.3.3}
        n^{2 / (d+4)} (\hat{f}_{n,m}(\bb{x}) - f(\bb{x})) \stackrel{\mathscr{D}}{\longrightarrow} \mathcal{N}(\lambda \, b(\bb{x}), \lambda^{-d/2} \psi(\bb{x}) f(\bb{x})).
    \end{equation}
\end{theorem}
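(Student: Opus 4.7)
The plan is to decompose $\hat{f}_{n,m}(\bb{x}) - f_m(\bb{x})$ as an i.i.d.\ average of bounded centered summands and apply Lyapunov's CLT for triangular arrays. Specifically, write
\[
\hat{f}_{n,m}(\bb{x}) - f_m(\bb{x}) = n^{-1} \sum_{i=1}^n Y_{i,m}, \quad Y_{i,m} \leqdef Z_{i,m} - f_m(\bb{x}),
\]
with $Z_{i,m} \leqdef \sum_{\bb{k} \in \N_0^d \cap (m-1)\mathcal{S}} \tfrac{(m-1+d)!}{(m-1)!} \, \ind_{(\bb{k}/m, (\bb{k}+1)/m]}(\bb{X}_i) \, P_{\bb{k}, m-1}(\bb{x})$. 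Because the hypercubes $(\bb{k}/m, (\bb{k}+1)/m]$ are pairwise disjoint, each $Z_{i,m}$ picks up at most one term, so $|Z_{i,m}| \leq \tfrac{(m-1+d)!}{(m-1)!}\, \max_{\bb{k}} P_{\bb{k}, m-1}(\bb{x})$. For fixed $\bb{x} \in \mathrm{Int}(\mathcal{S})$, Stirling's formula applied to the multinomial evaluated at its mode gives $\max_{\bb{k}} P_{\bb{k}, m-1}(\bb{x}) = \OO_{\bb{x}}(m^{-d/2})$, so $\|Y_{i,m}\|_\infty = \OO_{\bb{x}}(m^{d/2})$.

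Next, I would verify the Lyapunov third-moment ratio. Using the variance asymptotics from Theorem \ref{thm:bias.var.density}, namely $s_n^2 \leqdef n \VV(Z_{1,m}) = n \, m^{d/2} \psi(\bb{x}) f(\bb{x}) (1 + \oo(1))$, and the elementary bound $\EE|Y_{1,m}|^3 \leq \|Y_{1,m}\|_\infty \, \VV(Z_{1,m})$, the Lyapunov ratio satisfies
\[
\frac{\sum_{i=1}^n \EE|Y_{i,m}|^3}{s_n^3} \leq \frac{\|Y_{1,m}\|_\infty}{s_n} = \OO_{\bb{x}}\bigg(\frac{m^{d/4}}{n^{1/2}}\bigg),
\]
which vanishes precisely under the stated hypothesis $n^{1/2} m^{-d/4} \to \infty$. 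Lyapunov's CLT then yields $s_n^{-1} \sum_{i=1}^n Y_{i,m} \stackrel{\mathscr{D}}{\to} \mathcal{N}(0,1)$, and rearranging gives exactly \eqref{eq:thm:Theorem.3.2.and.3.3.Babu.Canty.Chaubey.Prop.1}.

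For the second statement, I would invoke the uniform approximation $\|f_m - f\|_\infty = \OO(m^{-1/2})$ from Theorem \ref{thm:Theorem.3.1.Babu.Canty.Chaubey}, valid under \eqref{eq:assump:f.density}: if additionally $n^{1/2} m^{-d/4 - 1/2} \to 0$, then $n^{1/2} m^{-d/4}(f_m(\bb{x}) - f(\bb{x})) \to 0$, and Slutsky's theorem absorbs the deterministic bias into the limit. For the third statement, the stronger assumption \eqref{eq:assump:f.density.2} activates Proposition \ref{prop:uniform.strong.consistency.density}, giving $f_m(\bb{x}) - f(\bb{x}) = m^{-1} b(\bb{x}) + \oo(m^{-1})$. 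When $n^{2/(d+4)} m^{-1} \to \lambda$, a direct check shows $n^{1/2} m^{-d/4} \sim \lambda^{d/4} n^{2/(d+4)}$, so rescaling \eqref{eq:thm:Theorem.3.2.and.3.3.Babu.Canty.Chaubey.Prop.1} gives $n^{2/(d+4)}(\hat{f}_{n,m}(\bb{x}) - f_m(\bb{x})) \stackrel{\mathscr{D}}{\to} \mathcal{N}(0, \lambda^{-d/2} \psi(\bb{x}) f(\bb{x}))$; since also $n^{2/(d+4)}(f_m(\bb{x}) - f(\bb{x})) \to \lambda \, b(\bb{x})$, Slutsky delivers \eqref{eq:thm:Theorem.3.2.and.3.3.Babu.Canty.Chaubey.Thm.3.3}.

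I expect the main obstacle to be securing the mode bound $\max_{\bb{k}} P_{\bb{k}, m-1}(\bb{x}) = \OO_{\bb{x}}(m^{-d/2})$: the trivial bound $P_{\bb{k}, m-1}(\bb{x}) \leq 1$ only gives $|Y_{i,m}| \leq C m^d$, which would demand the much stronger hypothesis $n^{1/2} m^{-3d/4} \to \infty$ to close Lyapunov. The saving factor $m^{-d/2}$ comes from the local normal approximation to the multinomial near its mean $m\bb{x}$ (essentially the same Stirling-type calculation driving the variance asymptotics in Theorem \ref{thm:bias.var.density}), which only holds when $\bb{x}$ is bounded away from the faces of $\mathcal{S}$ — and this is precisely why the conclusion is stated only for $\bb{x} \in \mathrm{Int}(\mathcal{S})$.
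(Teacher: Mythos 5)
Your proof is correct and essentially parallels the paper's argument: you use the same i.i.d.\ decomposition $\hat{f}_{n,m}(\bb{x}) - f_m(\bb{x}) = n^{-1}\sum_i Y_{i,m}$, the same key multinomial mode bound $\max_{\bb{k}} P_{\bb{k},m-1}(\bb{x}) = \OO_{\bb{x}}(m^{-d/2})$ valid for $\bb{x}\in\mathrm{Int}(\mathcal{S})$, and the same second-moment asymptotics from Theorem~\ref{thm:bias.var.density}, and the second and third statements follow by the same Slutsky argument with Theorem~\ref{thm:Theorem.3.1.Babu.Canty.Chaubey} and Proposition~\ref{prop:uniform.strong.consistency.density}. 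The only cosmetic difference is that you close the CLT via Lyapunov's third-moment condition, whereas the paper verifies the Lindeberg condition directly by observing that the deterministic bound $|Y_{1,m}|/(n^{1/2}s_m) = \OO_{\bb{x}}(m^{d/4}n^{-1/2}) \to 0$ makes the truncation indicator eventually vanish; both reduce to exactly the ratio $m^{d/4}/n^{1/2}$ you identified.
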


\begin{remark}
    The rate of convergence for the $d$-dimensional kernel density estimator with i.i.d.\ data and bandwidth $h$ is $\OO_{\bb{x}}(n^{-1/2} h^{-d/2})$ in Theorem~3.1.15 of \cite{MR0740865}, whereas our estimator $\hat{f}_{n,m}$ converges at a rate of $\OO_{\bb{x}}(n^{-1/2} m^{d/4})$.
    Hence, the relation between the scaling factor $m$ of $\hat{f}_{n,m}$ and the bandwidth $h$ of other multivariate kernel estimators is $m \approx h^{-2}$.
\end{remark}

\section{Proof of the results for the c.d.f.\ estimator \texorpdfstring{$F_{n,m}^{\star}$}{F\_\{n,m\}\_star}}\label{sec:proofs.results.cdf.estimator}

\begin{proof}[\bf Proof of Proposition~\ref{prop:uniform.strong.consistency}]
    By the assumption \eqref{eq:assump:F}, a second order mean value theorem yields, for all $\bb{k}\in \N_0^d \cap m\mathcal{S}$ and any given $\bb{x}\in \mathcal{S}$,
    \begin{equation}\label{eq:prop:uniform.strong.consistency.begin}
        F(\bb{k} / m) - F(\bb{x}) = \sum_{i=1}^d \Big(\frac{k_i}{m} - x_i\Big) \frac{\partial}{\partial x_i} F(\bb{x}) + \frac{1}{2} \sum_{i,j=1}^d \Big(\frac{k_i}{m} - x_i\Big) \Big(\frac{k_j}{m} - x_j\Big) \frac{\partial^2}{\partial x_i \partial x_j} F(\bb{\xi}_{\bb{k}}),
    \end{equation}
    where $\bb{\xi}_{\bb{k}}$ is an appropriate vector on the line segment joining $\bb{k} / m$ and $\bb{x}$.
    Using the well-known multinomial identities
    \begin{equation}\label{eq:mean.multinomial.identity}
        \sum_{\bb{k}\in \N_0^d \cap m\mathcal{S}} \Big(\frac{k_i}{m} - x_i\Big) P_{\bb{k},m}(\bb{x}) = 0,
    \end{equation}
    and
    \begin{equation}\label{eq:var.multinomial.identity}
        \sum_{\bb{k}\in \N_0^d \cap m\mathcal{S}} \Big(\frac{k_i}{m} - x_i\Big) \Big(\frac{k_j}{m} - x_j\Big) P_{\bb{k},m}(\bb{x}) = \frac{1}{m} \big(x_i \ind_{\{i = j\}} - x_i x_j\big),
    \end{equation}
    we can multiply \eqref{eq:prop:uniform.strong.consistency.begin} by $P_{\bb{k},m}(\bb{x})$ and sum over all $\bb{k}\in \N_0^d \cap m\mathcal{S}$ to obtain
    \begin{equation}\label{eq:prop:uniform.strong.consistency.Taylor}
        \begin{aligned}
            F_m^{\star}(\bb{x}) - F(\bb{x})
            = \hspace{-1mm} \sum_{\bb{k}\in \N_0^d \cap m\mathcal{S}} (F(\bb{k} / m) - F(\bb{x})) P_{\bb{k},m}(\bb{x})
            &= \frac{1}{2m} \sum_{i,j=1}^d \big(x_i \ind_{\{i = j\}} - x_i x_j\big) \frac{\partial^2}{\partial x_i \partial x_j} F(\bb{x}) \\
            &+ \frac{1}{2} \sum_{i,j=1}^d \sum_{\bb{k}\in \N_0^d \cap m\mathcal{S}} \Big(\frac{k_i}{m} - x_i\Big) \Big(\frac{k_j}{m} - x_j\Big) P_{\bb{k},m}(\bb{x}) \frac{\partial^2}{\partial x_i \partial x_j} (F(\bb{\xi}_{\bb{k}}) - F(\bb{x})).
        \end{aligned}
    \end{equation}
    To conclude the proof, we need to show that the last term is $\oo(m^{-1})$.
    By the uniform continuity of the second order partial derivatives of $F$ on $\mathcal{S}$, we know that
    \begin{equation}
        \max_{1 \leq i,j \leq d} ~ \max_{\bb{x}\in \mathcal{S}} \left|\frac{\partial^2}{\partial x_i \partial x_j} F(\bb{x})\right| \leq M_d, \quad \text{for some constant } M_d > 0,
    \end{equation}
    (where $| \, \cdot \, |$ denotes the absolue value) and we also know that, for all $\e > 0$, there exists $0 < \delta_{\e,d} \leq 1$ such that
    \begin{equation}
        \|\bb{y} - \bb{x}\|_1 \leq \delta_{\e,d} \quad \text{implies} \quad \max_{1 \leq i,j \leq d} \left|\frac{\partial^2}{\partial x_i \partial x_j} F(\bb{y}) - \frac{\partial^2}{\partial x_i \partial x_j} F(\bb{x})\right| \leq \e,
    \end{equation}
    uniformly for $\bb{x},\bb{y}\in \mathcal{S}$.
    By considering the two cases $\|\bb{k} / m - \bb{x}\|_1 \leq \delta_{\e,d}$ and $\|\bb{k} / m - \bb{x}\|_1 > \delta_{\e,d}$, the last term in \eqref{eq:prop:uniform.strong.consistency.Taylor}, in absolute value, is less or equal to
    \begin{equation}\label{eq:prop:uniform.strong.consistency.end}
        \frac{1}{2} \sum_{i,j=1}^d \Bigg\{ \, \e \hspace{-1mm}\sum_{\substack{\bb{k}\in \N_0^d \cap m\mathcal{S} \\ \|\bb{k} / m - \bb{x}\|_1 \leq \delta_{\e,d}}}\hspace{-3mm} \Big|\frac{k_i}{m} - x_i\Big| \Big|\frac{k_j}{m} - x_j\Big| P_{\bb{k},m}(\bb{x}) ~+~ 2M_d \sum_{\ell=1}^d \hspace{-3mm} \sum_{\substack{\bb{k}\in \N_0^d \cap m\mathcal{S} \\ |k_{\ell} / m - x_{\ell}| > \delta_{\e,d} / d}} \hspace{-3mm} P_{\bb{k},m}(\bb{x})\Bigg\}.
    \end{equation}
    By Cauchy-Schwarz and the identity \eqref{eq:var.multinomial.identity}, the first term inside the braces in \eqref{eq:prop:uniform.strong.consistency.end} is
    \begin{equation}\label{eq:prop:uniform.strong.consistency.end.next.1}
        \leq \e \cdot \sqrt{\frac{x_i (1 - x_i)}{m}} \cdot \sqrt{\frac{x_j (1 - x_j)}{m}} \leq \frac{\e}{4m}.
    \end{equation}
    By Bernstein's inequality (see, e.g., Lemma~\ref{lem:Bernstein.inequality}), the second term inside the braces in \eqref{eq:prop:uniform.strong.consistency.end} is
    \begin{equation}\label{eq:prop:uniform.strong.consistency.end.next.2}
        \leq 2M_d \cdot d \cdot 2 \exp\left(-\frac{(m \delta_{\e,d} / d)^2 / 2}{m \cdot 1 + \frac{1}{3} \cdot 1 \cdot (m \delta_{\e,d} / d)}\right) \leq 4 d \, M_d \, e^{-\delta_{\e,d}^2 m / (4 d^2)}.
    \end{equation}
    If we take a sequence $\e = \e(m)$ that goes to $0$ as $m\to \infty$ slowly enough that $1 \geq \delta_{\e(m),d} \geq m^{-1/4}$ (for example), then the bound \eqref{eq:prop:uniform.strong.consistency.end} is $\oo(m^{-1})$.
\end{proof}

\begin{proof}[\bf Proof of Theorem~\ref{thm:bias.var}]
    The expression for the bias of $F_{n,m}^{\star}(\bb{x})$ just follows from Proposition~\ref{prop:uniform.strong.consistency} and the fact that
    \begin{equation}
        \EE\big[F_{n,m}^{\star}(\bb{x})\big] = F_m^{\star}(\bb{x}), \quad \text{for all } \bb{x}\in \mathcal{S}.
    \end{equation}
    To estimate the variance of $F_{n,m}^{\star}(\bb{x})$, note that
    \begin{equation}\label{eq:hat.F.as.mean.Z.i.m}
        F_{n,m}^{\star}(\bb{x}) - F_m^{\star}(\bb{x}) = \sum_{\bb{k}\in \N_0^d \cap m\mathcal{S}} (F_n(\bb{k} / m) - F(\bb{k} / m)) P_{\bb{k},m}(\bb{x}) = \frac{1}{n} \sum_{i=1}^n Z_{i,m},
    \end{equation}
    where
    \begin{equation}\label{eq:def.Z.i.m}
        Z_{i,m} \leqdef \sum_{\bb{k}\in \N_0^d \cap m\mathcal{S}} \big(\ind_{(-\bb{\infty},\frac{\bb{k}}{m}]}(\bb{X}_i) - F(\bb{k} / m)\big) P_{\bb{k},m}(\bb{x}), \quad i\in \{1,\dots,n\}.
    \end{equation}

    For every $m$, the random variables $Z_{1,m}, \dots, Z_{n,m}$ are i.i.d.\ and centered, so that
    \begin{equation}
        \VV(F_{n,m}^{\star}(\bb{x})) = n^{-1} \, \EE[Z_{1,m}^2] = n^{-1} \hspace{0.2mm} \Bigg\{\sum_{\bb{k},\bb{\ell}\in \N_0^d \cap m\mathcal{S}} \hspace{-3mm} F((\bb{k} \wedge \bb{\ell}) / m) P_{\bb{k},m}(\bb{x}) P_{\bb{\ell}\hspace{-0.15mm},m}(\bb{x}) - \big(F_m^{\star}(\bb{x})\big)^2\Bigg\},
    \end{equation}
    where $\bb{k} \wedge \bb{\ell} \leqdef (k_1 \wedge \ell_1, \dots, k_d \wedge \ell_d)^{\top}$.
    Using the expansion in \eqref{eq:prop:uniform.strong.consistency.begin} together with Proposition~\ref{prop:uniform.strong.consistency}, the above is
    \begin{equation}\label{eq:thm:bias.var.Taylor.expansion}
        \begin{aligned}
            n^{-1} \, \cdot ~
            &\Bigg\{F(\bb{x}) (1 - F(\bb{x})) + \OO(m^{-1}) + \sum_{i=1}^d \frac{\partial}{\partial x_i} F(\bb{x}) \hspace{-1mm} \sum_{\bb{k},\bb{\ell}\in \N_0^d \cap m\mathcal{S}} \Big(\frac{k_i \wedge \ell_i}{m} - x_i\Big) P_{\bb{k},m}(\bb{x}) P_{\bb{\ell}\hspace{-0.15mm},m}(\bb{x}) \Bigg. \\[-2mm]
            &\qquad\Bigg. + \sum_{i,j=1}^d \OO\bigg(\sum_{\bb{k},\bb{\ell}\in \N_0^d \cap m\mathcal{S}} \Big|\frac{k_i}{m} - x_i\Big| \Big|\frac{k_j}{m} - x_j\Big| P_{\bb{k},m}(\bb{x}) P_{\bb{\ell}\hspace{-0.15mm},m}(\bb{x})\bigg)\Bigg\}.
        \end{aligned}
    \end{equation}
    The double sum on the first line inside the braces is estimated in \eqref{eq:lem:technical.sums.R.claim.2} of Lemma~\ref{lem:technical.sums.R} and shown to be equal to $-m^{-1/2} \sqrt{x_i (1 - x_i) / \pi} + \oo_{\bb{x}}(m^{-1/2})$, for all $\bb{x}\in \mathrm{Int}(\mathcal{S})$.
    By Cauchy-Schwarz, the identity \eqref{eq:var.multinomial.identity}, and the fact that $\sum_{\bb{\ell}\in \N_0^d \cap m\mathcal{S}} P_{\bb{\ell}\hspace{-0.15mm},m}(\bb{x}) = 1$, the double sum inside the big $\OO$ term is
    \begin{equation}\label{eq:thm:bias.var.end.1}
        \leq \max_{i\in \{1,\dots,n\}} \sum_{\bb{k},\bb{\ell}\in \N_0^d \cap m\mathcal{S}} \Big|\frac{k_i}{m} - x_i\Big|^2 P_{\bb{k},m}(\bb{x}) P_{\bb{\ell}\hspace{-0.15mm},m}(\bb{x}) \leq \frac{1}{m} \max_{i\in \{1,\dots,n\}} x_i (1 - x_i) \leq \frac{1}{4m}.
    \end{equation}
    This ends the proof.
\end{proof}

\begin{proof}[\bf Proof of Theorem~\ref{thm:MISE.optimal}]
    By \eqref{eq:thm:bias.var.Taylor.expansion}, \eqref{eq:thm:bias.var.end.1} and \eqref{eq:thm:bias.var.eq.bias}, we have
    \begin{align}
        \mathrm{MISE}(F_{n,m}^{\star})
        &= \int_{\mathcal{S}} \left(\VV(F_{n,m}^{\star}(\bb{x})) + \BB[F_{n,m}^{\star}(\bb{x})]^2\right) \rd \bb{x} \notag \\
        &= n^{-1} \Bigg\{\int_{\mathcal{S}} F(\bb{x}) (1 - F(\bb{x})) \rd \bb{x} + \OO(m^{-1}) + \sum_{i=1}^d \int_{\mathcal{S}} \frac{\partial}{\partial x_i} F(\bb{x}) \hspace{-1mm} \sum_{\bb{k},\bb{\ell}\in \N_0^d \cap m\mathcal{S}} \Big(\frac{k_i \wedge \ell_i}{m} - x_i\Big) P_{\bb{k},m}(\bb{x}) P_{\bb{\ell}\hspace{-0.15mm},m}(\bb{x}) \rd \bb{x}\Bigg\} \\[-1mm]
        &\quad+ m^{-2} \int_{\mathcal{S}} B^2(\bb{x}) \rd \bb{x} + \oo(m^{-2}).
    \end{align}
    By the assumption \eqref{eq:assump:F}, the partial derivatives $\big(\frac{\partial}{\partial x_i} F\big)_{i=1}^d$ are bounded on $\mathcal{S}$, so Lemma~\ref{lem:technical.sums.R} and the bounded convergence theorem imply
    \vspace{-2mm}
    \begin{align}
        \mathrm{MISE}(F_{n,m}^{\star})
        &= n^{-1} \int_{\mathcal{S}} F(\bb{x}) (1 - F(\bb{x})) \rd \bb{x} - n^{-1} m^{-1/2} \int_{\mathcal{S}} \sum_{i=1}^d \frac{\partial}{\partial x_i} F(\bb{x}) \sqrt{\frac{x_i (1 - x_i)}{\pi}} \, \rd \bb{x} \notag \\
        &\quad+  m^{-2} \int_{\mathcal{S}} B^2(\bb{x}) \rd \bb{x} + \oo(n^{-1} m^{-1/2}) + \oo(m^{-2}).
    \end{align}
    This ends the proof.
\end{proof}

\begin{proof}[\bf Proof of Theorem~\ref{thm:asymptotic.normality}]
    Recall from \eqref{eq:hat.F.as.mean.Z.i.m} that $F_{n,m}^{\star}(\bb{x}) - F_m^{\star}(\bb{x}) = \frac{1}{n} \sum_{i=1}^n Z_{i,m}$ where the $Z_{i,m}$'s are i.i.d.\ and centered random variables.
    Therefore, it suffices to show the following Lindeberg condition for double arrays (see, e.g., Section~1.9.3. in \cite{MR0595165}):
    For every $\e > 0$,
    \begin{equation}\label{eq:thm:asymptotic.normality.Lindeberg.condition}
        s_m^{-2} \, \EE\big[|Z_{1,m}|^2 \ind_{\{|Z_{1,m}| > \e n^{1/2} s_m\}}\big] \longrightarrow 0, \quad n\to \infty.
    \end{equation}
    where $s_m^2 \leqdef \EE\big[|Z_{1,m}|^2\big]$ and where $m = m(n)\to \infty$.
    But this follows from the fact that $|Z_{1,m}| \leq 2$ for all $m$, and $s_m = (n \VV(F_{n,m}^{\star}))^{1/2} \to \sigma(\bb{x})$, $~n\to \infty$, by Theorem~\ref{thm:bias.var}.
\end{proof}

Before proving Theorem~\ref{thm:Theorem.2.1.Babu.Canty.Chaubey}, we need the following lemma.
It is an extension of Lemma~2.2 in \cite{MR2270097}.

\begin{lemma}\label{lem:generalization.Lemma.2.2.Babu.Canty.Chaubey}
    Let $F$ be Lipschitz continuous on $\mathcal{S}$, and let
    \begin{equation}\label{eq:lem:generalization.Lemma.2.2.Babu.Canty.Chaubey.def.N}
        N_{\bb{x},m} \leqdef \left\{\bb{k}\in \N_0^d \cap m\mathcal{S}: \max_{1 \leq i \leq d} \Big|\frac{k_i}{m} - x_i\Big| \leq \alpha_m\right\}.
    \end{equation}
    (You can think of $N_{\bb{x},m}$ as the bulk of the $\mathrm{Multinomial}\hspace{0.2mm}(m,\bb{x})$ distribution; the contributions coming from outside the bulk are small for appropriate $\alpha_m$'s.)
    Then, for all $m \geq 3$ that satisfy $m^{-1} \leq \beta_{n,m} \leq \alpha_m$, we have, as $n\to\infty$,
    \begin{equation}\label{eq:lem:generalization.Lemma.2.2.Babu.Canty.Chaubey.def.H}
        \sup_{\bb{x}\in \mathrm{Int}(\mathcal{S})} \max_{\bb{k}\in N_{\bb{x},m}} \big|F_n(\bb{k} / m) - F(\bb{k} / m) - F_n(\bb{x}) + F(\bb{x})\big| = \OO(\beta_{n,m}), \quad \text{a.s.}
    \end{equation}
\end{lemma}

\begin{proof}[\bf Proof of Lemma~\ref{lem:generalization.Lemma.2.2.Babu.Canty.Chaubey}]
    For all $\bb{k}\in N_{\bb{x},m}$, we have
    \begin{align}\label{eq:lem:generalization.Lemma.2.2.Babu.Canty.Chaubey.begin}
        &\big|F_n(\bb{k} / m) - F(\bb{k} / m) - F_n(\bb{x}) + F(\bb{x})\big| \notag \\[1mm]
        &\leq \sum_{\nu=1}^d \bigg| \, F_n\Big(\frac{k_1}{m},\dots,\frac{k_{\nu-1}}{m},\frac{k_{\nu}}{m},x_{\nu+1},\dots,x_d\Big) - F\Big(\frac{k_1}{m},\dots,\frac{k_{\nu-1}}{m},\frac{k_{\nu}}{m},x_{\nu+1},\dots,x_d\Big) \bigg. \notag \\[-2mm]
        &\hspace{10mm}~~ \bigg. - F_n\Big(\frac{k_1}{m},\dots,\frac{k_{\nu-1}}{m},x_{\nu},x_{\nu+1},\dots,x_d\Big) + F\Big(\frac{k_1}{m},\dots,\frac{k_{\nu-1}}{m},x_{\nu},x_{\nu+1},\dots,x_d\Big) \, \bigg| \notag \\
        &\leq \sum_{\nu=1}^d \, \max_{\substack{i,j\in \N_0 \,: \\ |i - j| \beta_{n,m} \leq 3 \alpha_m}} \bigg| \, F_n\Big(\frac{k_1}{m},\dots,\frac{k_{\nu-1}}{m},j \beta_{n,m},x_{\nu+1},\dots,x_d\Big) - F\Big(\frac{k_1}{m},\dots,\frac{k_{\nu-1}}{m},j \beta_{n,m},x_{\nu+1},\dots,x_d\Big) \bigg. \notag \\[-3mm]
        &\hspace{25mm}~~ \bigg.- F_n\Big(\frac{k_1}{m},\dots,\frac{k_{\nu-1}}{m},i \beta_{n,m},x_{\nu+1},\dots,x_d\Big) + F\Big(\frac{k_1}{m},\dots,\frac{k_{\nu-1}}{m},i \beta_{n,m},x_{\nu+1},\dots,x_d\Big) \, \bigg| + \OO(\beta_{n,m}),
    \end{align}
    where the last inequality comes from our assumption that $F$ is Lipschitz continuous.
    The discretization is illustrated in Fig.~\ref{fig:discretization} below.

    \vspace{-2mm}
    \begin{figure}[ht]
        \centering
        \includegraphics{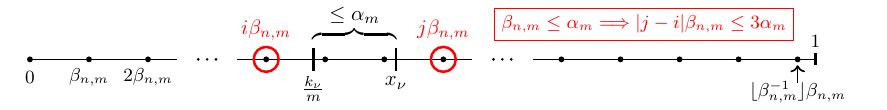}
        \caption{Illustration of the discretization in the proof of Lemma~\ref{lem:generalization.Lemma.2.2.Babu.Canty.Chaubey}. Under the assumption $\beta_{n,m} \leq \alpha_m$, we see that if $|k_{\nu} / m - x_{\nu}| \leq \alpha_m$, then we can select discrete points $i \beta_{n,m}$ and $j \beta_{n,m}$ such that the interval $[i \beta_{n,m}, j \beta_{n,m}]$ has length at most $3 \alpha_m$ and covers the interval $[k_{\nu} / m, x_{\nu}]$.}
        \label{fig:discretization}
    \end{figure}

    \noindent
    Assuming $\ell_{\nu} \beta_{n,m} < y_{\nu} \leq \ell_{\nu}^+ \beta_{n,m}$, for all $\nu = 1,\dots,d$, with the notation $\ell_{\nu}^+ \leqdef \ell_{\nu} + 1$, we have
    \begin{align}
        &\Big| \, F_n(y_1,\dots,y_{\nu-1},j \beta_{n,m},y_{\nu+1},\dots,y_d) - F(y_1,\dots,y_{\nu-1},j \beta_{n,m},y_{\nu+1},\dots,y_d) \Big. \\[-2mm]
        &\Big. ~~ - F_n(y_1,\dots,y_{\nu-1},i \beta_{n,m},y_{\nu+1},\dots,y_d) + F(y_1,\dots,y_{\nu-1},i \beta_{n,m},y_{\nu+1},\dots,y_d) \, \Big| \notag \\
        &\leq ~ \Big| \, F_n(\ell_1^+ \beta_{n,m},\dots,\ell_{\nu-1}^+ \beta_{n,m},j \beta_{n,m},\ell_{\nu+1}^+ \beta_{n,m},\dots,\ell_d^+ \beta_{n,m})
                - F(\ell_1 \beta_{n,m},\dots,\ell_{\nu-1} \beta_{n,m},j \beta_{n,m},\ell_{\nu+1} \beta_{n,m},\dots,\ell_d \beta_{n,m}) \Big. \\[-2mm]
        &\quad\Big. \quad - F_n(\ell_1 \beta_{n,m},\dots,\ell_{\nu-1} \beta_{n,m},i \beta_{n,m},\ell_{\nu+1} \beta_{n,m},\dots,\ell_d \beta_{n,m})
                + F(\ell_1^+ \beta_{n,m},\dots,\ell_{\nu-1}^+ \beta_{n,m},i \beta_{n,m},\ell_{\nu+1}^+ \beta_{n,m},\dots,\ell_d^+ \beta_{n,m}) \, \Big| \notag \\
        &\leq ~ \Big| \, F_n(\ell_1^+ \beta_{n,m},\dots,\ell_{\nu-1}^+ \beta_{n,m},j \beta_{n,m},\ell_{\nu+1}^+ \beta_{n,m},\dots,\ell_d^+ \beta_{n,m})
                - F_n(\ell_1 \beta_{n,m},\dots,\ell_{\nu-1} \beta_{n,m},i \beta_{n,m},\ell_{\nu+1} \beta_{n,m},\dots,\ell_d \beta_{n,m}) \Big. \\[-2mm]
        &\quad\Big. \quad - F(\ell_1^+ \beta_{n,m},\dots,\ell_{\nu-1}^+ \beta_{n,m},j \beta_{n,m},\ell_{\nu+1}^+ \beta_{n,m},\dots,\ell_d^+ \beta_{n,m})
                + F(\ell_1 \beta_{n,m},\dots,\ell_{\nu-1} \beta_{n,m},i \beta_{n,m},\ell_{\nu+1} \beta_{n,m},\dots,\ell_d \beta_{n,m}) \, \Big| \\[-0.5mm]
        &\quad+ \OO(\beta_{n,m}) \notag \\[0.5mm]
        &\leq ~ D_{n,m,\nu} + \OO(\beta_{n,m}),
    \end{align}
    where
    \vspace{-13mm}
    \begin{equation}
        D_{n,m,\nu} \leqdef \max_{\substack{i,j\in \N_0 \,: \\ |i - j| \beta_{n,m} \leq 3 \alpha_m}} \max_{\substack{0 \leq k_p \leq 1 + \lfloor \beta_{n,m}^{-1} \rfloor \\ p\in \{1,\dots,d\}\backslash\{\nu\}}}
            \begin{array}{l}
            \\
            \\
            \\[2mm]
                \Big| \, F_n(k_1 \beta_{n,m},\dots,k_{\nu-1} \beta_{n,m},j \beta_{n,m},k_{\nu+1} \beta_{n,m},\dots,k_d \beta_{n,m}) \\[1mm]
                ~~ - F_n(k_1 \beta_{n,m},\dots,k_{\nu-1} \beta_{n,m},i \beta_{n,m},k_{\nu+1} \beta_{n,m},\dots,k_d \beta_{n,m}) \\[1mm]
                ~~ - F(k_1 \beta_{n,m},\dots,k_{\nu-1} \beta_{n,m},j \beta_{n,m},k_{\nu+1} \beta_{n,m},\dots,k_d \beta_{n,m}) \\
                ~~ + F(k_1 \beta_{n,m},\dots,k_{\nu-1} \beta_{n,m},i \beta_{n,m},k_{\nu+1} \beta_{n,m},\dots,k_d \beta_{n,m}) ~ \Big|.
            \end{array}
    \end{equation}
    Therefore, by \eqref{eq:lem:generalization.Lemma.2.2.Babu.Canty.Chaubey.begin}, it follows that
    \begin{equation}\label{eq:lem:generalization.Lemma.2.2.Babu.Canty.Chaubey.bound.on.H}
        \sup_{\bb{x}\in \mathrm{Int}(\mathcal{S})} \max_{\bb{k}\in N_{\bb{x},m}} \big|F_n(\bb{k} / m) - F(\bb{k} / m) - F_n(\bb{x}) + F(\bb{x})\big| \leq \sum_{\nu=1}^d D_{n,m,\nu} + \OO(\beta_{n,m}).
    \end{equation}
    We want to apply a concentration bound on each $D_{n,m,\nu}$, $\nu = 1,\dots,d$.
    By Bernstein's inequality (see, e.g., Lemma~\ref{lem:Bernstein.inequality}), note that for any $\rho > 0$, any $y_1,\dots,y_{\nu-1},y_{\nu+1},\dots,y_d\in \R$ and any $i,j\in \N_0$ such that $|i - j| \, \beta_{n,m} \leq 3 \alpha_m$, we have, assuming $\beta_{n,m} \leq \alpha_m$,
    \begin{equation}\label{eq:lem:generalization.Lemma.2.2.Babu.Canty.Chaubey.Bernstein.ineq}
        \PP\left(\hspace{-1mm}
                \begin{array}{l}
                    \big| \, F_n(y_1,\dots,y_{\nu-1},j \beta_{n,m},y_{\nu+1},\dots,y_d) \big. \\[0.5mm]
                    ~~ - F_n(y_1,\dots,y_{\nu-1},i \beta_{n,m},y_{\nu+1},\dots,y_d) \\[0.5mm]
                    ~~ - F(y_1,\dots,y_{\nu-1},j \beta_{n,m},y_{\nu+1},\dots,y_d) \\[0.5mm]
                    \big. ~~ + F(y_1,\dots,y_{\nu-1},i \beta_{n,m},y_{\nu+1},\dots,y_d) ~ \big| \geq \rho \beta_{n,m}
                \end{array}
            \hspace{-1mm}\right) \leq 2 \exp\left(-\frac{n^2 \rho^2 \beta_{n,m}^2 / 2}{n \cdot C \cdot 3 \alpha_m + \frac{1}{3} \cdot 1 \cdot n \rho \beta_{n,m}}\right)
            \leq 2 n^{-\rho^2 / (8C)},
    \end{equation}
    where $C\geq \rho$ is a Lipschitz constant for $F$.
    A union bound over $i$, $j$ and the $k_p$'s then yields
    \begin{equation}\label{eq:large.deviation.D.n.m}
        \PP(D_{n,m,\nu} > \rho \beta_{n,m}) \leq \big(2 + \lfloor \beta_{n,m}^{-1} \rfloor\big)^{2 + (d-1)} \cdot 2 n^{-\rho^2 / (8C)}, \quad \nu\in \{1,\dots,d\}.
    \end{equation}
    Since $\beta_{n,m}^{-1} \leq n^2$ (indeed, our assumption $m^{-1} \leq \beta_{n,m}$ implies $\beta_{n,m}^{-1} \leq m$, and the second assumption $\beta_{n,m} \leq \alpha_m$ implies $m \leq n^2$ when $m \geq 3$), we can choose a constant $\rho = \rho(C,d) > 0$ large enough that the right-hand side of \eqref{eq:large.deviation.D.n.m} is summable in $n$, in which case the Borel-Cantelli lemma implies $D_{n,m,\nu} = \OO(\beta_{n,m})$ a.s., as $n\to\infty$.
    The conclusion follows from the bound in \eqref{eq:lem:generalization.Lemma.2.2.Babu.Canty.Chaubey.bound.on.H}.
\end{proof}

\begin{proof}[\bf Proof of Theorem~\ref{thm:Theorem.2.1.Babu.Canty.Chaubey}]
    By the triangle inequality and $\sum_{\bb{k}\in \N_0^d \cap m\mathcal{S}} P_{\bb{k},m}(\bb{x}) = 1$, we have
    \begin{equation}
        \|F_{n,m}^{\star} - F\|_{\infty} \leq \|F_{n,m}^{\star} - F_m^{\star}\|_{\infty} + \|F_m^{\star} - F\|_{\infty} \leq \|F_n - F\|_{\infty} + \|F_m^{\star} - F\|_{\infty}.
    \end{equation}
    The first term on the right-hand side goes to $0$ by the Glivenko-Cantelli theorem, and the second term goes to $0$ by a weak version of Proposition~\ref{prop:uniform.strong.consistency} where $F$ is only assumed to be continuous on $\mathcal{S}$. (To be more precise, after the first equality in \eqref{eq:prop:uniform.strong.consistency.Taylor}, use the uniform continuity of $F$ inside the bulk $N_{\bb{x},m}$ and a concentration bound to show that the contributions coming from outside the bulk are negligible. Alternatively, see Theorem~1.1.1 in \cite{MR0864976}.) This proves \eqref{eq:thm:Theorem.2.1.Babu.Canty.Chaubey.eq.1}.

    For the remainder of the proof, we study the closeness between $F_{n,m}^{\star}$ and the empirical cumulative distribution function $F_n$.
    We assume that $F$ is differentiable on $\mathcal{S}$ and its partial derivatives are Lipschitz continuous.
    By the triangle inequality,
    \begin{equation}\label{eq:thm:Theorem.2.2.Babu.Canty.Chaubey.begin}
        \begin{aligned}
            \|F_{n,m}^{\star} - F_n\|_{\infty}
            &\leq \Bigg\|\sum_{\bb{k}\in N_{\bb{x},m}} (F_n(\bb{k} / m) - F(\bb{k} / m) - F_n(\, \cdot\, ) + F(\, \cdot\, )) P_{\bb{k},m}(\, \cdot\, ) \, \Bigg\|_{\infty} \\
            &+ \Bigg\|\sum_{\bb{k}\in (\N_0^d \cap m\mathcal{S}) \backslash N_{\bb{x},m}} \hspace{-4mm} (F_n(\bb{k} / m) - F(\bb{k} / m) - F_n(\, \cdot\, ) + F(\, \cdot\, )) P_{\bb{k},m}(\, \cdot\, ) \, \Bigg\|_{\infty} + \Bigg\|\sum_{\bb{k}\in \N_0^d \cap m\mathcal{S}} (F(\bb{k} / m) - F(\, \cdot\, )) P_{\bb{k},m}(\, \cdot\, ) \, \Bigg\|_{\infty}.
        \end{aligned}
    \end{equation}
    The first norm is $\OO(\beta_{n,m})$ by Lemma~\ref{lem:generalization.Lemma.2.2.Babu.Canty.Chaubey} (assuming $m \geq 3$ and $m^{-1} \leq \beta_{n,m} \leq \alpha_m$).
    If $B_i\sim \text{Binomial}\hspace{0.2mm}(m,x_i)$, then a union bound, the fact that $\max_{\bb{k}} \|F_n(\bb{k} / m) - F(\, \cdot\, )\|_{\infty} \leq 1$, and Bernstein's inequality (see, e.g., Lemma~\ref{lem:Bernstein.inequality}), yield that the second norm in \eqref{eq:thm:Theorem.2.2.Babu.Canty.Chaubey.begin} is
    \begin{align}\label{eq:thm:Theorem.2.2.Babu.Canty.Chaubey.begin.bound.second.term}
        \leq 2 \cdot \max_{\bb{x}\in \mathcal{S}} \sum_{i=1}^d \PP(|B_i - m x_i| \geq m \alpha_m)
        \leq 2 \cdot \max_{\bb{x}\in \mathcal{S}} \, d \cdot 2 \exp\left(-\frac{m^2 \alpha_m^2 / 2}{m \cdot x_i (1 - x_i) + \frac{1}{3} \cdot 1 \cdot m \alpha_m}\right) \leq 4 \, d \, m^{-1} \leq 4 \, d \, \beta_{n,m}.
    \end{align}
    For the third norm in \eqref{eq:thm:Theorem.2.2.Babu.Canty.Chaubey.begin}, the Lipschitz continuity of the partial derivatives $\big(\tfrac{\partial}{\partial x_i} F\big)_{i=1}^d$ implies that, uniformly for $\bb{x}\in \mathcal{S}$,
    \begin{equation}\label{eq:thm:Theorem.2.2.Babu.Canty.Chaubey.end.third.term}
        F(\bb{k} / m) - F(\bb{x}) = \sum_{i=1}^d \Big(\frac{k_i}{m} - x_i\Big) \frac{\partial}{\partial x_i} F(\bb{x}) + \sum_{i,j=1}^d \OO\bigg(\Big|\frac{k_i}{m} - x_i\Big| \Big|\frac{k_j}{m} - x_j\Big|\bigg).
    \end{equation}
    After multiplying \eqref{eq:thm:Theorem.2.2.Babu.Canty.Chaubey.end.third.term} by $P_{\bb{k},m}(\bb{x})$, summing over all $\bb{k}\in \N_0^d \cap m\mathcal{S}$ and applying the Cauchy-Schwarz inequality, the result is uniformly bounded by $\OO(m^{-1})$ because of the identities \eqref{eq:mean.multinomial.identity} and \eqref{eq:var.multinomial.identity}.
    Since we assumed $m^{-1} \leq \beta_{n,m}$, this proves \eqref{eq:thm:Theorem.2.1.Babu.Canty.Chaubey.eq.2}.
\end{proof}

\section{Proof of the results for the density estimator \texorpdfstring{$\hat{f}_{n,m}$}{hat(f)\_\{n,m\}}}\label{sec:proofs.results.density.estimator}

\begin{proof}[\bf Proof of Proposition~\ref{prop:uniform.strong.consistency.density}]
    Let $\bb{x}\in \mathcal{S}$.
    We follow the proof of Proposition~\ref{prop:uniform.strong.consistency}.
    By using Taylor expansions for any $\bb{k}$ such that $\|\bb{k} / m - \bb{x}\|_1 = \oo(1)$, we obtain
    \begin{align}\label{thm:Theorem.3.2.and.3.3.Babu.Canty.Chaubey.second.asymp.begin.1}
        &m^d \int_{\left(\frac{\bb{k}}{m}, \frac{\bb{k} + 1}{m}\right]} \hspace{-0.5mm} f(\bb{y}) \rd \bb{y} - f(\bb{x}) = f(\bb{k} / m) - f(\bb{x}) + \frac{1}{2m} \sum_{i=1}^d \frac{\partial}{\partial x_i} f(\bb{k} / m) + \OO(m^{-2}) \notag \\[-0.5mm]
        &\hspace{-1mm}= \frac{1}{m} \sum_{i=1}^d (k_i - m x_i) \frac{\partial}{\partial x_i} f(\bb{x}) + \frac{1}{2m} \sum_{i=1}^d \frac{\partial}{\partial x_i} f(\bb{x}) + \oo(m^{-1}) + \frac{1}{2 m^2} \sum_{i,j=1}^d (k_i - m x_i) (k_j - m x_j) \frac{\partial^2}{\partial x_i \partial x_j} f(\bb{x}) (1 + \oo(1)) \notag \\
        &\hspace{-1mm}= \frac{1}{m} \sum_{i=1}^d (k_i - (m - 1) x_i) \frac{\partial}{\partial x_i} f(\bb{x}) + \frac{1}{m} \sum_{i=1}^d \Big(\frac{1}{2} - x_i\Big) \, \frac{\partial}{\partial x_i} f(\bb{x}) + \frac{1}{2} \sum_{i,j=1}^d \Big(\frac{k_i}{m} - x_i\Big) \Big(\frac{k_j}{m} - x_j\Big) \frac{\partial^2}{\partial x_i \partial x_j} f(\bb{x}) (1 + \oo(1)) + \oo(m^{-1}).
    \end{align}
    If we multiply the last expression by $m^{-d} \cdot \frac{(m - 1 + d)!}{(m - 1)!} P_{\bb{k},m-1}(\bb{x})$ and sum over all $\bb{k}\in \N_0^d \cap (m-1)\mathcal{S}$, then the identities \eqref{eq:mean.multinomial.identity} and \eqref{eq:var.multinomial.identity} yield
    \begin{equation}\label{thm:Theorem.3.2.and.3.3.Babu.Canty.Chaubey.second.asymp.end}
        f_m(\bb{x}) - \left(1 + \frac{d (d - 1)}{2m}\right) f(\bb{x}) = 0 + \frac{1}{m} \sum_{i=1}^d \Big(\frac{1}{2} - x_i\Big) \, \frac{\partial}{\partial x_i} f(\bb{x}) + \frac{1}{2m} \sum_{i,j=1}^d \big(x_i \ind_{\{i = j\}} - x_i x_j\big) \frac{\partial^2}{\partial x_i \partial x_j} f(\bb{x}) + \oo(m^{-1}),
    \end{equation}
    assuming that $\|\bb{k} / m - \bb{x}\|_1 = \oo(1)$ decays slowly enough to $0$ that the contributions coming from outside the bulk are negligible (exactly as we did in \eqref{eq:prop:uniform.strong.consistency.end.next.2}).
    This ends the proof.
\end{proof}

\begin{proof}[\bf Proof of Theorem~\ref{thm:bias.var.density}]
    The expression for the bias follows from Proposition~\ref{prop:uniform.strong.consistency.density} and the fact that $\EE[\hat{f}_{n,m}(\bb{x})] = f_m(\bb{x})$ for all $\bb{x}\in \mathcal{S}$.
    In order to compute the asymptotics of the variance, we only assume that $f$ is Lipschitz continuous on $\mathcal{S}$.
    First, note that
    \begin{equation}\label{eq:equality.T.m.n}
        \hat{f}_{n,m}(\bb{x}) - f_m(\bb{x}) = \frac{(m - 1 + d)!}{(m - 1)!} \cdot \frac{1}{n} \sum_{i=1}^n Y_{i,m},
    \end{equation}
    where
    \begin{equation}\label{eq:def.Y.i.m}
        Y_{i,m} \leqdef \hspace{-2mm}\sum_{\bb{k}\in \N_0^d \cap (m - 1)\mathcal{S}} \left[\ind_{\left(\frac{\bb{k}}{m}, \frac{\bb{k} + 1}{m}\right]}(\bb{X}_i) - \int_{\left(\frac{\bb{k}}{m}, \frac{\bb{k} + 1}{m}\right]} \hspace{-0.5mm} f(\bb{y}) \rd \bb{y}\right] P_{\bb{k},m-1}(\bb{x}), \quad i\in \{1,\dots,n\}.
    \end{equation}
    For every $m$, the random variables $Y_{1,m}, \dots, Y_{n,m}$ are i.i.d.\ and centered, so
    \begin{equation}\label{eq:var.to.second.moment.Y.1.m}
        \VV(\hat{f}_{n,m}(\bb{x})) = n^{-1} \left(\frac{(m - 1 + d)!}{(m - 1)!}\right)^2 \, \EE[Y_{1,m}^2],
    \end{equation}
    and it is easy to see that
    \begin{equation}\label{eq:lem:Lemma.3.2.Babu.Canty.Chaubey.begin.var.Y.1.m}
        \EE[Y_{1,m}^2] = \hspace{-2mm}\sum_{\bb{k}\in \N_0^d \cap (m-1)\mathcal{S}} \int_{\left(\frac{\bb{k}}{m}, \frac{\bb{k} + 1}{m}\right]} \hspace{-0.5mm} f(\bb{y}) \rd \bb{y} \, P_{\bb{k},m-1}^2(\bb{x}) - \left(\frac{(m - 1)!}{(m - 1 + d)!} \, f_m(\bb{x})\right)^2.
    \end{equation}
    The second term on the right-hand side of \eqref{eq:lem:Lemma.3.2.Babu.Canty.Chaubey.begin.var.Y.1.m} is $\OO(m^{-2d})$ since the Lipschitz continuity of $f$ and the identity \eqref{eq:var.multinomial.identity} together imply that, uniformly for $\bb{x}\in \mathcal{S}$,
    \begin{align}\label{eq:thm:Theorem.3.1.Babu.Canty.Chaubey.control.T.m}
        f_m(\bb{x}) - f(\bb{x})
        &= \sum_{i=1}^d \OO\left(\sum_{\bb{k}\in \N_0^d \cap (m - 1)\mathcal{S}} \Big|\frac{k_i}{m} - x_i\Big| P_{\bb{k},m-1}(\bb{x})\right) + \OO(m^{-1}) = \OO(m^{-1/2}).
    \end{align}
    For the first term on the right-hand side of \eqref{eq:lem:Lemma.3.2.Babu.Canty.Chaubey.begin.var.Y.1.m}, the Lipschitz continuity of $f$ implies,
    \begin{align}\label{eq:lem:Lemma.3.2.Babu.Canty.Chaubey.middle.a.l.m}
        m^d \int_{\left(\frac{\bb{k}}{m}, \frac{\bb{k} + 1}{m}\right]} \hspace{-0.5mm} f(\bb{y}) \rd \bb{y}
        = f(\bb{k} / m) + \OO(m^{-1})
        = f(\bb{x}) + \OO(m^{-1}) + \sum_{i=1}^d \OO\bigg(\Big|\frac{k_i}{m} - x_i\Big|\bigg),
    \end{align}
    and by the Cauchy-Schwarz inequality, the identity \eqref{eq:var.multinomial.identity} and \eqref{eq:lem:technical.sums.eq.2} in Lemma~\ref{lem:technical.sums}, we have, for all $i\in \{1,\dots,d\}$,
    \begin{equation}\label{eq:lem:Lemma.3.2.Babu.Canty.Chaubey.middle.a.l.m.next}
        \sum_{\bb{k}\in \N_0^d \cap (m-1)\mathcal{S}} \hspace{-1mm} \Big|\frac{k_i}{m} - x_i\Big| \, P_{\bb{k},m-1}^2(\bb{x})
        \leq \sqrt{\sum_{\bb{k}\in \N_0^d \cap (m-1)\mathcal{S}} \hspace{-1mm} \Big|\frac{k_i}{m} - x_i\Big|^2 P_{\bb{k},m-1}(\bb{x})} \sqrt{\sum_{\bb{k}\in \N_0^d \cap (m-1)\mathcal{S}} \hspace{-1mm} P_{\bb{k},m-1}^3(\bb{x})} = \OO(m^{-1/2-d/2}).
    \end{equation}
    Putting \eqref{eq:thm:Theorem.3.1.Babu.Canty.Chaubey.control.T.m}, \eqref{eq:lem:Lemma.3.2.Babu.Canty.Chaubey.middle.a.l.m} and \eqref{eq:lem:Lemma.3.2.Babu.Canty.Chaubey.middle.a.l.m.next} together in \eqref{eq:lem:Lemma.3.2.Babu.Canty.Chaubey.begin.var.Y.1.m} yields
    \begin{equation}\label{eq:lem:Lemma.3.2.Babu.Canty.Chaubey.end}
        m^{3d/2} \, \EE[Y_{1,m}^2] = (f(\bb{x}) + \OO(m^{-1})) \left[m^{d/2} \hspace{-2mm}\sum_{\bb{k}\in \N_0^d \cap (m-1)\mathcal{S}} \hspace{-2mm}P_{\bb{k},m-1}^2(\bb{x})\right] + \OO(m^{-1/2}).
    \end{equation}
    The result follows from \eqref{eq:var.to.second.moment.Y.1.m}, \eqref{eq:lem:Lemma.3.2.Babu.Canty.Chaubey.end} and \eqref{eq:lem:technical.sums.eq.1} in Lemma~\ref{lem:technical.sums}.
\end{proof}

\begin{proof}[\bf Proof of Theorem~\ref{thm:MISE.optimal.density}]
    In Lemma~\ref{lem:lemma.4.Leblanc.2006.tech.report}, it is shown, using the duplication formula for Euler's gamma function and the Chu–Vandermonde convolution for binomial coefficients, that
    \begin{equation}\label{eq:thm:MISE.optimal.density.begin}
        m^{d/2} \int_{\mathcal{S}} \sum_{\bb{k}\in \N_0^d \cap (m-1)\mathcal{S}} \hspace{-2mm}P_{\bb{k},m-1}^2(\bb{x}) \rd \bb{x} = \int_{\mathcal{S}} \psi(\bb{x}) \rd \bb{x} + \OO(m^{-1}).
    \end{equation}
    Together with the almost-everywhere convergence in \eqref{eq:lem:technical.sums.eq.1} of Lemma~\ref{lem:technical.sums}, and the fact that $f$ is bounded, Scheff\'e's lemma (see, e.g., \cite[p.55]{MR1155402}) implies
    \begin{equation}\label{eq:thm:MISE.optimal.density.begin.next}
        m^{d/2} \int_{\mathcal{S}} \sum_{\bb{k}\in \N_0^d \cap (m-1)\mathcal{S}} \hspace{-2mm}P_{\bb{k},m-1}^2(\bb{x}) f(\bb{x}) \rd \bb{x} = \int_{\mathcal{S}} \psi(\bb{x}) f(\bb{x}) \rd \bb{x} + \oo(1).
    \end{equation}
    Therefore, by \eqref{eq:var.to.second.moment.Y.1.m}, \eqref{eq:lem:Lemma.3.2.Babu.Canty.Chaubey.end}, \eqref{eq:thm:MISE.optimal.density.begin.next} and \eqref{eq:thm:bias.var.density.eq.bias}, we have
    \begin{align}
        \mathrm{MISE}(\hat{f}_{n,m})
        &= \int_{\mathcal{S}} \left(\VV(\hat{f}_{n,m}(\bb{x})) + \BB[\hat{f}_{n,m}(\bb{x})]^2\right) \rd \bb{x} \notag \\[1mm]
        &= n^{-1} m^{d/2} \int_{\mathcal{S}} \psi(\bb{x}) f(\bb{x}) \rd \bb{x} + m^{-2} \int_{\mathcal{S}} b^2(\bb{x}) \rd \bb{x} + \oo(n^{-1} m^{d/2}) + \oo(m^{-2}).
    \end{align}
    This ends the proof.
\end{proof}

\begin{proof}[\bf Proof of Theorem~\ref{thm:Theorem.3.1.Babu.Canty.Chaubey}]
    We have already shown that $\|f_m - f\|_{\infty} = \OO(m^{-1/2})$ in \eqref{eq:thm:Theorem.3.1.Babu.Canty.Chaubey.control.T.m}.
    Next, we want to apply a concentration bound to control $\|\hat{f}_{n,m} - f_m\|_{\infty}$.
    Let
    \begin{equation}
        L_{n,m} \leqdef \max_{\bb{k}\in \N_0^d \cap (m-1)\mathcal{S}} \frac{1}{n} \sum_{i=1}^n \left(\ind_{\left(\frac{\bb{k}}{m}, \frac{\bb{k} + 1}{m}\right]}(\bb{X}_i) - \int_{\left(\frac{\bb{k}}{m}, \frac{\bb{k} + 1}{m}\right]} \hspace{-0.5mm} f(\bb{y}) \rd \bb{y}\right).
    \end{equation}
    By a union bound on $\bb{k}\in \N_0^d \cap (m - 1)\mathcal{S}$ (there are at most $m^d$ such points), and Bernstein's inequality (see, e.g., Lemma~\ref{lem:Bernstein.inequality}), we have, for all $\rho > 0$,
    \begin{equation}\label{eq:thm:Theorem.3.1.Babu.Canty.Chaubey.end}
        \PP\left(|L_{n,m}| > \rho m^{-1/2} \alpha_n\right)
        \leq m^d \cdot 2 \exp\left(-\frac{n^2 \rho^2 m^{-1} \alpha_n^2 / 2}{n \cdot c \cdot m^{-1} + \frac{1}{3} \cdot 1 \cdot n \rho m^{-1/2} \alpha_n}\right)
        \leq m^d \cdot 2 n^{-\rho^2 / (4c)},
    \end{equation}
    where the second inequality assumes that $m \leq n / \log n$ (equivalently, $\alpha_n \leq m^{-1/2}$), and $c\geq \rho$ is a Lipschitz constant for~$f$.
    If we choose $\rho = \rho(c,d) > 0$ large enough, then the right-hand side of \eqref{eq:thm:Theorem.3.1.Babu.Canty.Chaubey.end} is summable in $n$ and the Borel-Cantelli lemma implies $\|\hat{f}_{n,m} - f_m\|_{\infty} \leq m^d \, |L_{n,m}| = \OO(m^{d - 1/2} \alpha_n)$ a.s., as $n\to\infty$.
\end{proof}

\begin{proof}[\bf Proof of Theorem~\ref{thm:Theorem.3.2.and.3.3.Babu.Canty.Chaubey}]
    By \eqref{eq:equality.T.m.n}, the asymptotic normality of $n^{1/2} m^{-d/4} (\hat{f}_{n,m}(\bb{x}) - f_m(\bb{x}))$ will follow if we verify the Lindeberg condition for double arrays (see, e.g., Section~1.9.3. in \cite{MR0595165}):
    For every $\e > 0$,
    \begin{equation}\label{eq:prop:Proposition.1.Babu.Canty.Chaubey.Lindeberg.condition}
        s_m^{-2} \, \EE\big[|Y_{1,m}|^2 \ind_{\{|Y_{1,m}| > \e n^{1/2} s_m\}}\big] \longrightarrow 0, \quad n\to \infty,
    \end{equation}
    where $s_m^2 \leqdef \EE\big[|Y_{1,m}|^2\big]$ and $m = m(n) \to \infty$.
    Clearly, from \eqref{eq:def.Y.i.m},
    \begin{equation}
        |Y_{1,m}| \leq \max_{\bb{k}\in \N_0^d \cap (m-1)\mathcal{S}} 2 \, P_{\bb{k},m}(\bb{x}) = \OO(m^{-d/2}),
    \end{equation}
    and we also know that $s_m = m^{-3d/4} \sqrt{\psi(\bb{x}) f(\bb{x})} \, (1 + \oo_{\bb{x}}(1))$ when $f$ is Lipschitz continuous, by the proof of Theorem~\ref{thm:bias.var.density}.
    Therefore, we have
    \begin{equation}\label{eq:prop:Proposition.1.Babu.Canty.Chaubey.Lindeberg.condition.verify}
        \frac{|Y_{i,m}|}{n^{1/2} s_m} = \OO_{\bb{x}}(n^{-1/2} m^{3d/4} m^{-d/2}) = \OO_{\bb{x}}(n^{-1/2} m^{d/4}) \longrightarrow 0,
    \end{equation}
    whenever $n^{1/2} m^{-d/4}\to \infty$ as $m,n\to \infty$. (The bound on $|Y_{1,m}|$ in the proof of Proposition~1 in \cite{MR1910059} is suboptimal for $d = 1$. This is why we get a slightly better rate in \eqref{eq:prop:Proposition.1.Babu.Canty.Chaubey.Lindeberg.condition.verify} compared to the fourth equation on page 386 of \cite{MR1910059}.)
    Under this condition, \eqref{eq:prop:Proposition.1.Babu.Canty.Chaubey.Lindeberg.condition} holds (since for any given $\e > 0$, the indicator function is eventually equal to $0$ uniformly in $\omega\in \Omega$) and thus
    \begin{equation}
        n^{1/2} m^{-d/4} (\hat{f}_{n,m}(\bb{x}) - f_m(\bb{x}))
        = n^{1/2} m^{3d/4} (1 + \OO(m^{-1})) \cdot \frac{1}{n} \sum_{i=1}^n Y_{i,m}
        \stackrel{\mathscr{D}}{\longrightarrow} \mathcal{N}(0,f(\bb{x})\psi(\bb{x})).
    \end{equation}
    This completes the proof of Theorem~\ref{thm:Theorem.3.2.and.3.3.Babu.Canty.Chaubey}.
\end{proof}

\section{Technical lemmas and tools}\label{sec:tools}

The first lemma is a standard (but very useful) concentration bound, found for example in \cite[Corollary~2.11]{MR3185193}.

\begin{lemma}[Bernstein's inequality]\label{lem:Bernstein.inequality}
    Let $X_1, \dots, X_n$ be a sequence of independent random variables satisfying $|X_i| \leq b < \infty$.
    Then, for all $t > 0$,
    \begin{equation}
        \PP\left(\left|\sum_{i=1}^n (X_i - \EE[X_i])\right| \geq t\right) \leq 2 \, \exp\left(-\frac{t^2 / 2}{\sum_{i=1}^n \VV(X_i) + \frac{1}{3} b t}\right).
    \end{equation}
\end{lemma}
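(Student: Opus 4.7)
The proof follows the classical Chernoff--Cram\'er exponential method. First, set $S_n \leqdef \sum_{i=1}^n (X_i - \EE[X_i])$. For any $\lambda > 0$, Markov's inequality applied to $e^{\lambda S_n}$ together with independence of the $X_i$'s gives the standard reduction
\[
    \PP(S_n \geq t) \;\leq\; e^{-\lambda t}\prod_{i=1}^n \EE\big[e^{\lambda(X_i - \EE[X_i])}\big],
\]
which reduces the task to bounding the moment generating function (MGF) of a single centered bounded variable.

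The core step is the Bernstein-type MGF estimate. Writing $Y_i \leqdef X_i - \EE[X_i]$, I would Taylor-expand $e^{\lambda Y_i} - 1 - \lambda Y_i = \sum_{k \geq 2} (\lambda Y_i)^k/k!$, bound $|Y_i|^{k-2}$ using $|X_i| \leq b$, and invoke the elementary numerical inequality $k! \geq 2 \cdot 3^{k-2}$ for $k \geq 2$ (trivial by induction) to sum the resulting geometric tail. After using $\EE[Y_i^2] \leq \EE[X_i^2]$ and $1 + u \leq e^u$, this produces an estimate of the form
\[
    \EE\big[e^{\lambda Y_i}\big] \;\leq\; \exp\!\bigg(\frac{\lambda^2 \EE[X_i^2]/2}{1 - \lambda b/3}\bigg), \qquad 0 < \lambda < 3/b.
\]
Multiplying across $i$ and setting $V \leqdef \sum_{i=1}^n \EE[X_i^2]$,
\[
    \PP(S_n \geq t) \;\leq\; \exp\!\bigg(-\lambda t + \frac{\lambda^2 V/2}{1 - \lambda b/3}\bigg).
\]

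The final step is to optimize the exponent in $\lambda$. The choice $\lambda_\star = t/(V + bt/3) \in (0, 3/b)$ collapses the exponent to exactly $-t^2/\big(2(V + bt/3)\big)$, yielding the claimed one-sided bound; the factor $2$ in the statement accommodates the symmetric lower-tail event $\{S_n \leq -t\}$ via the same argument applied to $-X_i$ followed by a union bound (and absorbs any slack coming from the centering step).

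The main (and essentially only) obstacle is the MGF estimate of the second paragraph. Although the idea is textbook, the constants require careful bookkeeping---both in controlling $|Y_i|$ by an appropriate multiple of $b$ after centering and in the series estimate that produces the $1/3$ factor in the denominator. No probabilistic subtlety enters beyond the Chernoff--Cram\'er machinery; everything reduces to elementary real analysis.
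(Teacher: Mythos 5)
The paper does not prove this lemma --- it is quoted as a standard result, with a pointer to Corollary 2.11 of \cite{MR3185193} --- so there is no ``paper proof'' to match your argument against. That said, your Chernoff--Cram\'er outline is the right framework, and the optimization step (checking $\lambda_\star = t/(V+bt/3) \in (0,3/b)$ and that the exponent collapses to $-t^2/(2(V+bt/3))$) is carried out correctly. But the MGF estimate, as sketched, has a genuine gap that your own hedging (``controlling $|Y_i|$ by an appropriate multiple of $b$'') does not resolve.

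Concretely: after centering, the only almost-sure bound available is $|Y_i| = |X_i - \EE[X_i]| \leq 2b$, not $|Y_i| \leq b$. Plugging $|Y_i|^{k-2} \leq (2b)^{k-2}$ into $\sum_{k\geq 2}\lambda^k\EE[|Y_i|^k]/k!$ together with $k! \geq 2\cdot 3^{k-2}$ yields
\begin{equation*}
    \EE[e^{\lambda Y_i}] \leq \exp\!\Big(\frac{\lambda^2\EE[X_i^2]/2}{1-2\lambda b/3}\Big),
\end{equation*}
which propagates to $2bt/3$ in the final denominator --- strictly weaker than the claim, and the prefactor $2$ does not absorb the discrepancy (take $\EE[X_i^2]\to 0$ with $bt$ fixed to see the gap diverge). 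To obtain the constant $b/3$ one should not Taylor-expand $e^{\lambda Y_i}$ at all. Instead expand the \emph{uncentered} MGF: from $|X_i|\leq b$ one has $|\EE[X_i^k]|\leq b^{k-2}\EE[X_i^2]$ for $k\geq 2$, hence
\begin{equation*}
    \EE[e^{\lambda X_i}] \leq 1 + \lambda\EE[X_i] + \frac{\lambda^2\EE[X_i^2]/2}{1-\lambda b/3} \leq \exp\!\Big(\lambda\EE[X_i] + \frac{\lambda^2\EE[X_i^2]/2}{1-\lambda b/3}\Big),
\end{equation*}
and then $\EE[e^{\lambda Y_i}] = e^{-\lambda\EE[X_i]}\EE[e^{\lambda X_i}]$ cancels the linear term exactly, giving the bound you wrote down. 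So the result you state is correct, but the route you sketch (center first, then expand) cannot produce it without this rearrangement; it is precisely the step a referee would flag.
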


In the second lemma, we estimate sums of powers of multinomial probabilities.
This is used in the proof of Theorem~\ref{thm:bias.var.density} and the proof of Theorem~\ref{thm:MISE.optimal.density}.

\begin{lemma}\label{lem:technical.sums}
    For every $\bb{x}\in \mathrm{Int}(\mathcal{S})$, we have, as $r\to \infty$,
    \begin{align}
        &r^{\hspace{0.3mm}d/2} \sum_{\bb{k}\in \N_0^d \cap r\mathcal{S}} P_{\bb{k},r}^{\hspace{0.2mm}2}(\bb{x}) = \left[(4\pi)^d (1 - \|\bb{x}\|_1) \prod_{i=1}^d x_i\right]^{-1/2} + \oo_{\bb{x}}(1), \label{eq:lem:technical.sums.eq.1} \\
        &r^{\hspace{0.3mm}d} \sum_{\bb{k}\in \N_0^d \cap r\mathcal{S}} P_{\bb{k},r}^{\hspace{0.2mm}3}(\bb{x}) = \left[(2\sqrt{3} \hspace{0.2mm}\pi)^d (1 - \|\bb{x}\|_1) \prod_{i=1}^d x_i\right]^{-1} + \oo_{\bb{x}}(1). \label{eq:lem:technical.sums.eq.2}
    \end{align}
\end{lemma}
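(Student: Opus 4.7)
The plan is to apply a multivariate local central limit theorem for the multinomial distribution, replacing each $P_{\bb{k},r}(\bb{x})$ in the bulk by a Gaussian density, and then convert the resulting Riemann sum into a Gaussian integral. Fix $\bb{x}\in\mathrm{Int}(\mathcal{S})$ and write $\Sigma(\bb{x}) = \mathrm{diag}(\bb{x}) - \bb{x}\bb{x}^\top$ for the multinomial covariance. Factoring $\Sigma(\bb{x}) = \mathrm{diag}(\bb{x})(I - \mathbf{1}\bb{x}^\top)$ and using $\det(I - \bb{u}\bb{v}^\top) = 1 - \bb{v}^\top\bb{u}$ gives $\det\Sigma(\bb{x}) = x_1 x_2\cdots x_d (1-\|\bb{x}\|_1)$, which is exactly the product appearing inside the claimed constants.

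\textbf{Bulk/tail decomposition.} I would first partition $\N_0^d\cap r\mathcal{S}$ into a bulk $B_r = \{\bb{k}: \|\bb{k}/r - \bb{x}\|_\infty \leq r^{-1/2+\varepsilon}\}$ for some small $\varepsilon \in (0,1/6)$ and its complement. By Bernstein's inequality (Lemma \ref{lem:Bernstein.inequality}), exactly as in \eqref{eq:prop:uniform.strong.consistency.end.next.2}, the multinomial mass of the complement is at most $C\,e^{-c r^{2\varepsilon}}$; since $P_{\bb{k},r}(\bb{x})\leq 1$, this gives $\sum_{\bb{k}\notin B_r} P^j_{\bb{k},r}(\bb{x}) = O(e^{-c r^{2\varepsilon}})$ for $j\in\{2,3\}$, which dominates any polynomial in $r^{-1}$ and is thus negligible compared with the target rates $r^{-d/2}$ and $r^{-d}$.

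\textbf{Local CLT on the bulk.} Stirling's formula applied to $r!$ and to each of the factorials in $P_{\bb{k},r}(\bb{x})$, together with a Taylor expansion of $\log(1+t)$ at $t = (k_i - rx_i)/(rx_i)$ in the exponent, yields uniformly for $\bb{k}\in B_r$
\begin{equation*}
    P_{\bb{k},r}(\bb{x}) = \frac{1 + o_{\bb{x}}(1)}{(2\pi r)^{d/2}\sqrt{\det\Sigma(\bb{x})}} \exp\!\Bigl(-\tfrac{r}{2}(\bb{k}/r - \bb{x})^\top \Sigma(\bb{x})^{-1}(\bb{k}/r - \bb{x})\Bigr).
\end{equation*}
Substituting $\bb{z} = \sqrt{r}\,(\bb{k}/r - \bb{x})$ turns the bulk into a scaled lattice of spacing $r^{-1/2}$ in $\bb{z}$, and raising to the $j$-th power gives
\begin{equation*}
    \sum_{\bb{k}\in B_r} P^j_{\bb{k},r}(\bb{x}) = \frac{1+o_{\bb{x}}(1)}{(2\pi r)^{jd/2}(\det\Sigma(\bb{x}))^{j/2}} \sum_{\bb{z}} \exp\!\bigl(-\tfrac{j}{2}\bb{z}^\top\Sigma(\bb{x})^{-1}\bb{z}\bigr).
\end{equation*}
The Riemann sum on the right, multiplied by the lattice volume $r^{-d/2}$, converges to $\int_{\R^d} \exp(-\tfrac{j}{2}\bb{z}^\top\Sigma^{-1}\bb{z})\,d\bb{z} = (2\pi/j)^{d/2}\sqrt{\det\Sigma(\bb{x})}$. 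Collecting the prefactors and the extra $r^{d/2}$ yields
\begin{equation*}
    \sum_{\bb{k}} P^j_{\bb{k},r}(\bb{x}) = \frac{1+o_{\bb{x}}(1)}{j^{d/2}\,(2\pi r)^{(j-1)d/2}\,(\det\Sigma(\bb{x}))^{(j-1)/2}}.
\end{equation*}
Specialising to $j=2$ yields $r^{d/2}\sum P^2_{\bb{k},r}(\bb{x}) \to [(4\pi)^d \det\Sigma(\bb{x})]^{-1/2}$, and to $j=3$ yields $r^d \sum P^3_{\bb{k},r}(\bb{x}) \to [(2\sqrt{3}\pi)^d \det\Sigma(\bb{x})]^{-1}$, matching the two claimed formulas.

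\textbf{Main obstacle.} The delicate step is making the $(1+o_{\bb{x}}(1))$ factor in the local CLT precise: Stirling contributes corrections of order $r^{-1}$ per factorial, and the Taylor expansion in the exponent leaves cubic-in-$(\bb{k}/r - \bb{x})$ remainders of order $r\cdot r^{-3/2+3\varepsilon} = r^{-1/2+3\varepsilon}$; both must be absorbed as $o(1)$, while the implicit constants degenerate as $\bb{x}\to\partial\mathcal{S}$ (because $\det\Sigma(\bb{x})\to 0$), which forces the error to depend on $\bb{x}$. Any $\varepsilon\in(0,1/6)$ makes these cubic remainders vanish and simultaneously keeps the tail bound $e^{-c r^{2\varepsilon}}$ much smaller than the target rates, so the two regimes can be consistently glued together.
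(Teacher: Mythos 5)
Your proposal is correct and follows the same overall strategy as the paper: both reduce the lattice sum to a Gaussian integral via a local limit theorem for the multinomial, using $\det\Sigma(\bb{x}) = x_1\cdots x_d(1-\|\bb{x}\|_1)$ and the scaling identity $\int \phi_\Sigma^j = (2\pi)^{-(j-1)d/2}j^{-d/2}(\det\Sigma)^{-(j-1)/2}$. The paper, however, invokes the local limit theorem as a black box (citing Lemma 2 of \cite{MR0478288} and Theorem 2.1 of \cite{arXiv:2001.08512}), writing directly $r^{d/2}\sum_{\bb{k}}P_{\bb{k},r}^2(\bb{x}) = \int_{\R^d}\phi_{\Sigma_{\bb{x}}}^2(\bb{y})\,\rd\bb{y} + \oo_{\bb{x}}(1)$ and the analogous identity for cubes, then evaluates the Gaussian integrals in two lines. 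You instead re-derive the local CLT from Stirling's formula with an explicit bulk/tail split (Bernstein tail bound outside $\|\bb{k}/r-\bb{x}\|_\infty \le r^{-1/2+\varepsilon}$, cubic Taylor remainder of size $r^{-1/2+3\varepsilon}$ inside) and then sum the Riemann approximation. Your version is self-contained and makes the uniformity-on-the-bulk and $\bb{x}$-dependence of the errors explicit, at the cost of length; the paper's version is shorter but leans on the cited multinomial local limit theorem for exactly the step you spell out. Both arrive at the same constants for $j=2,3$.
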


\begin{proof}[\bf Proof of Lemma~\ref{lem:technical.sums}]
    It is well known that the covariance matrix of the multinomial distribution is $r \, \Sigma_{\bb{x}}$, where $\Sigma_{\bb{x}} = \text{diag}(\bb{x}) - \bb{x} \bb{x}^{\top}$,
    see, e.g., \cite[p.377]{MR2168237}, and it is also known that
    \begin{equation}
        \det(\Sigma_{\bb{x}}) = (1 - \|\bb{x}\|_1) \prod_{i=1}^d x_i,
    \end{equation}
    see, e.g., \cite[Theorem~1]{MR1157720}.
    Therefore, consider
    \begin{equation}\label{eq:phi.M}
        \phi_{\Sigma_{\bb{x}}}(\bb{y}) \leqdef \frac{1}{\sqrt{(2\pi)^{\hspace{0.2mm}d} \det(\Sigma_{\bb{x}})}} \, \exp\left(-\frac{1}{2} \bb{y}^{\top} \Sigma_{\bb{x}}^{-1} \, \bb{y}\right), \quad \bb{y}\in \R^d,
    \end{equation}
    the density of the multivariate normal $\mathcal{N}(\bb{0},\Sigma_{\bb{x}})$.
    By a local limit theorem for the multinomial distribution (see, e.g., Lemma~2 in \cite{MR0478288} or Theorem~2.1 in \cite{MR4249129}), we have
    \begin{align}
        r^{\hspace{0.3mm}d/2} \sum_{\bb{k}\in \N_0^d \cap r\mathcal{S}} P_{\bb{k},r}^{\hspace{0.2mm}2}(\bb{x})
        = \int_{\R^d} \phi_{\Sigma_{\bb{x}}}^2(\bb{y}) \rd \bb{y} + \oo_{\bb{x}}(1)
        = \frac{2^{-d/2}}{\sqrt{(2\pi)^{\hspace{0.2mm}d} \det(\Sigma_{\bb{x}})}} \int_{\R^d} \phi_{\frac{1}{2} \Sigma_{\bb{x}}}(\bb{y}) \rd \bb{y} + \oo_{\bb{x}}(1)
        = \frac{2^{-d/2}}{\sqrt{(2\pi)^{\hspace{0.2mm}d} \det(\Sigma_{\bb{x}})}} \cdot 1 + \oo_{\bb{x}}(1),
    \end{align}
    and
    \begin{align}
        r^{\hspace{0.3mm}d} \sum_{\bb{k}\in \N_0^d \cap r\mathcal{S}} P_{\bb{k},r}^{\hspace{0.2mm}3}(\bb{x})
        = \int_{\R^d} \phi_{\Sigma_{\bb{x}}}^3(\bb{y}) \rd \bb{y} + \oo_{\bb{x}}(1)
        = \frac{3^{-d/2}}{(2\pi)^{\hspace{0.2mm}d} \det(\Sigma_{\bb{x}})} \int_{\R^d} \phi_{\frac{1}{3} \Sigma_{\bb{x}}}(\bb{y}) \rd \bb{y} + \oo_{\bb{x}}(1)
        = \frac{3^{-d/2}}{(2\pi)^{\hspace{0.2mm}d} \det(\Sigma_{\bb{x}})} \cdot 1 + \oo_{\bb{x}}(1).
    \end{align}
    This ends the proof.
\end{proof}

In the third lemma, we estimate another technical sum, needed in the proof Theorem~\ref{thm:bias.var} and the proof of Theorem~\ref{thm:MISE.optimal}.

\begin{lemma}\label{lem:technical.sums.R}
    For $i\in \{1,\dots,d\}$ and $r\in \N$, let
    \begin{equation}\label{eq:lem:technical.sums.R}
        R_{i,r}(\bb{x}) \leqdef r^{1/2} \hspace{-1mm} \sum_{\bb{k},\bb{\ell}\in \N_0^d \cap r\mathcal{S}} \Big(\frac{k_i \wedge \ell_i}{r} - x_i\Big) P_{\bb{k},r}(\bb{x}) P_{\bb{\ell},r}(\bb{x}), \quad \bb{x}\in \mathcal{S}.
    \end{equation}
    Then,
    \begin{equation}\label{eq:lem:technical.sums.R.claim.1}
        \sup_{1 \leq i \leq d} \sup_{r\in \N} \, \sup_{\bb{x}\in \mathcal{S}} \, |R_{i,r}(\bb{x})| \leq 1,
    \end{equation}
    and for every $\bb{x}\in \mathrm{Int}(\mathcal{S})$, we have,
    \begin{equation}\label{eq:lem:technical.sums.R.claim.2}
        R_{i,r}(\bb{x}) = -\sqrt{\frac{x_i (1 - x_i)}{\pi}} + \oo_{\bb{x}}(1), \quad r\to \infty.
    \end{equation}
\end{lemma}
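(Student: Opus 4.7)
The approach is probabilistic: the double sum is an expectation over two independent multinomial vectors, and the factor $(k_i\wedge\ell_i)/r-x_i$ is a minimum, which we can rewrite via the elementary identity $\min(a,b)=\frac{1}{2}(a+b-|a-b|)$.

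\textbf{Step 1: Probabilistic reformulation.} Let $\bb{K}^{(1)},\bb{K}^{(2)}$ be i.i.d.\ with a $\text{Multinomial}\hspace{0.2mm}(r,(\bb{x},1-\|\bb{x}\|_1))$ distribution, so that $P_{\bb{k},r}(\bb{x})=\PP(\bb{K}^{(1)}=\bb{k})$ for $\bb{k}\in\N_0^d\cap r\mathcal{S}$. Then
\[
R_{i,r}(\bb{x})=r^{1/2}\,\EE\bigl[(K^{(1)}_i\wedge K^{(2)}_i)/r - x_i\bigr].
\]
Using $\min(a,b)=\tfrac12(a+b-|a-b|)$ together with the marginal fact $\EE[K^{(j)}_i]=rx_i$, this collapses to
\[
R_{i,r}(\bb{x}) = -\frac{1}{2\sqrt{r}}\,\EE\bigl[\,|K^{(1)}_i - K^{(2)}_i|\,\bigr].
\]

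\textbf{Step 2: Uniform bound \eqref{eq:lem:technical.sums.R.claim.1}.} The marginals $K^{(j)}_i$ are $\text{Binomial}\hspace{0.2mm}(r,x_i)$, so $\VV(K^{(1)}_i - K^{(2)}_i)=2rx_i(1-x_i)\leq r/2$. By Cauchy--Schwarz,
\[
\EE[\,|K^{(1)}_i - K^{(2)}_i|\,] \leq \sqrt{2rx_i(1-x_i)}\leq \sqrt{r/2},
\]
which gives $|R_{i,r}(\bb{x})|\leq 1/(2\sqrt{2})\leq 1$, uniformly in $i$, $r$ and $\bb{x}\in\mathcal{S}$.

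\textbf{Step 3: Asymptotics \eqref{eq:lem:technical.sums.R.claim.2}.} Fix $\bb{x}\in\mathrm{Int}(\mathcal{S})$, so $x_i\in(0,1)$. The variable $K^{(1)}_i-K^{(2)}_i$ is a sum of $2r$ independent, bounded, centered Bernoulli-type increments with total variance $2rx_i(1-x_i)$, hence by the CLT,
\[
\frac{K^{(1)}_i-K^{(2)}_i}{\sqrt{r}}\ \xrightarrow{\mathscr{D}}\ \mathcal{N}\bigl(0,\,2x_i(1-x_i)\bigr), \quad \text{as } r\to\infty.
\]
The second moments $\EE[(K^{(1)}_i-K^{(2)}_i)^2/r]=2x_i(1-x_i)$ are bounded in $r$, so $\{|K^{(1)}_i-K^{(2)}_i|/\sqrt{r}\}_{r\geq 1}$ is uniformly integrable and the first absolute moment converges:
\[
\EE\Bigl[\frac{|K^{(1)}_i-K^{(2)}_i|}{\sqrt{r}}\Bigr]\ \longrightarrow\ \sqrt{2x_i(1-x_i)}\cdot\sqrt{2/\pi}\ =\ 2\sqrt{x_i(1-x_i)/\pi},
\]
using the standard formula $\EE|Z|=\sigma\sqrt{2/\pi}$ for $Z\sim\mathcal{N}(0,\sigma^2)$. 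Plugging this into the identity from Step 1 yields $R_{i,r}(\bb{x}) = -\sqrt{x_i(1-x_i)/\pi} + \oo_{\bb{x}}(1)$.

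\textbf{Main obstacle.} The only nontrivial point is justifying the convergence of the first absolute moments (not merely convergence in distribution), but this is standard: uniform boundedness of the second moments implies uniform integrability. The key structural insight is the minimum-to-absolute-difference trick in Step 1, which reduces a double sum over lattice points to a one-line probabilistic computation; once that is in place, both claims follow from elementary binomial bounds and the classical CLT.
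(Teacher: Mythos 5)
Your proof is correct, and it takes a genuinely different route from the paper. Your key move is the elementary identity $\min(a,b)=\tfrac12(a+b-|a-b|)$, which, together with $\EE[K^{(j)}_i]=rx_i$, collapses the double sum to the one-dimensional quantity $-\tfrac{1}{2\sqrt r}\,\EE|K^{(1)}_i-K^{(2)}_i|$; the asymptotics then follow from the ordinary CLT plus uniform integrability (bounded second moments), and the uniform bound from Cauchy--Schwarz. The paper instead reduces to the binomial marginals directly, invokes a \emph{local} limit theorem to replace the binomial pmf by a Gaussian density, and evaluates the resulting double integral $2\int z\,\phi_{\sigma^2}(z)\int_z^\infty\phi_{\sigma^2}(y)\,\rd y\,\rd z$ by integration by parts. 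The trade-offs: your reformulation is cleaner and more elementary (no LLT, no integral manipulation), and as a bonus it yields the sharper uniform constant $1/(2\sqrt2)$ in place of the paper's $1$; the paper's LLT route, if carried out quantitatively, would deliver an explicit rate inside the $\oo_{\bb x}(1)$ term, which the CLT-plus-uniform-integrability argument does not give without additional effort — but since the lemma claims only $\oo_{\bb x}(1)$, both are equally adequate here. One small presentational point: in Step 3 you describe $K^{(1)}_i-K^{(2)}_i$ as a sum of $2r$ centered Bernoulli-type increments, but strictly it is a sum of $r$ i.i.d.\ increments $B^{(1)}_j-B^{(2)}_j$; either grouping makes the CLT applicable, so the conclusion is unaffected.
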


\begin{proof}[\bf Proof of Lemma~\ref{lem:technical.sums.R}]
    By the Cauchy-Schwarz inequality and the identity \eqref{eq:var.multinomial.identity}, we have
    \begin{align}
        |R_{i,r}(\bb{x})|
        \leq 2 r^{1/2} \hspace{-1mm} \sum_{\bb{k}\in \N_0^d \cap r\mathcal{S}} \Big|\frac{k_i}{r} - x_i\Big| P_{\bb{k},r}(\bb{x})
        \leq 2 r^{1/2} \hspace{-1mm} \sqrt{\sum_{\bb{k}\in \N_0^d \cap r\mathcal{S}} \Big|\frac{k_i}{r} - x_i\Big|^2 P_{\bb{k},r}(\bb{x})}
        \leq 2 r^{1/2} \sqrt{\frac{x_i (1 - x_i)}{r}} \leq 1.
    \end{align}
    For the second claim, we know that the marginal distributions of the multinomial are binomial, so if $\phi_{\sigma^2}$ denotes the density function of the $\mathcal{N}(0,\sigma^2)$ distribution, a standard local limit theorem for the binomial distribution (see, e.g., \citet{MR56861} or Theorem~2.1 in \cite{MR4249129}) and integration by parts show that
    \begin{align}\label{eq:thm:bias.var.end.2}
        R_{i,r}(\bb{x})
        &= 2 \cdot x_i (1 - x_i) \int_{-\infty}^{\infty} \frac{z}{x_i (1 - x_i)} \, \phi_{x_i(1 - x_i)}(z) \int_z^{\infty} \phi_{x_i(1 - x_i)}(y) \rd y \rd z + \oo_{\bb{x}}(1) \notag \\[1.5mm]
        &= 2 \cdot x_i (1 - x_i) \, \left[0 - \int_{-\infty}^{\infty} \phi_{x_i (1 - x_i)}^2(z) \rd z\right] + \oo_{\bb{x}}(1)
        = \frac{- 2 x_i (1 - x_i)}{\sqrt{2\pi \cdot 2 x_i(1 - x_i)}} \int_{-\infty}^{\infty} \phi_{\frac{1}{2} x_i (1 - x_i)}(z) \rd z + \oo_{\bb{x}}(1) \\
        &= -\sqrt{\frac{x_i (1 - x_i)}{\pi}} + \oo_{\bb{x}}(1).
    \end{align}
    This ends the proof.
\end{proof}

\begin{remark}\label{eq:correction.Leblanc.Belalia}
    The proof of \eqref{eq:lem:technical.sums.R.claim.2} is much simpler here than the proof of Lemma~2\hspace{0.5mm}(iv) in \cite{MR2960952} ($d = 1$), where a finely tuned continuity correction from \citet{MR538319} was used to estimate the survival function of the binomial distribution instead of working with a local limit theorem directly.
    There is also an error in Leblanc's paper (for an explanation, see Remark~3.4 in \cite{MR4213687}).
    His function $\psi_2(x)$ should be equal to
    \begin{equation}
        [x (1 - x) / (4\pi)]^{1/2} \quad \text{instead of} \quad [x (1 - x) / (2\pi)]^{1/2}.
    \end{equation}
    As a consequence, his function $V(x)$ should be equal to
    \begin{equation}
        f(x) \, [x (1 - x)/\pi]^{1/2} \quad \text{instead of} \quad f(x) \, [2x (1 - x)/\pi]^{1/2}.
    \end{equation}
    As pointed out in Remark~\ref{rem:error.Leblanc}, this error has spread to at least 15 papers/theses who relied on Lemma~2\hspace{0.3mm}(iv) in \cite{MR2960952}; the list appears with suggested corrections in Appendix~B of \cite{MR4213687}.
\end{remark}

In the fourth lemma, we prove the integral version of \eqref{eq:lem:technical.sums.eq.1}.
This is needed in the proof of Theorem~\ref{thm:MISE.optimal.density}.

\begin{lemma}[\citet{MR3825458}]\label{lem:lemma.4.Leblanc.2006.tech.report}
    We have, as $r\to \infty$,
    \begin{equation}
        r^{\hspace{0.3mm}d/2} \int_{\mathcal{S}} \sum_{\bb{k}\in \N_0^d \cap r\mathcal{S}} P_{\bb{k},r}^{\hspace{0.2mm}2}(\bb{x}) \rd \bb{x} = \frac{2^{-d} \sqrt{\pi}}{\Gamma(d/2 + 1/2)} + O(r^{-1}) = \int_{\mathcal{S}} \psi(\bb{x}) \rd \bb{x} + \OO(r^{-1}),
    \end{equation}
    where recall $\psi(\bb{x}) \leqdef \big[(4\pi)^d (1 - \|\bb{x}\|_1) \prod_{i=1}^d x_i\big]^{-1/2}$.
\end{lemma}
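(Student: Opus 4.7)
The plan is to evaluate the left-hand side in closed form, then read off both the leading constant and the $\OO(r^{-1})$ error via a standard gamma-function asymptotic; the second equality will be an independent direct computation using the generalized Dirichlet integral.

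First, by squaring the multinomial weight and applying identity \eqref{eq:dirichlet.identity} with $a_i = 2k_i$ and $b = 2(r - \|\bb{k}\|_1)$, one gets
\begin{equation*}
    \int_{\mathcal{S}} P_{\bb{k},r}^2(\bb{x}) \rd \bb{x}
    = \frac{(r!)^2}{(2r+d)!} \binom{2(r-\|\bb{k}\|_1)}{r-\|\bb{k}\|_1} \prod_{i=1}^d \binom{2 k_i}{k_i},
\end{equation*}
after using $(2n)!/(n!)^2 = \binom{2n}{n}$. Summing over $\bb{k}\in \N_0^d \cap r\mathcal{S}$ is a $(d+1)$-fold convolution of the central binomial coefficients; since $\sum_{n\geq 0} \binom{2n}{n} x^n = (1-4x)^{-1/2}$, the coefficient of $x^r$ in $(1-4x)^{-(d+1)/2}$ gives
\begin{equation*}
    \sum_{\bb{k}\in \N_0^d \cap r\mathcal{S}} \binom{2(r-\|\bb{k}\|_1)}{r-\|\bb{k}\|_1} \prod_{i=1}^d \binom{2 k_i}{k_i}
    = \frac{4^r \, \Gamma(r+(d+1)/2)}{r! \, \Gamma((d+1)/2)}.
\end{equation*}

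Next, substitute and apply the Legendre duplication formula $\Gamma(2z) = 2^{2z-1}\pi^{-1/2}\Gamma(z)\Gamma(z+1/2)$ with $2z = 2r+d+1$, which yields
\begin{equation*}
    (2r+d)! = \frac{2^{2r+d}}{\sqrt{\pi}} \, \Gamma(r+(d+1)/2) \, \Gamma(r+d/2+1).
\end{equation*}
After cancellation the left-hand side simplifies dramatically to
\begin{equation*}
    \int_{\mathcal{S}} \sum_{\bb{k}\in \N_0^d \cap r\mathcal{S}} P_{\bb{k},r}^2(\bb{x}) \rd \bb{x}
    = \frac{\sqrt{\pi}}{2^d \, \Gamma((d+1)/2)} \cdot \frac{\Gamma(r+1)}{\Gamma(r+d/2+1)}.
\end{equation*}
The ratio of gammas admits the classical expansion $\Gamma(r+1)/\Gamma(r+d/2+1) = r^{-d/2}\bigl(1 + \OO(r^{-1})\bigr)$, which after multiplication by $r^{d/2}$ gives the first equality with the stated error term.

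For the second equality I would compute $\int_{\mathcal{S}} \psi(\bb{x})\rd \bb{x}$ directly. Factoring the constant and using the generalized form of \eqref{eq:dirichlet.identity} with half-integer exponents $a_i = -1/2$, $b = -1/2$ (valid because all exponents exceed $-1$, so the integral converges and equals the corresponding ratio of gammas), one obtains
\begin{equation*}
    \int_{\mathcal{S}} (1-\|\bb{x}\|_1)^{-1/2} \prod_{i=1}^d x_i^{-1/2} \rd \bb{x}
    = \frac{\Gamma(1/2)^{d+1}}{\Gamma((d+1)/2)} = \frac{\pi^{(d+1)/2}}{\Gamma((d+1)/2)},
\end{equation*}
and multiplication by $(4\pi)^{-d/2}$ collapses to $2^{-d}\sqrt{\pi}/\Gamma((d+1)/2)$, matching the first equality.

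The whole argument is a string of identity manipulations rather than analysis, so there is no serious obstacle; the one slightly delicate point is recognizing the $(d+1)$-fold central-binomial convolution as the coefficient of $(1-4x)^{-(d+1)/2}$ and pairing it with the duplication formula so that the factors $4^r$ and $2^{2r+d}$ cancel cleanly. Once that telescoping is spotted, the rest is bookkeeping plus a one-line Stirling estimate.
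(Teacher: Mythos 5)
Your proof is correct and reaches the same closed form $\int_{\mathcal{S}} \sum_{\bb{k}} P_{\bb{k},r}^2(\bb{x})\,\rd\bb{x} = \frac{\sqrt{\pi}\,\Gamma(r+1)}{2^d\,\Gamma((d+1)/2)\,\Gamma(r+d/2+1)}$ as the paper (note $\Gamma((d+1)/2) = \Gamma(d/2+1/2)$, so the constants agree). The first and last steps coincide with the paper's: expand $P_{\bb{k},r}^2$ via the Dirichlet integral \eqref{eq:dirichlet.identity}, and compute $\int_{\mathcal{S}} \psi$ via the Dirichlet normalization constant.

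The genuinely different step is the evaluation of the $(d+1)$-fold convolution $\sum_{\bb{k}} \prod_{i=1}^{d+1}\binom{2k_i}{k_i}$. The paper rewrites $\binom{2k}{k} = (-4)^k\binom{-1/2}{k}$, applies the multi-factor Chu--Vandermonde convolution $\sum \prod \binom{-1/2}{k_i} = \binom{-(d+1)/2}{r}$, and then converts back; you instead observe that the sum is the coefficient of $x^r$ in $\bigl((1-4x)^{-1/2}\bigr)^{d+1} = (1-4x)^{-(d+1)/2}$ and read off the generalized binomial expansion. These are two phrasings of the same combinatorial fact (the generating-function identity \emph{is} Chu--Vandermonde in disguise), so the substance is identical, but yours is more self-contained and avoids the three separate citations to Graham--Knuth--Patashnik. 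A second, smaller divergence: for the final asymptotic, the paper splits into even and odd $d$ and treats each case by hand, whereas you invoke the standard ratio expansion $\Gamma(r+1)/\Gamma(r+d/2+1) = r^{-d/2}\bigl(1+\OO(r^{-1})\bigr)$ directly, which is cleaner and uniform in $d$. Both approaches correctly yield the $\OO(r^{-1})$ error term. No gaps.
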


\begin{proof}[\bf Proof of Lemma~\ref{lem:lemma.4.Leblanc.2006.tech.report}]
    Throughout the proof, let $k_{d+1} \leqdef r - \|\bb{k}\|_1$.
    We have
    \begin{align}\label{eq:prop:lemma.4.Leblanc.2006.tech.report.beginning}
        \sum_{\bb{k}\in \N_0^d \cap r \mathcal{S}} \int_{\mathcal{S}} (P_{\bb{k},r}(\bb{x}))^2 \rd \bb{x}
        &= \sum_{\bb{k}\in \N_0^d \cap r \mathcal{S}} \left(\frac{\Gamma(r + 1)}{\prod_{i=1}^{d+1} \Gamma(k_i + 1)}\right)^2 \int_{\mathcal{S}} \, \prod_{i=1}^{d+1} x_i^{2k_i} \rd \bb{x}
        = \sum_{\bb{k}\in \N_0^d \cap r \mathcal{S}} \left(\frac{\Gamma(r + 1)}{\prod_{i=1}^{d+1} \Gamma(k_i + 1)}\right)^2 \frac{\prod_{i=1}^{d+1} \Gamma(2k_i + 1)}{\Gamma(2r + d + 1)} \notag \\
        &= \frac{(\Gamma(r + 1))^2}{\Gamma(2r + d + 1)} \sum_{\bb{k}\in \N_0^d \cap r \mathcal{S}} \prod_{i=1}^{d+1} \binom{2 k_i}{k_i}.
    \end{align}
    To obtain the third equality, we used the normalization constant for the Dirichlet distribution.
    Now, note that
    \begin{equation}\label{eq:prop:lemma.4.Leblanc.2006.tech.report.Graham.formula}
        \sum_{\bb{k}\in \N_0^d \cap r \mathcal{S}} \prod_{i=1}^{d+1} \binom{2 k_i}{k_i}
        = (-4)^r \sum_{\bb{k}\in \N_0^d \cap r \mathcal{S}} \prod_{i=1}^{d+1} \frac{1}{(-4)^{k_i}} \binom{2 k_i}{k_i}
        = (-4)^r \sum_{\bb{k}\in \N_0^d \cap r \mathcal{S}} \prod_{i=1}^{d+1} \binom{-1/2}{k_i} = (-4)^r \, \binom{-(d+1)/2}{r} = \binom{r + \frac{d-1}{2}}{r} \, 4^r,
    \end{equation}
    where the last three equalities follow, respectively, from (5.37), the Chu-Vandermonde convolution formula (p.\hspace{-1mm} 248), and (5.14) in \cite{MR1397498}.
    By applying \eqref{eq:prop:lemma.4.Leblanc.2006.tech.report.Graham.formula} and the duplication formula
    \begin{equation}\label{eq:prop:lemma.4.Leblanc.2006.tech.report.duplication.formula}
        4^{-y} = \frac{\Gamma(y) \Gamma(y + 1/2)}{2 \sqrt{\pi} \, \Gamma(2y)}, \quad y\in (0,\infty),
    \end{equation}
    see \cite[p.256]{MR0167642}, in \eqref{eq:prop:lemma.4.Leblanc.2006.tech.report.beginning}, we get
    \begin{align*}
        \int_{\mathcal{S}} \sum_{\bb{k}\in \N_0^d \cap r \mathcal{S}} (P_{\bb{k},r}(\bb{x}))^2 \rd \bb{x}
        &= \frac{(\Gamma(r + 1))^2}{\Gamma(2r + d + 1)} \cdot \frac{\Gamma(r + d/2 + 1/2)}{\Gamma(r + 1)\Gamma(d/2 + 1/2)} \cdot 4^r \\[-0.5mm]
        &= \frac{2 \sqrt{\pi} \, \Gamma(r + 1)}{\Gamma(d/2 + 1/2) \Gamma(r + d/2 + 1)} \cdot \frac{\Gamma(r + d/2 + 1/2) \Gamma(r + d/2 + 1)}{2 \sqrt{\pi} \, \Gamma(2r + d + 1)} \cdot 4^r \\[2mm]
        &= \frac{2 \sqrt{\pi} \, \Gamma(r + 1)}{\Gamma(d/2 + 1/2) \Gamma(r + d/2 + 1)} \cdot \frac{4^r}{4^{r + d/2 + 1/2}}
        = \frac{2^{-d} \sqrt{\pi} \, \Gamma(r + 1)}{\Gamma(d/2 + 1/2) \Gamma(r + d/2 + 1)} \\[2mm]
        &= \left\{\hspace{-1mm}
        \begin{array}{ll}
            \frac{2^{-d} \sqrt{\pi}}{\Gamma(d/2 + 1/2)} \prod_{i=1}^{d/2} (r + i)^{-1}, &\mbox{if } d ~\text{is even}, \\[2mm]
            \frac{2^{-d} \sqrt{\pi}}{\Gamma(d/2 + 1/2)} \prod_{i=1}^{d/2+1/2} (r + d/2 + 1 - i)^{-1} \cdot \frac{\Gamma(r + 1)}{\Gamma(r + 1/2)}, &\mbox{if } d ~\text{is odd}. \\
        \end{array}
        \right.
    \end{align*}
    Using the fact that
    \begin{equation}
        \frac{\Gamma(r + 1)}{r^{1/2} \Gamma(r + 1/2)} = 1 + \frac{1}{8r} + O(r^{-2}),
    \end{equation}
    see \cite[p.257]{MR0167642}, we obtain
    \begin{equation}\label{prop:lemma.4.Leblanc.2006.tech.report.S.m.asymptotic}
        r^{\hspace{0.3mm}d/2} \int_{\mathcal{S}} \sum_{\bb{k}\in \N_0^d \cap r \mathcal{S}} (P_{\bb{k},r}(\bb{x}))^2 \rd \bb{x} = \frac{2^{-d} \sqrt{\pi}}{\Gamma(d/2 + 1/2)} + O(r^{-1}).
    \end{equation}
    On the other hand,
    \begin{align}
        \int_{\mathcal{S}} \Big[(4\pi)^d (1 - \|\bb{x}\|_1) \prod_{i=1}^d x_i\Big]^{-1/2} \rd \bb{x}
        &= \frac{1}{2^d \pi^{d/2}} \int_{\mathcal{S}} \prod_{i=1}^{d+1} x_i^{1/2 - 1} \rd \bb{x}
        = \frac{1}{2^d \pi^{d/2}} \cdot \frac{(\Gamma(1/2))^{d+1}}{\Gamma(d/2 + 1/2)}
        = \frac{2^{-d} \sqrt{\pi}}{\Gamma(d/2 + 1/2)}.
    \end{align}
    Together with \eqref{prop:lemma.4.Leblanc.2006.tech.report.S.m.asymptotic}, this ends the proof.
\end{proof}

\section*{Acknowledgments}

The author is supported by a postdoctoral fellowship from the NSERC (PDF) and the FRQNT (B3X supplement).
We thank the Editor, Associate Editor and referees, as well as our financial sponsors.

%
%


\bibliographystyle{myjmva}
\bibliography{Ouimet_2020_review_Bernstein_and_asymmetric_kernels_bib}

\end{document}